    \Crefname{prop}{Proposition}{Propositions}
\newtheorem{thmx}{Theorem}
\newtheorem{conjx}[thmx]{Conjecture}
\theoremstyle{plain}
\newtheorem{thm}{Theorem}[section]
\newtheorem*{thm*}{Theorem}
\newtheorem{cor}[thm]{Corollary}
\newtheorem*{cor*}{Corollary}
\newtheorem{prop}[thm]{Proposition}
\newtheorem*{prop*}{Proposition}
\newtheorem{lem}[thm]{Lemma}
\newtheorem*{lem*}{Lemma}
\newtheorem{q}[thm]{Question}
\newtheorem*{q*}{Question}
\newtheorem{conj}[thm]{Conjecture}
\newtheorem*{conj*}{Conjecture}
\theoremstyle{definition}
\newtheorem{defn}[thm]{Definition}
\newtheorem*{defn*}{Definition}
\newtheorem*{ex*}{Example}
\newtheorem{rem}[thm]{Remark}
\newtheorem*{rem*}{Remark}
\theoremstyle{plain}
    \newtheoremstyle{TheoremNum}
        {\topsep}{\topsep} 
        {\itshape} 
        {-0.2cm} 
        {\bfseries} 
        {.} 
        { }  
        {\thmname{#1}\thmnote{ \bfseries #3}}
    \theoremstyle{TheoremNum}
\Crefname{defn}{Definition}{Definitions}
\newcommand{\mc}{\mathcal}
\newcommand{\cala}{\mc{A}}
\newcommand{\GL}{\mathrm{GL}}
\newcommand{\F}{\mathsf{F}}
\newcommand{\FP}{\mathsf{FP}}
\newcommand{\gd}{\mathrm{gd}}
\newcommand{\btwo}[1]{b^{(2)}_{#1}}
\newcommand{\DF}[1]{\mathcal{D}_{\mathbb{F}{#1}}}
\newcommand{\C}{\mathbb{C}}
\newcommand{\N}{\mathbb{N}}
\newcommand{\Q}{\mathbb{Q}}
\newcommand{\R}{\mathbb{R}}
\newcommand{\Z}{\mathbb{Z}}
\newcommand{\FF}{\mathbb{F}}
\newcommand{\inv}{^{-1}}
\DeclareMathOperator{\Flag}{Flag}
\DeclareMathOperator{\lk}{lk}
\DeclareMathOperator{\st}{st}
\DeclareMathOperator{\rk}{rk}
\newcommand{\Mod}{\mathsf{Mod}}
\newcounter{comments}
\title[Homological growth of Artin kernels]{Homological growth of Artin kernels in positive characteristic}
\author{Sam P.~Fisher}
\email{sam.fisher@maths.ox.ac.uk}
\author{Sam Hughes}
\email{sam.hughes@maths.ox.ac.uk}
\address[S.~P.~Fisher and S.~Hughes]{Mathematical Institute, 
Andrew Wiles Building, 
Observatory Quarter, 
University of Oxford, 
Oxford,
OX2 6GG,
United Kingdom}
\author{Ian J.~Leary}
\email{i.j.leary@soton.ac.uk}
\address[I.~J.~Leary]{School of Mathematical Sciences,
University of Southampton,
Southampton,
SO17 1BJ,
United Kingdom}
\date{\today}
\subjclass[2020]{Primary 20J05; Secondary 16K99, 16S35, 20E26, 20F36, 20F65, 57M07}
\begin{document}

\begin{abstract}
    We prove an analogue of the L\"uck Approximation Theorem in positive characteristic for certain residually finite rationally soluble (RFRS) groups including right-angled Artin groups and Bestvina--Brady groups. Specifically, we prove that the mod $p$ homology growth equals the dimension of the group homology with coefficients in a certain universal division ring and this is independent of the choice of residual chain.  For general RFRS groups we obtain an inequality between the invariants.  We also consider a number of applications to fibring, amenable category, and minimal volume entropy.
\end{abstract}

\maketitle

\section{Introduction}
A celebrated theorem of L\"uck relates the rational homology growth in degree $m$ through finite covers to the $m$th $\ell^2$-Betti number of a residually finite group. More precisely:

\begin{thm}[L\"uck, \cite{LuckApprox}] \label{thm:luck}
    Let $G$ be a residually finite group of type $\mathsf F_{m+1}$ and let $(G_n)_{n\in\N}$ be a residual chain of finite index normal subgroups. Then
    \[
        \lim_{n \to \infty} \frac{b_m(G_n; \Q)}{[G:G_n]} =  b_m^{(2)}(G),
    \]
    where $b_m^{(2)}(G)$ denotes the $m$th $\ell^2$-Betti number of $G$.
\end{thm}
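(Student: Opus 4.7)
The plan is to follow L\"uck's original strategy: reduce the statement to a claim about matrices over $\Z G$, translate both sides into the mass at $0$ of a spectral measure, and then control that mass via an integrality argument.

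I would first use the type $\F_{m+1}$ hypothesis to pick a $K(G,1)$ with finite $(m+1)$-skeleton and pass to the cellular chain complex $(C_\bullet,d_\bullet)$ of its universal cover. The modules $C_k$ with $k \le m+1$ are finitely generated free $\Z G$-modules and the $d_k$ are recorded by matrices over $\Z G$; base change along $\Z G \to \Q[G/G_n]$ computes $H_\ast(G_n;\Q)$, while base change along $\Z G \to \mc{N}(G)$ computes $\btwo{\ast}(G)$. Additivity of the respective dimensions in short exact sequences reduces the theorem to showing, for every matrix $A \in \Mat_{r\times s}(\Z G)$,
\[
    \lim_{n\to\infty} \frac{\dim_\Q \ker(A_n)}{[G:G_n]} \;=\; \dim_{\mc{N}(G)} \ker(A),
\]
where $A_n$ denotes the image of $A$ in $\Mat_{r\times s}(\Q[G/G_n])$.

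Next I would replace $A$ by the positive self-adjoint $B := A^\ast A$ (which has the same kernel) and introduce the spectral measures $\mu$ of $B$ with respect to the canonical trace on $\mc{N}(G)$ and $\mu_n$ of $B_n$ with respect to the normalised trace on $\Mat_s(\Q[G/G_n])$; both are probability measures on $[0,\|B\|]$, and the quantities of interest are exactly $\mu(\{0\})$ and $\mu_n(\{0\})$. Since each $G_n$ is normal of finite index, the normalised trace of any $g \in G \setminus \{1\}$ on $\Q[G/G_n]$ vanishes as soon as $g \notin G_n$, so residual finiteness promotes this to convergence of the normalised trace on every fixed element of $\Z G$ to the von Neumann trace. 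Hence the moments of $\mu_n$ converge to those of $\mu$, and compact support upgrades this to weak convergence $\mu_n \Rightarrow \mu$.

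The main obstacle is controlling the atom at $0$. Weak convergence plus portmanteau for closed sets only gives $\limsup_n \mu_n(\{0\}) \le \mu(\{0\})$; the reverse inequality is delicate because a cluster of small nonzero eigenvalues of $B_n$ could in principle accumulate at $0$ in the limit and be counted by $\mu(\{0\})$. The key input is a determinantal estimate: after clearing denominators the characteristic polynomial of $B_n$ has integer coefficients, so the product of its nonzero eigenvalues is a nonzero integer, of absolute value at least $1$. Taking logarithms yields a uniform lower bound $\int_{(0,\|B\|]} \log\lambda \, d\mu_n \ge -C$, and combining with $\lambda \le \|B\|$ produces a uniform Chebyshev-type estimate $\mu_n((0,\varepsilon)) \le C'/\log(1/\varepsilon)$. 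Pairing this with the open-set portmanteau bound $\liminf_n \mu_n([0,\varepsilon)) \ge \mu([0,\varepsilon))$ and letting $\varepsilon \to 0$ recovers $\liminf_n \mu_n(\{0\}) \ge \mu(\{0\})$. This determinantal/Fuglede--Kadison-type bound is the only genuinely non-formal step; the rest of the argument is routine.
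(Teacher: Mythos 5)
The paper cites \cref{thm:luck} from L\"uck's original article \cite{LuckApprox} without reproducing a proof, so there is no internal argument to compare against. Your proposal, however, is a faithful and correct reconstruction of L\"uck's original argument: the reduction via the finite $(m+1)$-skeleton to the kernel-dimension statement for a single matrix, passage to $B = A^\ast A$, identification of both sides with the mass at $0$ of a spectral measure, weak convergence via the convergence of normalised traces (which indeed uses both normality and the residual chain condition exactly as you describe), and the integrality/log-determinant bound that upgrades portmanteau's $\limsup$ inequality to a genuine limit. One minor point: since $B$ already lies in $\Mat_s(\Z G)$, the characteristic polynomial of $B_n$ has integer coefficients without any denominator-clearing, so the lower bound $\int_{(0,\|B\|]} \log\lambda\, d\mu_n \ge 0$ is available directly; but your weaker $\ge -C$ suffices. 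The sketch is sound.
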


An immediate consequence of L\"uck's approximation theorem is that the left-hand limit always exists and is independent of the chosen residual chain.  We remind the reader that a \emph{residual chain} is a sequence $G=G_0\geq G_1\geq \dots \geq G_n\geq\dots$ such that each $G_n$ is a finite index normal subgroup of $G$ and $\bigcap_{n\in\N} G_n=\{1\}$.  A related invariant is the \emph{$m$th $\FF_p$-homology gradient} for a finite field $\FF_p$. It is defined by 
\[
\btwo{m}(G, (G_n);\FF_p)\coloneqq \limsup_{n\to\infty}\frac{b_m(G_n;\FF_p)}{[G:G_n]} 
\]
for any $G$ of type $\mathsf{F}_{m+1}$ and residual chain $(G_n)_{n\in\N}$.  We will recall the definition of various finiteness properties in \Cref{sec:fibring}.

In \cite[Conjecture~3.4]{Luck2016survey} L\"uck conjectured that $\btwo{m}(G;\FF_p)$ should equal $\btwo{m}(G)$ (and hence be independent of the residual chain).  This was disproved by Avramidi--Okun--Schreve \cite{AvramidiOkunSchreve2021} (using a result of Davis--Leary \cite{DavisLeary2003}) where they showed that 
\[\btwo{3}(A_{\mathbb{R}\mathbf{P}^2})=0 \quad \text{but} \quad \btwo{3}(A_{\mathbb{R}\mathbf{P}^2};\FF_2)=1 \]
independently of the choice of residual chain.  Here, $A_{\mathbb{R}\mathbf{P}^2}$ is the right-angled Artin group (RAAG) on (the $1$-skeleton of) any flag triangulation of the real projective plane.

For torsion-free groups satisfying the Atiyah conjecture, the $\ell^2$-Betti numbers may be computed via the dimensions of group homology with coefficients in a certain skew field $\mc D_{\Q G}$, known as the \emph{Linnell skew field} of $G$.  In \cite{JaikinZapirain2020THEUO}, Jaikin-Zapirain introduces analogues of the Linnell skew field with ground ring any skew field $\FF$, denoted $\mc D_{\FF G}$, and called the \textit{Hughes-free division ring} of $\mathbb F G$. He proves that $\mathcal D_{\mathbb F G}$ exists and is unique up to $\FF G$-isomorphism for large classes of groups, including residually finite rationally soluble (RFRS) groups (in particular compact special groups) and conjectures they should exist for all locally indicable groups.

One may compute group homology with coefficients in $\mc D_{\FF G}$ and take $\mc D_{\FF G}$-dimensions to obtain $\mc D_{\FF G}$-Betti numbers, denoted $b_m^{\mc D_{\FF G}}(G)$. We emphasise that when $G$ is RFRS and $\mathbb F = \Q$, the Linnell skew field and the Hughes free division ring coincide \cite[Appendix]{JaikinZapirain2020THEUO}, and therefore that $b_m^{\mathcal D_{\Q G}}(G) = b_m^{(2)}(G)$. The $\DF{G}$-Betti numbers share a number of properties with $\ell^2$-Betti numbers (see for example \cite[Theorem~3.9]{HennekeKielak2021} and \cite[Lemmas 6.3 and 6.4]{Fisher2021improved}). In light of this we raise the following conjecture:

\begin{conjx}\label{main conjecture}
    Let $\FF$ be a skew field.  Let $G$ be a torsion-free residually finite group of type $\mathsf{FP}_{n+1}(\mathbb F)$ such that $\mc D_{\FF G}$ exists.  Let $(G_i)_{i\in \N}$ be a residual chain of finite index normal subgroups.  Then,
    \[
        \btwo{m}(G,(G_i);\FF) = b^{\DF{G}}_m(G)
    \]
    for all $m \leqslant n$. In particular, the limit supremum in the definition of $\btwo{m}$ is a genuine limit and is independent of the choice of residual chain.
\end{conjx}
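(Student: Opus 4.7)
The plan is to reduce \Cref{main conjecture} to a statement about convergence of Sylvester matrix rank functions on $\FF G$ and then to identify the limit via Jaikin-Zapirain's uniqueness theorem for $\DF{G}$.

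Fix a free $\FF G$-resolution $P_\bullet \twoheadrightarrow \FF$ with $P_j$ of rank $c_j$ for $j \leq n+1$ and write each $\partial_j$ as a matrix over $\FF G$. Let $\rk_{\FF[G/G_i]}$ denote the Sylvester matrix rank function on $\FF[G/G_i]$ normalised so that $\rk_{\FF[G/G_i]}(1) = 1$. Since $\DF{G}$ is a skew field,
\[
    b_j^{\DF{G}}(G) = c_j - \rk_{\DF{G}}(\partial_j) - \rk_{\DF{G}}(\partial_{j+1}),
\]
while Shapiro's lemma together with the identity $\dim_\FF H = [G:G_i] \cdot \dim^{\mathrm{norm}}_{\FF[G/G_i]} H$ for finitely presented left $\FF[G/G_i]$-modules gives
\[
    \frac{b_j(G_i;\FF)}{[G:G_i]} = c_j - \rk_{\FF[G/G_i]}(\bar\partial_j) - \rk_{\FF[G/G_i]}(\bar\partial_{j+1}).
\]
Thus the conjecture is equivalent to the Sylvester rank convergence
\[
    \lim_{i\to\infty} \rk_{\FF[G/G_i]}(\bar A) = \rk_{\DF{G}}(A) \qquad (\ast)
\]
for every matrix $A$ over $\FF G$.

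To establish $(\ast)$ I would exploit compactness: the pullbacks $\rho_i(A) := \rk_{\FF[G/G_i]}(\bar A)$ form a sequence of Sylvester matrix rank functions on $\FF G$, and the space of such rank functions is compact under pointwise convergence on matrices. Hence every subsequence of $(\rho_i)$ has a further subsequence converging to some Sylvester rank function $\rho$, and it suffices to show every subsequential limit equals $\rk_{\DF{G}}$. By Jaikin-Zapirain's uniqueness theorem, this reduces to showing that $\rho$ is the rank function of some Hughes-free division $\FF G$-ring: concretely, for every finitely generated $H \leq G$, every $N \trianglelefteq H$ with $H/N$ infinite cyclic, and every transversal $T$ of $N$ in $H$, the images of $T$ in the division envelope of $\rho|_{\FF H}$ should remain linearly independent over the sub-division ring generated by $\FF N$.

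Verifying Hughes-freeness of the limit is the main obstacle. In characteristic zero the analogous control comes from L\"uck's convergence of spectral measures via the Fuglede--Kadison determinant, for which there is no known substitute in positive characteristic. For RAAGs and Bestvina--Brady groups one has an explicit presentation of $\DF{G}$ as an Ore localisation of a crossed product of a simpler skew field with an orderable group, and Hughes-freeness of $\rho$ should be checkable by direct computation on finite cyclic quotients. For general RFRS groups an arbitrary residual chain interacts less rigidly with the derived series, so one should expect only an inequality between $\btwo{j}(G,(G_i);\FF)$ and $b_j^{\DF{G}}(G)$, matching the announcement of the abstract.
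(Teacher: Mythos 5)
The statement you were asked to prove is labelled \emph{Conjecture}~A in the paper, and the paper does not prove it: it only (i) verifies it for specific families of groups in Theorem~B and (ii) establishes the one-sided inequality $b_m^{\DF{G}}(G) \leqslant \btwo{m}(G,(G_i);\FF)$ under a universality hypothesis in Theorem~D. Your proposal is honest about this --- you say the Hughes-freeness of the limit rank function is the main obstruction --- so you have not claimed to prove something the authors leave open. The reduction you set up (compute Betti numbers from Sylvester ranks of the boundary matrices, pass to the compact space of Sylvester matrix rank functions on $\FF G$, extract subsequential limits, try to identify them with $\rk_{\DF{G}}$ via Jaikin-Zapirain's uniqueness theorem) is indeed the standard framework for L\"uck-approximation-type statements. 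Two caveats: the convergence $(\ast)$ you state for \emph{all} matrices is a priori strictly stronger than the conjecture, which only concerns the ranks of the boundary maps in a fixed resolution; and more importantly, before you can speak of "the division envelope of $\rho|_{\FF H}$" you must first know that a subsequential limit $\rho$ is the rank function of an epic \emph{division} $\FF G$-ring at all, rather than merely of a von Neumann regular one. That step is itself a major open problem in positive characteristic (it is exactly what the Fuglede--Kadison determinant controls over $\Q$), and your proposal glosses over it.

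Where the paper succeeds, it does so by a genuinely different route from the one you sketch. For RAAGs, Artin kernels, and graph products of amenable RFRS groups (Theorem~B), the authors do not verify Hughes-freeness of a limit rank function; instead they introduce \emph{confident covers} and run the Davis--Okun Mayer--Vietoris spectral sequence of \cref{lem:spec} twice --- once with $\FF[G/G_n]$ coefficients to compute $\btwo{m}$ (\cref{prop:nerveHomology}), and once with $\DF{G}$ coefficients to compute $b_m^{\DF{G}}$ (\cref{prop:nerveAgrarian}) --- showing both collapse onto the $E^2_{p,0}$-line and yield the same nerve-homology formula. For the lower bound (Theorem~D), the paper does not need compactness at all: universality of $\DF{G}$ directly gives $\rk_{\DF{G}} \geqslant \rk_{\FF}$ for the augmentation $\FF G \to \FF$, hence $b_m^{\DF{G}}(G) \leqslant b_m(G;\FF)$ (\cref{thm:jaikinRank}), and the inequality for all $G_i$ follows from the scaling $b_m^{\DF{G_i}}(G_i) = [G:G_i]\, b_m^{\DF{G}}(G)$; taking $\limsup$ gives \cref{thm:kaz}. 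Your compactness framework would also recover this inequality (a subsequential limit $\rho$ satisfies $\rk_{\DF{G}} \geqslant \rho$ by universality), but it is heavier machinery than needed, and it does not advance the open direction.
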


Our main result verifies this conjecture for various families of groups---notably for RAAGs, which provided the counterexamples to L\"uck's original conjecture.  

\begin{thmx}\label{main theorem}
Let $\FF$ be a skew field and $G$ be a group commensurable with any of
\begin{enumerate}[label=(\arabic*)]
    \item\label{item:Artin} a residually finite Artin group satisfying the $K(\pi,1)$ Conjecture, such as a right-angled Artin group or RAAG.
    \item\label{item:ArtinKer} an Artin kernel, i.e.~the kernel of a homomorphism from a RAAG to $\Z$ (these include Bestvina--Brady groups);
    \item\label{item:graphProd} a graph product of amenable RFRS groups.
\end{enumerate}  If $G$ is type $\mathsf{FP}_n(\mathbb F)$, then $\btwo{m}(G, (G_n);\FF)=b^{\DF{G}}_m(G)$ for $m \leqslant n$.  In particular, the limit supremum in the definition of $\btwo{m}$ is a genuine limit independent of the choice of residual chain $(G_n)_{n\in\N}$.
\end{thmx}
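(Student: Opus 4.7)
The plan is to establish \Cref{main conjecture} for each of the three families separately, using the same general template: reduce to the group itself by commensurability, verify that the group is RFRS, obtain one inequality from the general RFRS result promised in the abstract, and then establish the reverse inequality via an explicit chain-level computation tailored to each class.

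\textbf{Commensurability and RFRS.} Both $\btwo{m}(-;\FF)$ and $b_m^{\DF{G}}(-)$ scale inversely with the index under finite-index inclusions (for the first by definition, for the second by the standard induction formula for $\DF{G}$-dimensions), so commensurability invariance of the equality reduces matters to the listed groups themselves. For each family I would record the RFRS property: RAAGs are RFRS by Duchamp--Krob, $K(\pi,1)$ Artin groups are RFRS by the recent work identifying their virtual compact-specialness, Artin kernels are subgroups of RAAGs and hence RFRS, and graph products of RFRS groups are RFRS by Antol\'in--Dicks. The general inequality for RFRS groups asserted in the abstract then supplies
\[
    b_m^{\DF{G}}(G) \leqslant \btwo{m}(G,(G_n);\FF)
\]
for all $m\leqslant n$, independently of the residual chain.

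\textbf{Reverse inequality, the RAAG / $K(\pi,1)$ Artin case.} For a RAAG $A_L$ with defining flag complex $L$, I would use the Salvetti resolution: a free $\FF A_L$-resolution of $\FF$ with one generator in degree $|\sigma|$ for each simplex $\sigma\in L$. Tensoring with $\DF{A_L}$ and taking $\DF{A_L}$-dimensions gives $b_m^{\DF{A_L}}(A_L)$ as an explicit sum of topological contributions from the links in $L$, while restricting the resolution along a finite-index subgroup $H\leqslant A_L$ and tensoring with $\FF$ recovers the Davis--Leary description of $H_*(H;\FF)$ in terms of subcomplexes of $L$ fixed by the congruence structure on $H$. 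A term-by-term comparison, using that the dimension of a summand in the Davis--Leary formula is exactly $[A_L:H]$ times the corresponding $\DF{A_L}$-dimension, gives the matching upper bound. The $K(\pi,1)$ Artin case reduces to the RAAG case by passing to a finite-index RAAG-parabolic subgroup, using that both invariants are commensurability invariants.

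\textbf{Artin kernels and graph products.} For an Artin kernel $K=\ker(\phi\colon A_L\to\Z)$ I expect the technical core of the paper to set up an explicit chain complex modelling $K$ (the $\phi$-level-set of the universal cover of the Salvetti complex, or equivalently the Bestvina--Brady-type chain complex), over which the $\DF{K}$-Betti numbers can be computed using Fisher's framework for $\DF{G}$-Betti numbers along algebraic fibrations. The corresponding mod-$p$ computation for a residual chain in $K$ gives a Bestvina--Brady variant of the Davis--Leary formula, and the two sums match. For graph products of amenable RFRS groups, the chain complex of the group splits along the clique complex of the defining graph $\Gamma$ into pieces indexed by the vertex groups; since amenable RFRS groups have vanishing $\btwo{m}(-;\FF)$ and vanishing $\DF{G}$-Betti numbers in positive degrees (the first by an approximation result in the amenable setting, the second since $\DF{G}$ is just the Ore localisation of $\FF G$ for such groups), both invariants for the graph product are computed by the same combinatorial sum over $\Gamma$.

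\textbf{Main obstacle.} The hard step is the Artin kernel case, which is precisely the novelty advertised in the title. The issue is that, unlike for a RAAG, one cannot directly write down a small equivariant resolution of $K$ whose terms have free $\DF{K}$-rank equal to the Davis--Leary--Bestvina--Brady contribution; the resolution is naturally an $\FF A_L$-module and its $\DF{K}$-structure requires some care. Making the chain-level dictionary between the mod-$p$ homology of finite-index subgroups of $K$ and the $\DF{K}$-homology of $K$ rigorous in positive characteristic, where no classical L\"uck approximation is available, is where the real work lies, and this is presumably where the RFRS tower of $A_L$ (and the induced tower in $K$) is used to transfer the $\mathbb{Q}$-picture, controlled by Jaikin-Zapirain's theory, to an $\FF$-picture by a direct algebraic argument.
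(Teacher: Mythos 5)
Your proposal takes a genuinely different route from the paper, and the key technical ideas are absent from your account of the hard step, so it does not reach a complete proof.

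The structural difference is this. You propose a sandwich argument: take the general lower bound $b_m^{\DF{G}}(G)\leqslant \btwo{m}(G,(G_n);\FF)$ coming from universality of $\DF{G}$ (this is indeed \cref{thm:lowerBound}, and your reasoning for invoking it is sound), and then establish the reverse inequality by a ``term-by-term comparison'' against a Davis--Leary-style formula. The paper does not use \cref{thm:lowerBound} in the proof of \cref{main theorem} at all. Instead, it computes \emph{both} invariants independently and shows they equal the \emph{same} explicit topological expression. The engine is the notion of a \emph{confident cover}: a finite poset of subcomplexes $\{X_\alpha\}$ of $X$ in which each piece is either a set of $0$-cells or an aspherical piece with torsion-free amenable $\pi_1$ injecting into $\pi_1X$. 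The Davis--Okun Mayer--Vietoris spectral sequence is then applied with two different coefficient systems. For the homology gradient, the spectral sequence collapses to the $E^1_{p,0}$ line up to a sublinear error because the $\FF$-homology growth of the amenable pieces is sublinear by Linnell--L\"uck--Sauer (\cref{lem:growth}). For the $\DF{G}$-Betti numbers, the same spectral sequence collapses exactly because amenable groups are $\mathcal D$-acyclic, by Henneke--Kielak. Both computations then reduce to the (honest, finite) $\FF$-homology of the links of the minimal vertices in the nerve (\cref{prop:nerveHomology} and \cref{prop:nerveAgrarian}), and the formulas visibly agree.

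The concrete gaps in your proposal are as follows. First, your upper bound for $\btwo{m}$ via ``the Davis--Leary description of $H_*(H;\FF)$'' conflates Davis--Leary's $\ell^2$-Betti number computation (over $\Q$) with the mod-$p$ homology of finite covers; the correct result for RAAGs is Avramidi--Okun--Schreve's spectral sequence argument, and the paper's contribution is to abstract it to confident covers so that it also covers Artin kernels and graph products. Your proposal never identifies the sublinear-growth input from Linnell--L\"uck--Sauer that makes the spectral sequence degenerate in the gradient computation, and without that the reverse inequality does not go through. Second, for the Artin kernel case you speculate that the argument ``transfers the $\Q$-picture to an $\FF$-picture'' through the RFRS tower of $A_L$; this is not what the paper does and would not work, since Jaikin-Zapirain's theory does not provide a mod-$p$ approximation bridge. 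The paper instead shows that the generic level set $Z$ of the height function $h\colon X_L\to\R$ carries a $BB_L^\varphi$-equivariant confident cover $\mathcal Y$ built from the standard tori intersected with $Z$, proves a careful orbit count for the point-pieces (\cref{lem:countPoints}, giving the factors $\lvert\varphi(v)\rvert$), and then applies the general confident-cover machinery; the $\FP_n(\FF)$ hypothesis enters via the Bux--Gonzalez criterion that $Z$ is $n$-acyclic over $\FF$, so the first $n$ Betti numbers of $Y$ agree with those of $BB_L^\varphi$. None of this requires ``Fisher's framework for $\DF{G}$-Betti numbers along algebraic fibrations,'' and the entire argument lives over $\FF$ from the start.
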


In the case when $G$ is not torsion-free but contains a finite-index torsion-free subgroup $H$, each of Betti number $b_i(G)$ for $G$ appearing in the statement is defined to be $b_i(H)/[G:H]$.  This extension to the definition is clearly consistent for $\btwo{i}(G;\FF)$ and is consistent for $b^{\DF{G}}_i(G)$ because  of~\cite[Lemma~6.3]{Fisher2021improved}.  

In each case we are able to compute $b^{\mc D_{\FF G}}_m(G)$ explicitly; in cases \ref{item:Artin} and \ref{item:graphProd} we find that it is equal to the $\mathbb F$-homology gradient previously computed in \cite{AvramidiOkunSchreve2021} and \cite{OkunSchreveTorsion}. In case \ref{item:ArtinKer} we compute both the $\DF{G}$-Betti numbers and the $\mathbb F$-homology gradients and show they are equal. We highlight the computation for \ref{item:ArtinKer}, which we expect will be of independent interest.  Note that this generalises the computation of Davis and Okun for the $\ell^2$-Betti numbers of Bestvina--Brady groups \cite{DavisOkun2012}.

\begin{thmx}\label{thmx BBL}
Let $\FF$ be a skew field, let $\varphi \colon A_L \rightarrow \Z$ be an epimorphism and let $BB_L^\varphi$ denote $\ker\varphi$. If $BB_L^\varphi$ is of type $\mathsf{FP}_n(\mathbb F)$ then
    \[
        b_m^{\DF{BB_L^\varphi}}(BB_L^\varphi) = b_m^{(2)}(BB_L^\varphi; \mathbb F) = \sum_{v \in L^{(0)}} \abs{\varphi(v)} \cdot \widetilde{b}_{m-1}(\lk(v); \mathbb F).
    \]
    for all $m \leqslant n$.
\end{thmx}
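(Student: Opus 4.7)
The equality $b_m^{\DF{BB_L^\varphi}}(BB_L^\varphi) = b_m^{(2)}(BB_L^\varphi;\mathbb F)$ is immediate from part~\ref{item:ArtinKer} of \Cref{main theorem}, since $BB_L^\varphi$ is an Artin kernel by definition. The remaining content is therefore the closed-form expression on the right, which I propose to establish by adapting Davis--Okun's computation of the $\ell^2$-Betti numbers of classical Bestvina--Brady groups (the case $\varphi=(1,\ldots,1)$) to general $\varphi$ and to the division-ring setting.

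The plan is to exhibit a chain complex that computes $H_*(BB_L^\varphi;\DF{BB_L^\varphi})$ in a combinatorially explicit way. Using the Salvetti complex $S_L$ of $A_L$ and the height function $\ell_\varphi\colon\widetilde{S_L}\to\mathbb R$ lifting $\varphi$, one applies Bestvina--Brady discrete Morse theory to the cubical complex $\widetilde{S_L}$, producing a Morse chain complex whose generators in degree $m$ are pairs consisting of a critical vertex together with an $(m-1)$-simplex of the ascending link at that vertex. Descending to $\widetilde{S_L}/BB_L^\varphi$ (a classifying space for $BB_L^\varphi$ when the latter is of type $\F_\infty$, and suitably truncated under the $\FP_n(\mathbb F)$ hypothesis) and then tensoring with $\DF{BB_L^\varphi}$ yields a $\DF{BB_L^\varphi}$-chain complex whose homology is $H_*(BB_L^\varphi;\DF{BB_L^\varphi})$.

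The key algebraic input is the Hughes-free property of $\DF{BB_L^\varphi}$: for any nontrivial free abelian subgroup $H\le BB_L^\varphi$, the tensor $\DF{BB_L^\varphi}\otimes_{\mathbb F H}\mathbb F$ vanishes, since every element of the augmentation ideal of $\mathbb F H$ becomes invertible in $\DF{BB_L^\varphi}$. This ensures that Morse contributions from cubes whose $BB_L^\varphi$-stabiliser is nontrivial collapse after tensoring, so that only ascending-link contributions at critical vertices of $\widetilde{S_L}/BB_L^\varphi$ survive. An orbit count parameterised by the index $[\varphi(A_L):\varphi(\langle t_v\rangle)] = |\varphi(v)|$ produces the multiplicity at each vertex $v\in L^{(0)}$ with $\varphi(v)\ne 0$, while the reduced Betti number $\widetilde b_{m-1}(\lk(v);\mathbb F)$ records the ascending-link homology count.

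The main obstacle is performing the Morse cancellation at the chain level with $\DF{BB_L^\varphi}$-coefficients and tracking the multiplicities through the identification of the Morse complex with $\bigoplus_{v\in L^{(0)},\,\varphi(v)\ne 0}|\varphi(v)|\cdot\widetilde C_{*-1}(\lk(v);\mathbb F)$; in this step the Hughes-free invertibility of $h-1$ for every $1\ne h\in BB_L^\varphi$ plays the role analogous to the semifinite trace on the group von Neumann algebra in Davis--Okun's original $\ell^2$ proof.
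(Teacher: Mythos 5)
Your opening claim is circular: part~\ref{item:ArtinKer} of \cref{main theorem} is precisely the Artin-kernel case, and Theorem~\ref{thmx BBL} is how the paper establishes that case, so you cannot cite Theorem~\ref{main theorem} to dispatch the equality $b_m^{\DF{BB_L^\varphi}}(BB_L^\varphi) = b_m^{(2)}(BB_L^\varphi;\FF)$ without begging the question.

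The Morse-theoretic argument that follows is not the route the paper takes, and several steps as stated do not work. First, $BB_L^\varphi$ acts freely on $\widetilde{S_L}$, so there are no ``cubes whose $BB_L^\varphi$-stabiliser is nontrivial''; the invertibility-of-$(h-1)$ mechanism you invoke has nothing to attach to at the level of cube stabilisers. Second, $\widetilde{S_L}/BB_L^\varphi$ is an aspherical infinite cyclic cover of the Salvetti complex and is therefore always a classifying space for $BB_L^\varphi$ but never a finite complex, independent of the type-$\F_\infty$ hypothesis you mention; the ``suitable truncation'' under $\FP_n(\FF)$ is not explained and is exactly where the real work lies. Third, your outline addresses only the $\DF{BB_L^\varphi}$-Betti numbers and is silent on the $\FF$-homology gradient $b_m^{(2)}(BB_L^\varphi;\FF)$, which is half of the theorem and in the paper needs its own argument (\cref{prop:nerveHomology} and \cref{rem gradient spaces}).

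The paper's actual argument sidesteps the Morse complex entirely. One fixes a generic level set $Z = h^{-1}(p)$ of the $BB_L^\varphi$-invariant height function, covers it by the intersections $X_\sigma\cap Z$ with the sheets $X_\sigma\subseteq X_L$ (lifts of the subtori $T_\sigma$ for simplices $\sigma$ with a living vertex), and descends to $Y = Z/BB_L^\varphi$. This is a \emph{confident cover} in the sense of \S\ref{subsec:confident}: the pieces are tori (with vanishing $\mathcal D$-homology and sublinear $\FF$-homology growth), except when $\sigma=\{v\}$ is a single living vertex, where $Y_\sigma$ consists of exactly $\abs{\varphi(v)}$ points by \cref{lem:countPoints} --- this orbit count of \emph{lines} under the $BB_L^\varphi$-action is the correct place where the multiplicity $\abs{\varphi(v)}$ arises, not a count of Morse critical vertices. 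Both invariants are then computed simultaneously from the Davis--Okun poset-of-spaces Mayer--Vietoris spectral sequence of \cref{lem:spec}, via \cref{prop:nerveHomology,prop:nerveAgrarian} and \cref{rem gradient spaces,rem agrarian spaces}. Bestvina--Brady Morse theory enters only through the Bux--Gonzalez theorem, which converts the $\FP_n(\FF)$ hypothesis on $BB_L^\varphi$ into $n$-acyclicity of $Z$ over $\FF$, allowing $Z$ to stand in for a classifying space in the relevant range. Your plan would need, at minimum, to be recast at the level of the level set $Z$ (not $\widetilde{S_L}$ or its $BB_L^\varphi$-quotient) and to supply a parallel argument for the homology gradient.
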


One may extend \Cref{main conjecture} to $G$-spaces with finite $(n+1)$-skeleton.  In this more general setting we are able to verify the conjecture for certain polyhedral product spaces (\Cref{thm:agrGraphProd}) and certain hyperplane arrangements (\Cref{hyperplanes}).  We remark that \cite{LinnellLuckSauer} effectively proves the conjecture for torsion-free amenable groups.

For groups where groups where $\DF{G}$ exists and is \textit{universal} (see \cref{sec:kaz} for a definition), we obtain that the agrarian Betti-numbers give a lower bound for the homology gradients as an easy consequence of a result of Jaikin-Zapirain \cite[Corollary 1.6]{JaikinZapirain2020THEUO}. Note that the following theorem applies to all residually (amenable and locally indicable) groups by \cite[Corollary 1.3]{JaikinZapirain2020THEUO}, and in particular to RFRS groups.

\begin{thmx}\label{thm:lowerBound}
    Let $\mathbb F$ be a skew-field and let $G$ be a residually finite group of type $\FP_{n+1}(\mathbb F)$ such that $\DF{G}$ exists and is the universal division ring of fractions of $\mathbb FG$. Then 
    \[
        b_m^{\DF{G}}(G) \leqslant b_m^{(2)}(G, (G_i); \mathbb F)
    \]
    for all $m \leqslant n$, where $(G_i)_{i \in \N}$ is any residual chain of finite-index subgroups of $G$.
\end{thmx}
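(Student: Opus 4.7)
The plan is to reduce the inequality to a comparison of Sylvester matrix rank functions on $\FF G$, where the universality of $\DF{G}$ supplies the needed direction.

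Concretely, since $G$ is of type $\FP_{n+1}(\FF)$, I would choose a partial projective resolution of the trivial $\FF G$-module $\FF$ by finitely generated free modules $P_m = (\FF G)^{k_m}$ for $m \leqslant n+1$, with boundary maps represented by matrices $\partial_m$ over $\FF G$. Tensoring with the skew field $\DF{G}$ and computing homology gives, for $m \leqslant n$,
\[
b_m^{\DF{G}}(G) \;=\; k_m - \rk_{\DF{G}}(\partial_m) - \rk_{\DF{G}}(\partial_{m+1}).
\]
Tensoring instead with the finite-dimensional $\FF$-algebra $\FF[G/G_i]$ (along the canonical surjection $\FF G\twoheadrightarrow \FF[G/G_i]$) recovers the cellular chain complex of the $G_i$-cover of a $K(G,1)$, so taking $\FF$-dimensions of cycles and boundaries and normalising by $[G:G_i]$ gives
\[
\frac{b_m(G_i;\FF)}{[G:G_i]} \;=\; k_m - \rk_i(\partial_m) - \rk_i(\partial_{m+1}),
\]
where $\rk_i(A) \coloneqq \dim_{\FF}\im(A\otimes_{\FF G}\FF[G/G_i])\,/\,[G:G_i]$.

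The key observation is that each $\rk_i$ is a Sylvester matrix rank function on $\FF G$: indeed, any finite-dimensional $\FF$-algebra $R$ carries the canonical normalised rank function $A\mapsto \dim_\FF \im(A)/\dim_\FF R$, and pulling it back along the surjection $\FF G \twoheadrightarrow \FF[G/G_i]$ preserves the rank function axioms. Now I would invoke \cite[Corollary~1.6]{JaikinZapirain2020THEUO}: the universality of $\DF{G}$ means precisely that $\rk_{\DF{G}}$ is the \emph{maximum} Sylvester matrix rank function on $\FF G$, so $\rk_i(A) \leqslant \rk_{\DF{G}}(A)$ for every matrix $A$ and every $i \in \N$. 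Applying this to $\partial_m$ and $\partial_{m+1}$, summing, and passing to the $\liminf$ gives
\[
\liminf_{i \to \infty}\bigl(\rk_i(\partial_m) + \rk_i(\partial_{m+1})\bigr) \;\leqslant\; \rk_{\DF{G}}(\partial_m) + \rk_{\DF{G}}(\partial_{m+1}),
\]
which rearranges to
\[
b_m^{\DF{G}}(G) \;\leqslant\; \limsup_{i \to \infty}\frac{b_m(G_i;\FF)}{[G:G_i]} \;=\; b_m^{(2)}(G,(G_i);\FF),
\]
as required.

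I do not expect any serious obstacle; the only point to check with care is that $\rk_i$ genuinely defines a Sylvester matrix rank function on $\FF G$ itself (rather than merely on the quotient algebra), so that the universality of $\DF{G}$ is entitled to be applied. This amounts to verifying the standard rank-function axioms for the pullback along $\FF G\twoheadrightarrow \FF[G/G_i]$, which is routine.
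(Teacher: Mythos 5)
Your approach differs from the paper's in a structurally interesting way, but it hinges on a reading of \emph{universality} that is genuinely stronger than the one the paper works with, and that is where the gap lies. The paper's definition of a universal division ring of fractions $\mathcal D$ for $R$ asks only that $\rk_{\mathcal D,\varphi} \geqslant \rk_{\mathcal E,\psi}$ for every ring homomorphism $\psi \colon R \to \mathcal E$ \emph{with $\mathcal E$ a skew field}. Your rank function $\rk_i$ is pulled back from the normalised rank on the finite-dimensional algebra $\FF[G/G_i]$, which is \emph{not} a skew field, and in positive characteristic (the case of real interest here) it need not even be semisimple, so $\rk_i$ cannot in general be decomposed as a convex combination of rank functions of the form $\rk_{\mathcal E,\psi}$. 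Consequently, the paper's stated hypothesis on $\DF{G}$ does not license the inequality $\rk_i \leqslant \rk_{\DF{G}}$. Your parenthetical worry is therefore aimed at the wrong spot: the issue is not whether $\rk_i$ is a Sylvester matrix rank function (it is, and the verification is indeed routine), but whether domination of division-ring ranks implies domination of $\rk_i$.

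The paper sidesteps this entirely. Its route is: first prove $b_m^{\DF{H}}(H) \leqslant b_m(H;\FF)$ for each finite-index subgroup $H = G_i$, which requires only comparing $\rk_{\DF{H}}$ with the rank induced by the augmentation $\FF H \to \FF$ --- a bona fide map to a skew field, so the stated form of universality applies verbatim; and then invoke the scaling identity $[G:H]\cdot b_m^{\DF{G}}(G) = b_m^{\DF{H}}(H)$ from \cite[Lemma 6.3]{Fisher2021improved} to transfer the inequality back to $G$. Your argument avoids the scaling lemma and the passage to subgroups, which is attractive, but to make it run you would need the stronger fact that $\rk_{\DF{G}}$ dominates \emph{all} Sylvester matrix rank functions on $\FF G$ (not merely the division-ring ones). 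That stronger statement is plausibly available for the groups in question via Jaikin-Zapirain's work, but it is not what the phrase ``universal division ring of fractions'' means, nor is it what the paper's hypotheses deliver, so as written the step is a genuine gap. If you want to preserve your more direct route, you should replace the appeal to universality with an explicit citation to a result establishing that $\rk_{\DF{G}}$ is extremal among all Sylvester rank functions on $\FF G$; otherwise, adopt the paper's detour through subgroups and the augmentation map.
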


We also mention the work of Bergeron--Linnell--L\"uck--Sauer \cite{BLLS_p_analytic}, which we believe provides some more evidence for \cref{main conjecture}. Let $\Gamma$ be the fundamental group of a finite CW complex $X$ with a homomorphism $\varphi \colon \Gamma \rightarrow \GL_n(\Z_p)$. Let $G$ be the closure of $\varphi(\Gamma)$ and let $G_i = \ker(G \rightarrow \GL_n(\Z/p^i\Z))$. Recall that the \textit{Iwasawa algebra} of $G$ over $\mathbb F_p$ is $\mathbb F_p\llbracket G \rrbracket = \varprojlim \mathbb F_p[G/G_i]$. If $G$ is torsion-free, then $\mathbb F_p\llbracket G \rrbracket$ has no zero divisors and is Ore with respect to its nonzero elements. Letting $\overline{\Gamma} = \Gamma / \ker \varphi$, we have a ring homomorphism
\[
    \mathbb F \overline\Gamma \rightarrow \mathbb F G \rightarrow \mathbb F_p\llbracket G \rrbracket \rightarrow \mathcal D,
\]
where $\mathcal D$ is the Ore localisation of $\mathbb F_p\llbracket G \rrbracket$. If $M$ is an $\mathbb F_p\llbracket G \rrbracket$-module and $G$ is torsion-free, then the dimension of $M$ is
\[
    \dim_{\mathbb F_p\llbracket G \rrbracket} M := \dim_\mathcal D (\mathcal D \otimes_{\mathbb F_p\llbracket G \rrbracket} M).
\]
If $G$ is not torsion-free, then we can pass to a uniform finite-index subgroup $G_0 \leqslant G$ and then define $\dim_{\mathbb F_p\llbracket G \rrbracket} M = [G:G_0]\inv \dim_{\mathbb F_p\llbracket G \rrbracket} M$. There is then a natural mod $p$ analogue of $\ell^2$-Betti numbers given by
\[
    \beta_k(\overline{X}, \overline{\Gamma}; \FF_p) = \dim_{\mathbb F_p\llbracket G \rrbracket} H_k( \mathbb F_p\llbracket G \rrbracket \otimes_{\FF_p\overline{\Gamma}} C_\bullet(\overline{X}; \FF_p))
\]
where $\overline X$ is the cover of $X$ corresponding to $\ker \varphi$. With this setup, Bergeron--Linnell--L\"uck--Sauer prove the following mod-$p$ L\"uck approximation style theorem.

\begin{thm}
    With the notation above, let $\Gamma_i = \varphi\inv(G_i)$ and let $X_i$ be the corresponding cover of $X$. Then
    \[
        b_k(X_i; \FF_p) = [\Gamma : \Gamma_i]  \cdot \beta_k(\overline X, \overline{\Gamma}; \FF_p) + O\left( [\Gamma:\Gamma_i]^{1 - \frac{1}{\dim G}} \right)
    \]
    for all $k$.
\end{thm}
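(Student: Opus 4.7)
\emph{Proof plan.} The cellular chain complex of any cover of $X$ determined by $\ker\varphi$ depends only on $\overline{\Gamma}=\Gamma/\ker\varphi$, and since $\overline{\Gamma}$ is dense in $G$ while the $G_i$ are open, one has $\overline{\Gamma}/(\overline{\Gamma}\cap G_i)\cong G/G_i$, so that $[\Gamma:\Gamma_i]=[G:G_i]$. Setting $D_\bullet=\FF_p\llbracket G\rrbracket\otimes_{\FF_p\overline{\Gamma}}C_\bullet(\overline{X};\FF_p)$, a bounded complex of finitely generated free $\FF_p\llbracket G\rrbracket$-modules, the cellular chain complex computing $b_k(X_i;\FF_p)$ is canonically identified with $\FF_p[G/G_i]\otimes_{\FF_p\llbracket G\rrbracket}D_\bullet$, while $\beta_k(\overline{X},\overline{\Gamma};\FF_p)=\dim_{\FF_p\llbracket G\rrbracket}H_k(D_\bullet)$ by definition.

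This reduces the theorem to the following estimate: for every bounded complex $D_\bullet$ of finitely generated free $\FF_p\llbracket G\rrbracket$-modules,
\[
\dim_{\FF_p}H_k\bigl(\FF_p[G/G_i]\otimes_{\FF_p\llbracket G\rrbracket}D_\bullet\bigr)=[G:G_i]\cdot\dim_{\FF_p\llbracket G\rrbracket}H_k(D_\bullet)+O\bigl([G:G_i]^{1-1/d}\bigr),
\]
where $d=\dim G$. After passing to a uniform pro-$p$ open subgroup of $G$, so that $\FF_p\llbracket G\rrbracket$ is a noetherian Ore domain, each homology module $M=H_k(D_\bullet)$ fits in a short exact sequence relating $M$ to a free module of rank $\dim_{\FF_p\llbracket G\rrbracket}M$ and a finitely generated torsion module. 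Free modules contribute exactly $[G:G_i]\cdot\dim_{\FF_p\llbracket G\rrbracket}M$ with no error term, so the whole problem collapses to the single estimate $\dim_{\FF_p}T/\mathfrak{m}_iT=O([G:G_i]^{1-1/d})$ for a finitely generated torsion $\FF_p\llbracket G\rrbracket$-module $T$, where $\mathfrak{m}_i=\ker(\FF_p\llbracket G\rrbracket\twoheadrightarrow\FF_p[G/G_i])$. This module-level bound is then transferred to the homology of $D_\bullet$ by a standard rank--nullity argument applied to the differentials, or by a spectral sequence whose $E^2$-page consists of $\Tor$-groups of the $H_k(D_\bullet)$ with $\FF_p[G/G_i]$.

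The hard part is precisely this torsion estimate, and it is where Lazard's structure theory of $p$-adic analytic groups enters. For a uniform pro-$p$ group, $\FF_p\llbracket G\rrbracket$ carries a natural filtration whose associated graded is a polynomial ring in $d$ variables over $\FF_p$, and $\mathfrak{m}_i$ is compatible with a power of the augmentation ideal under a congruence filtration. Since $T$ is torsion, its Hilbert--Samuel polynomial has degree at most $d-1$, so its graded quotients grow polynomially of degree $\leqslant d-1$; reassembling these filtered pieces up to the level of $\mathfrak{m}_i$ produces the bound $O([G:G_i]^{(d-1)/d})=O([G:G_i]^{1-1/d})$. Morally, the exponent $1-1/d$ is the isoperimetric exponent of $\Z^d$: the torsion contribution of $T$ lives on the boundary of a region of $\FF_p\llbracket G\rrbracket$-volume $[G:G_i]$, which has size $O([G:G_i]^{1-1/d})$.
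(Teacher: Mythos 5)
This statement is \cref*{thm:luck}-style background that the paper cites verbatim from Bergeron--Linnell--L\"uck--Sauer \cite{BLLS_p_analytic} and does \emph{not} prove; there is therefore no in-paper argument to compare against. Evaluating your sketch on its own terms, it does track the structure of the BLLS proof: identify $C_\bullet(X_i;\FF_p)$ with $\FF_p[G/G_i]\otimes_{\FF_p\llbracket G\rrbracket}D_\bullet$, pass to a uniform open subgroup so that $\FF_p\llbracket G\rrbracket$ is a noetherian Auslander-regular domain with Ore localisation, split a finitely generated module into a free part of the correct $\FF_p\llbracket G\rrbracket$-rank plus torsion, and control the torsion contribution by a Hilbert--Samuel degree argument (the content of Harris's estimate that BLLS invoke). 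The identity $[\Gamma:\Gamma_i]=[G:G_i]$ via density of $\varphi(\Gamma)$ and openness of $G_i$ is correct, and the isoperimetric heuristic for the exponent $1-1/\dim G$ is the right intuition.

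There are, however, two places where your sketch currently elides real work, and you should be aware they are not routine. First, the passage ``module-level bound $\Rightarrow$ homology-level bound'' requires more than $\Tor_0$: whether you run a universal-coefficient spectral sequence or a rank--nullity count on $\mathrm{rk}_{\FF_p}(\overline{\partial}_k)$, you end up needing that $\dim_{\FF_p}\Tor_j^{\FF_p\llbracket G\rrbracket}(\FF_p[G/G_i],T)=O([G:G_i]^{1-1/d})$ for a torsion module $T$ and \emph{all} $j\geqslant 0$, not just $j=0$. The naive bound $\dim_{\FF_p}\Tor_j\leqslant[G:G_i]\cdot\mathrm{rk}\,P_j$ from a free resolution $P_\bullet\to T$ only gives $O([G:G_i])$, which is useless; one genuinely needs the filtration/Hilbert--Samuel argument applied at the level of the resolution, or the d\'evissage of torsion Iwasawa modules, to push the exponent down. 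Relatedly, the splitting of $M$ is via \emph{two} torsion modules (the torsion submodule $T(M)$ and the cokernel of $M/T(M)\hookrightarrow R^r$), and both introduce $\Tor_1$ terms into the long exact sequences. Second, the comparison between $\mathfrak m_i=\ker(\FF_p\llbracket G\rrbracket\twoheadrightarrow\FF_p[G/G_i])$ and powers of the maximal ideal $\mathfrak m$ (so that the Hilbert--Samuel function of $T$ applies) needs Lazard's precise description of the lower $p$-series filtration; one wants something like $\mathfrak m^{c\,p^i}\subseteq\mathfrak m_i$ for a uniform $G$, together with $[G:G_i]\asymp p^{id}$, before the degree-$(d-1)$ growth translates into $O([G:G_i]^{1-1/d})$. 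Neither point is a wrong turn, but both are the actual content of the theorem and deserve to be filled in rather than summarised as ``reassembling these filtered pieces''.
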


\subsection{Outline of the paper}  
In \Cref{sec:prelims} we give the relevant background on group rings and the computational tools we will need.  In fact in many computations we are able to work in the more general setting of \emph{agrarian invariants} as defined in \cite{HennekeKielak2021} and so summarise the relevant theory.  The remaining three sections of the paper are described in the sequel.

\subsubsection{Computations}
In \Cref{sec:algcomps} we introduce the notion of a \emph{confident complex}; roughly this is a CW complex admitting a finite cover by open sets with amenable fundamental group such that the nerve of the cover has good properties.  We then compute the $\DF{G}$-Betti numbers and $\FF_p$-homology gradients of these complexes showing they are related to $\FF_p$-Betti numbers of the nerve.  The computation of the first invariant a uses a spectral sequence collapsing result of Davis--Okun \cite{DavisOkun2012} building on work of Davis--Leary \cite{DavisLeary2003}.  The computation of the second invariant is similar in spirit to the work of Avramidi--Okun--Schreve \cite{AvramidiOkunSchreve2021} and Okun--Schreve \cite{OkunSchreveTorsion} and again relies on a spectral sequence argument.

The remainder of the section involves computations of the homological invariants for various spaces and groups with the goal of showing they satisfy \Cref{main conjecture}. The computations are summarised as follows. In \Cref{thm:BBapprox} we show that Artin kernels of type $\FP_n$ are fundamental groups of confident spaces with an $n$-acyclic covering space, and use this to prove \Cref{thmx BBL}.  In \Cref{thm:agrGraphProd} we compute the invariants for graph products of amenable RFRS groups (including RAAGs) and polyhedral products of classifying spaces of amenable RFRS groups.  In \Cref{thm Artin groups} we compute the invariants for the Artin groups alluded to earlier. In fact the method applies to RFRS groups admitting a strict fundamental domain with certain stabilisers (\Cref{sec strict fund}). Finally, inspired by \cite{DavisJanuszkiewiczLeary2007}, we compute the invariants for hyperplane arrangements whenever the invariants are defined (\Cref{hyperplanes}).

\subsubsection{A lower bound for homology gradients}

In \Cref{sec:kaz}, we recall the notion of a universal division ring of fractions and prove \cref{thm:lowerBound} as a consequence of work of Jaikin-Zapirain in \cite{JaikinZapirain2020THEUO}.

\subsubsection{Applications to fibring}

In \Cref{thm no fibring} we prove that if a RFRS group of type $\FP_{n+1}(\FF)$ is not virtually $\FP_n(\FF)$-fibred, then there is some $m\leqslant n$ such that $b_m^{(2)}(G,(G_n);\FF)>0$ for every residual chain $(G_n)_{n\in\N}$.  

In the remainder of \cref{sec:fibring} we apply the computations of the agrarian invariants of RAAGs and Artin kernels to obtain some results about fibring in RAAGs. In particular, we make progress towards the following question of Matthew Zaremsky, communicated to us by Robert Kropholler: If a RAAG virtually algebraically fibres with kernel of type $\F_n$, then does it algebraically fibre with kernel of type $\F_n$? We are able to answer this question if one replaces $\F_n$ with $\FP_n(R)$, where $R$ is either a skew field, $\Z$, or $\Z/m$ for some integer $m > 1$ (\cref{thm:virtFibiffFib}). Since finitely presented groups of type $\FP_n$ are of type $\F_n$, this leaves $\F_2$ as the main case of interest in Zaremsky's question.

In \cite{Fisher2021improved}, the first author showed that if $G$ is RFRS and $\ell^2$-acyclic in dimensions $\leqslant n$, then $G$ virtually $\FP_n(\Q)$-fibres, but left unanswered whether $\ell^2$-Betti numbers control virtual $\FP_n$-fibring. We resolve that here by showing that there are RAAGs that are $\ell^2$-acyclic but do not virtually $\FP_2$-fibre (\cref{prop:noZfibre}). Finally, we use the explicit computation of the agrarian invariants of Artin kernels to show that if $A_L$ is a RAAG and $\FF$ is a skew field, then either all of the $\FP_n(\FF)$-fibres of $A_L$ are themselves virtually $\FP_n(\FF)$-fibred or none of them are (\cref{thm:fibresFibre}, \cref{cor:fibresFibre}).

\subsubsection{Applications to amenable category and minimal volume entropy}
In \Cref{sec min vol entropy} we relate the Agrarian invariants to the amenable category of \cite{CapovillaLohMoraschini2022} and the minimal volume entropy of Gromov \cite{Gromov1982volBC}. 
 The relevant background is described in the section.  
 
 We show via an argument of Sauer \cite{Sauer2016} that having a small amenable category implies vanishing of $\DF{G}$-Betti numbers for residually finite groups (\Cref{AMN cat vs DFG}).  We also show that for residually finite groups with uniformly uniform exponential growth admitting a finite $K(G,1)$, having a non-zero $\DF{G}$-Betti number implies the minimal volume entropy of $G$ is non-zero (\Cref{cor min vol}).  The analogous results for $\FF_p$-homology gradients were established by Sauer \cite{Sauer2016} and Haulmark--Schreve \cite{HaulmarkSchreve2022minimal} respectively.  In some sense this provides more evidence towards \Cref{main conjecture}.  Finally, we give a condition for the minimal volume entropy of an Artin kernel admitting a finite $K(G,1)$ to be non-zero (\Cref{min vol BBL}) and conjecture a converse.

\subsection*{Acknowledgements}
The first author is supported by the National Science and Engineering Research Council (NSERC) [ref.~no.~567804-2022]. The second received funding from the European Research Council (ERC) under the European Union's Horizon 2020 research and innovation programme (Grant agreement No. 850930). The authors would like to thank Ismael Morales for asking a question that led to \cref{thm:fibresFibre}, as well as Robert Kropholler and Matt Zaremsky for communicating a question which inspired \Cref{thm:virtFibiffFib}.  The authors would like to thank Dawid Kielak for helpful conversations.

\section{Preliminaries}\label{sec:prelims}

Throughout all rings are assumed to be associative and unital.

\subsection{Finiteness properties} \label{subsec:finprops}

Let $R$ be a ring and $G$ be a group. Then $G$ is said to be of \textit{type $\FP_n(R)$} if there is a projective resolution $P_\bullet \rightarrow R$ of the trivial $RG$-module $R$ such that $P_i$ is finitely generated for all $i \leqslant n$. If $G$ is of type $\FP_n(R)$ and $S$ is an $R$-algebra, then $G$ is of type $\FP_n(S)$. Thus, if $G$ is of type $\FP_n(\Z)$, then $G$ is of type $\FP_n(R)$ for any ring $R$; because of this, we write $\FP_n$ to mean $\FP_n(\Z)$. Note that finite generation is equivalent to $\FP_1(R)$ for any ring $R$, though $\FP_2$ is in general a stronger condition than $\FP_2(R)$.

We also the mention the homotopical analogue of the $\FP_n(R)$ condition: a group $G$ is of \textit{type $\F_n$} if $G$ has a classifying space with finite $n$-skeleton. Note that $\F_1$ is equivalent to $\FP_1(R)$ for any ring $R$, but that $\F_n$ is in general strictly stronger than $\FP_n(R)$ for $n \geqslant 2$.

\subsection{Agrarian invariants}

Let $G$ be a group and let $\mathbb F$ be a skew field. The \textit{group ring} $\mathbb F G$ is the set of formal sums $\sum_{g \in G} \lambda_g g$, where $\lambda_g \in \mathbb F$ is zero for all but finitely many $g \in G$, equipped with the obvious addition and multiplication operations. Let $\mathcal D$ be a skew field. Then an \textit{agrarian embedding} is a ring monomorphism $\alpha \colon \mathbb F G \hookrightarrow \mathcal D$. Agrarian embeddings were first studied by \cite{Malcev1948, Neumann1949}, who proved that group rings of bi-orderable groups have agrarian embeddings. Note that the existence of an agrarian embedding implies that $G$ is torsion-free and that the Kaplansky zero divisor and idempotent conjectures hold for $\mathbb F G$. There is no known example of a torsion-free group $G$ and a skew field $\mathbb F$ such that $\mathbb F G$ does not have an agrarian embedding.

Let $X$ be a CW-complex with a cellular $G$ action such that for every $g \in G$ and every open cell $e$ of $X$, if $g \cdot e \cap e \neq \varnothing$ then $g$ fixes $e$ pointwise. The cellular chain complex $C_\bullet(X; \mathbb F)$ is naturally an $\mathbb FG$-module. In this situation, $X$ is called a $G$-CW complex. If $\alpha \colon \mathbb F G \rightarrow \mathcal D$ is an agrarian map, then we can define the \textit{$\mathcal D$-homology} and \textit{$\mathcal D$-Betti numbers} of $X$ by 
\[
    H_p^\mathcal D (X) := H_p(\mathcal D \otimes_{\mathbb FG} C_\bullet(X)) \qquad \text{and} \qquad b_p^\mathcal{D}(X) = \dim_\mathcal D H_p^\mathcal D(X)
\]
where $\dim_\mathcal D$ denotes the dimension as a $\mathcal D$-module, which is well-defined since $\mathcal D$ is a skew field. Taking $X$ to be a classifying space of $G$, we obtain the $\mathcal D$-homology and $\mathcal D$-Betti numbers of $G$.

The following theorem gives a central example of an agrarian embedding.

\begin{thm}[Linnell \cite{LinnellDivRings93}]
    If $G$ is a torsion-free group satisfying the \textnormal{strong Atiyah conjecture}, then there is a skew field $\mc D_{\Q G}$, known as the \textnormal{Linnell skew field} of $G$, and an agrarian embedding $\mathbb Q G \hookrightarrow \mathcal D_{\Q G}$ such that $b_p^{(2)}(G) = b^{\mathcal D_{\Q G}}(G)$ .
\end{thm}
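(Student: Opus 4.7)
The plan is to realise $\mc D_{\Q G}$ as a subring of the algebra $\mathcal U(G)$ of densely defined closed operators affiliated to the group von Neumann algebra $\mathcal N(G)$, and then exploit the strong Atiyah conjecture to prove that this subring is a skew field whose dimension theory recovers $\ell^2$-Betti numbers.

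Concretely, the left regular representation provides embeddings $\Q G \hookrightarrow \mathcal N(G) \hookrightarrow \mathcal U(G)$, where $\mathcal N(G)$ is the weak-operator closure of $\Q G$ in $\mathcal B(\ltwo G)$ and $\mathcal U(G)$ is a von Neumann regular ring into which $\mathcal N(G)$ embeds as a classical ring of quotients with respect to its non-zerodivisors. I would then define $\mc D_{\Q G}$ to be the \emph{division closure} of $\Q G$ in $\mathcal U(G)$: the smallest subring containing $\Q G$ which is closed under taking inverses of those of its elements that happen to be units in $\mathcal U(G)$. The agrarian map $\Q G \hookrightarrow \mc D_{\Q G}$ is then immediate from the construction, so only two things remain: the skew field property of $\mc D_{\Q G}$, and the equality of Betti numbers.

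The main obstacle is showing that $\mc D_{\Q G}$ is a skew field, i.e.\ that every non-zero $a \in \mc D_{\Q G}$ is already invertible in $\mathcal U(G)$. This is exactly where the strong Atiyah conjecture is needed. That hypothesis says that for every matrix $A \in \Mat_n(\Q G)$ the von Neumann dimension $\dim_{\mathcal N(G)} \ker(A)$ is an integer, and since $G$ is torsion-free no rational values with denominator larger than $1$ can occur. An inductive argument along the construction of the division closure, tracking $\mathcal N(G)$-ranks of matrices whose entries lie in successively larger subrings, shows that every element of $\mc D_{\Q G}$, viewed as an operator on $\ltwo G$, has kernel of integer $\mathcal N(G)$-dimension. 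But any non-zero element of $\mathcal U(G)$ whose kernel has $\mathcal N(G)$-dimension $0$ is invertible as an affiliated operator, and its inverse lies in $\mc D_{\Q G}$ by the definition of division closure. Hence every non-zero element of $\mc D_{\Q G}$ is a unit, so $\mc D_{\Q G}$ is a skew field.

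For the identification of Betti numbers, I would combine two facts. First, $\mathcal U(G)$ is flat over $\mathcal N(G)$ and Lück's dimension function extends compatibly from $\mathcal N(G)$ to $\mathcal U(G)$, so that
\[
    b_p^{(2)}(G) = \dim_{\mathcal N(G)} H_p(G; \mathcal N(G)) = \dim_{\mathcal U(G)} H_p(G; \mathcal U(G)).
\]
Second, since $\mc D_{\Q G}$ is a skew field, every module over it is free, so $\mc D_{\Q G} \hookrightarrow \mathcal U(G)$ is automatically flat and induces a natural isomorphism $\mathcal U(G) \otimes_{\mc D_{\Q G}} H_p(G; \mc D_{\Q G}) \cong H_p(G; \mathcal U(G))$. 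Compatibility of the $\mathcal U(G)$-rank with the $\mc D_{\Q G}$-dimension then gives $b_p^{\mc D_{\Q G}}(G) = \dim_{\mathcal U(G)} H_p(G; \mathcal U(G)) = b_p^{(2)}(G)$, completing the proof.
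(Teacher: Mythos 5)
The paper does not actually prove this theorem; it is quoted from Linnell \cite{LinnellDivRings93} without proof, so there is no internal argument to compare against. Your sketch follows the standard construction found in the literature (Linnell, and as presented by Reich and by L\"uck): define $\mathcal D_{\Q G}$ as the division closure of $\Q G$ inside the algebra $\mathcal U(G)$ of affiliated operators, use the strong Atiyah conjecture to conclude $\mathcal D_{\Q G}$ is a skew field, and then transport the dimension function across the flat extensions $\mathcal D_{\Q G} \hookrightarrow \mathcal U(G)$ and $\mathcal N(G) \hookrightarrow \mathcal U(G)$ to identify the Betti numbers. The last two paragraphs of your argument (flatness of $\mathcal U(G)$ over $\mathcal N(G)$, compatibility of the extended dimension, flatness of $\mathcal U(G)$ over the skew field $\mathcal D_{\Q G}$, and the resulting base-change isomorphism on homology) are correct.

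The step that does not work as written is the claimed ``inductive argument along the construction of the division closure, tracking $\mathcal N(G)$-ranks.'' The division closure is built by repeatedly adjoining inverses, sums, and products, and the property ``$\ker a$ has integer $\mathcal N(G)$-dimension'' is not visibly preserved under sums or products of affiliated operators, so a naive induction along the tower $R_0 = \Q G \subseteq R_1 \subseteq \cdots$ breaks down immediately. The actual argument passes instead through the \emph{rational closure} $\mathcal R_{\Q G}$: one uses the matrix form of the strong Atiyah conjecture (that $\dim_{\mathcal N(G)} \ker A \in \Z$ for all matrices $A$ over $\Q G$, not merely $1\times 1$ elements), shows via Cramer-type identities that $\mathcal R_{\Q G}$ is von Neumann regular, and then that a von Neumann regular subring of $\mathcal U(G)$ on which the induced dimension function is faithful and $\{0,1\}$-valued on cyclic projectives must be a skew field. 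Since the division closure is contained in the rational closure, and a subring of a skew field that is closed under the available inverses and contains $\Q G$ must already equal it, one concludes $\mathcal D_{\Q G} = \mathcal R_{\Q G}$ is a skew field. This is the content you would need to supply in place of the inductive claim.
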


The strong Atiyah conjecture asserts that if $X$ is a free $G$-CW complex of finite type and $G$ has finite subgroups of bounded order, then $\operatorname{lcm}(G) \cdot b_p^{(2)}(X) \in \Z$ for all $p \in \N$, where $\operatorname{lcm}(G)$ is the least common multiple of the orders of finite subgroups of $G$. The strong Atiyah conjecture is known for many groups, in particular for residually (torsion-free solvable groups) \cite{SchickL2Int2002}, for cocompact special groups \cite{SchreveSpecialAtiyah}, and for locally indicable groups \cite{ZapirainLopezStrongAtiyah2020}. Importantly for us, the Atiyah conjecture holds for all RFRS groups, and in particular all subgroups of RAAGs. The following theorem of Jaikin-Zapirain provides many examples of agrarian embeddings in positive characteristic.

\begin{thm}[Jaikin-Zapirain {\cite[Theorem 1.1]{JaikinZapirain2020THEUO}}]\label{thm:jaikin}
    Let $\mathbb F$ be a skew field and let $G$ be either locally indicable amenable, residually (torsion-free nilpotent), or free-by-cyclic. Then there exists a division ring $\mathcal D_{\mathbb F G}$, known as the \textnormal{Hughes-free division ring} of $\mathbb FG$, and an agrarian embedding $\mathbb F G \hookrightarrow \mathcal D_{\mathbb F G}$.
\end{thm}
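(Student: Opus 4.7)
The plan is to treat the three families separately, constructing in each case a division ring $\mathcal D$ containing $\mathbb F G$ and verifying the Hughes-free property: for every finitely generated subgroup $H \leqslant G$ with a normal subgroup $N$ having $H/N \cong \Z$ and any lift $t \in H$ of a generator, the sum $\bigoplus_{i \in \Z} t^i \mathcal D_N$ is direct inside $\mathcal D_H$, where $\mathcal D_K$ denotes the division closure of $\mathbb F K$ in $\mathcal D$. Once existence is established, uniqueness (up to $\mathbb F G$-isomorphism) is supplied by Hughes's original theorem, so the task reduces to existence in each of the three cases.

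For residually (torsion-free nilpotent) groups, the natural approach is to exploit Mal'cev's theorem that torsion-free nilpotent groups are bi-orderable. Choosing bi-invariant orders on the quotients $G/G_i$ in a residual chain and taking a suitable limit yields a bi-invariant order on $G$, and hence an embedding $\mathbb F G \hookrightarrow \mathbb F((G))$ into a Mal'cev--Neumann skew field. The Hughes-free property would then be read off from the compatibility of this order with the exact sequence $N \hookrightarrow H \twoheadrightarrow \Z$, since distinct powers of $t$ live in disjoint supports inside the series ring. For free-by-cyclic groups $G = F \rtimes_\phi \Z$, I would start from an agrarian embedding $\mathbb F F \hookrightarrow \mathcal D_{\mathbb F F}$ (available, for instance, since $F$ is bi-orderable), extend $\phi$ to an automorphism of $\mathcal D_{\mathbb F F}$, and form the skew Laurent series ring $\mathcal D_{\mathbb F F}((t;\phi))$, which is a skew field containing $\mathbb F G$. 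For locally indicable amenable groups, one inducts on a suitable complexity (for instance on elementary-amenable rank), building $\mathcal D$ by iterated Ore localisations of twisted group rings $\mathcal D_N \ast Q$ with $Q$ abelian; the Ore condition is supplied at each stage by amenability.

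The main obstacle in all three cases is verifying the Hughes-free condition globally, not just for the distinguished $\Z$-quotient used in the construction. The candidate $\mathcal D$ decomposes transparently according to the filtration by which it was built, but a finitely generated subgroup $H \leqslant G$ mapping onto $\Z$ need not be compatible with that filtration—for instance, in the free-by-cyclic case $H$ may intersect the $\Z$ factor in a proper subgroup, so the induced $t$-adic structure on $\mathcal D_H$ is not automatically the correct one. A unifying strategy, and the one I expect to give the slickest proof, is to show that $\mathcal D$ is the \emph{universal} division ring of fractions of $\mathbb F G$ in Cohn's sense: the Hughes-free property then follows from the universal property applied to each $\mathbb F H \hookrightarrow \mathcal D_H$, rather than by hand. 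In practice, establishing universality in each case amounts to showing that every full matrix over $\mathbb F G$ becomes invertible over $\mathcal D$, which for residually (torsion-free nilpotent) and free-by-cyclic groups can be attacked by reducing to the analogous statement for the free group via a transfer argument.
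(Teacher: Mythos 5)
This theorem is not proved in the paper: it is imported verbatim from Jaikin-Zapirain's work (cited as Theorem~1.1 of \cite{JaikinZapirain2020THEUO}), and the paper uses it only as a black box supplying the agrarian embeddings it needs. So there is no ``paper's own proof'' to compare against here; what you have written is a sketch of a proof of Jaikin-Zapirain's theorem itself.

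As such a sketch, your starting constructions are the right ones (Mal'cev--Neumann power series for the bi-orderable residually torsion-free nilpotent case, a skew Laurent construction in the free-by-cyclic case, Ore localisation---which amenability supplies directly, with no need for an induction on elementary-amenable complexity---in the amenable case), and you have correctly isolated the genuine difficulty: verifying the Hughes-free condition for every finitely generated subgroup $H$ and every $\Z$-quotient of $H$, not just the distinguished one used to build $\mathcal D$. But the shortcut you propose runs in the wrong direction. In Jaikin-Zapirain's development the Hughes-free property is established first (this is where essentially all the work lies), and universality is then \emph{derived} as a consequence---this is precisely his Corollary~1.3, recorded in the present paper as \Cref{thm:jaikinUniv}. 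There is no general principle that a universal division ring of fractions is automatically Hughes-free: universality controls ranks of matrices over $\mathbb F G$, whereas Hughes-freeness is a statement about $\mathcal D_N$-linear independence of powers of a lift $t$ inside $\mathcal D_H$, and the coefficients $d_i \in \mathcal D_N$ involved need not lie in $\mathbb F H$. Without an argument for that implication, the central step of the proposal is left open.
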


\subsection{A Mayer--Vietoris type spectral sequence}\label{subsec:spsq}

The following construction of a Mayer--Vietoris type spectral sequence is due to Davis and Okun \cite{DavisOkun2012}. We will state the homological version of the spectral sequence with arbitrary coefficients. Let $\mathsf{P}$ be a poset. We define $\Flag(\mathsf{P})$ to be the simplicial realisation of $\mathsf{P}$, i.e.~$\Flag(\mathsf{P})$ is the simplicial complex whose simplices are the totally ordered, finite, nonempty subsets of $\mathsf{P}$. Hence, every simplex $\sigma \in \Flag(\mathsf{P})$ has a well-defined \textit{minimum vertex}, denoted $\min(\sigma)$.

If $Y$ is a CW complex, then a \textit{poset of spaces} in $Y$ over $\mathsf{P}$ is a cover $\mathcal{Y} = \{Y_a\}_{a \in \mathsf{P}}$ of $Y$ with each $Y_a$ a subcomplex such that
\begin{enumerate}
    \item $a < b$ implies $Y_a \subseteq Y_b$;
    \item $\mathcal{Y}$ is closed under finite, nonempty intersections.
\end{enumerate}

Let $R$ be a ring and let $\Mod_R$ denote the category $R$-modules. A \textit{poset of coefficients} for $\mathsf{P}$ is a contravariant functor $\mathcal{A} \colon \mathsf{P} \rightarrow \Mod_R$. The functor $\mathcal{A}$ induces a system of coefficients on  $\Flag(\mathsf{P})$ via $\sigma \mapsto \mathcal{A}_{\min(\sigma)}$, which gives chain complex
\[
    C_j(\Flag(\mathsf{P}); \mathcal{A}) := \bigoplus_{\sigma \in \Flag(\mathsf{P})^{(j)}} \mathcal{A}_{\min(\sigma)},
\]
where $\Flag(\mathsf{P})^{(j)}$ is the set of $j$-simplices in $\Flag(\mathsf{P})$.

\begin{lem}[{\cite[Lemmas 2.1 and 2.2]{DavisOkun2012}}] \label{lem:spec}
Let $M$ be an $R$-module and suppose $\mathcal{Y} = \{Y_a\}_{a \in \mathsf{P}}$ is a poset of spaces over $\mathsf{P}$ in a CW complex $Y$. There is a Mayer--Vietoris type spectral sequence
\[
    E_{p,q}^2 = H_p(\Flag(\mathsf{P}); \mathcal{H}_q(\mathcal{Y}; M)) \rightarrow H_{p+q}(Y;M),
\]
where $\mathcal{H}_\bullet(\mathcal{Y}; M)$ is a system of coefficients given by 
\[
    \mathcal{H}_\bullet(\mathcal{Y}; M)(\sigma) = H_\bullet(Y_{\min(\sigma)}; M).
\]
Moreover, if the induced homomorphism $H_\bullet(Y_a; M) \rightarrow H_\bullet(Y_b; M)$ is zero whenever $a < b$ in $\mathsf{P}$, then
\[
    E_{p,q}^2 = \bigoplus_{a \in \mathsf{P}} H_p(\Flag(\mathsf{P}_{\geqslant a}), \Flag(\mathsf{P}_{> a}) ; H_q(Y_a ; M)).
\]
\end{lem}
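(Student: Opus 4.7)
The plan is to form a Mayer--Vietoris type double complex and extract both claims from its two associated spectral sequences. Define
\[
    E^0_{p,q} = \bigoplus_{\sigma \in \Flag(\mathsf{P})^{(p)}} C_q(Y_{\min(\sigma)}; M),
\]
with the vertical differential induced by the cellular boundary on each $Y_a$ and the horizontal differential the alternating sum of face maps: for $\sigma = [a_0 < \cdots < a_p]$ and its face $\sigma_i$, one has $\min(\sigma_i) = \min(\sigma) = a_0$ when $i > 0$, so the contribution is the identity on $C_q(Y_{a_0};M)$, while for $i = 0$ the minimum jumps from $a_0$ to $a_1$ and the contribution is the inclusion-induced map $C_q(Y_{a_0};M) \to C_q(Y_{a_1};M)$.

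To identify the abutment, I would run the spectral sequence that takes horizontal homology first. Fix a cell $e$ of $Y$ of dimension $q$ and set $\mathsf{P}_e := \{a \in \mathsf{P} : e \subseteq Y_a\}$. Upward-closedness of $\mathsf{P}_e$ is immediate from $Y_a \subseteq Y_b$ for $a \leq b$, so the contribution of $e$ to $E^0_{\bullet, q}$ is canonically identified with $C_\bullet(\Flag(\mathsf{P}_e); M)$. Because $\mathcal{Y}$ is closed under finite nonempty intersections, $\mathsf{P}_e$ has a minimum element, so $\Flag(\mathsf{P}_e)$ is a cone and hence contractible. Therefore the horizontal homology vanishes outside $p = 0$, where it recovers $C_q(Y;M)$, and the total complex computes $H_\bullet(Y;M)$. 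The opposite spectral sequence first takes vertical homology, yielding
\[
    E^1_{p,q} = \bigoplus_{\sigma \in \Flag(\mathsf{P})^{(p)}} H_q(Y_{\min(\sigma)};M) = C_p(\Flag(\mathsf{P}); \mathcal{H}_q(\mathcal{Y};M)),
\]
so that $E^2_{p,q} = H_p(\Flag(\mathsf{P}); \mathcal{H}_q(\mathcal{Y};M))$, giving the first claim.

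For the moreover statement, observe that in the differential on $C_\bullet(\Flag(\mathsf{P}); \mathcal{H}_q(\mathcal{Y};M))$ the only contributions that change $\min(\sigma)$ come from the $0$-th face and are induced by the maps $H_q(Y_{a_0};M) \to H_q(Y_{a_1};M)$, which vanish by hypothesis. Consequently the complex splits as a direct sum over $a \in \mathsf{P}$ indexed by the value of $\min$. The simplices with $\min(\sigma) = a$ are exactly those in $\Flag(\mathsf{P}_{\geq a})$ that contain $a$, i.e.\ the generators of the relative chain complex $C_\bullet(\Flag(\mathsf{P}_{\geq a}), \Flag(\mathsf{P}_{>a}))$ with coefficients $H_q(Y_a;M)$. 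Passing to homology gives the asserted formula. The main subtlety I anticipate is the sign bookkeeping needed to assemble the cellular and simplicial differentials into an honest bicomplex and to verify the identifications above; once that is pinned down, both spectral sequence calculations are formal.
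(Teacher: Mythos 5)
Your argument is correct and reproduces the standard double-complex derivation of the Mayer--Vietoris spectral sequence, which is exactly the approach of the cited reference Davis--Okun \cite{DavisOkun2012}; the paper itself states the lemma with that citation and gives no independent proof. The one place to be careful is the claim that $\mathsf{P}_e$ has a minimum: this needs ``closed under finite nonempty intersections'' to be read in the strong sense that $Y_a\cap Y_b=Y_c$ for some $c\leqslant a,\,b$ (so that the poset order reflects inclusions), which is how Davis--Okun phrase the definition and which holds in every application in the paper.
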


\section{Computations} \label{sec:algcomps}

\subsection{Approximation for spaces with confident covers}\label{subsec:confident}

Fix a skew field $\mathbb F$ and let $X$ be a compact CW complex with a finite poset of spaces $\mathcal X = \{X_\alpha\}_{\alpha \in \mathsf P}$ over $\mathsf P$. We call $\mathcal X$ \textit{confident} if it satisfies the following conditions: for each $\alpha \in \mathsf P$
\begin{enumerate}[label = (\roman*)]
    \item each $X_\alpha$ has finitely many components; 
    \item\label{item:amen} either $X_\alpha \subseteq X^{(0)}$ or each connected component of $X_\alpha$ is a classifying space with torsion-free amenable fundamental group such that $\mathbb F G$ has no zero-divisors;
    \item if $C \subseteq X_\alpha$ is a component, then the inclusion $C \subseteq X$ induces an injection $\pi_1(C) \rightarrow \pi_1(X)$;
    \item\label{item:separation} if $X_\alpha$ is a collection of points, then $\alpha$ is minimal in $\mathsf P$; equivalently, $X_\alpha \cap X_\beta = \varnothing$ whenever $X_\alpha, X_\beta \subseteq X^{(0)}$.
\end{enumerate}

\begin{rem}
    If $G$ is a torsion-free elementary amenable group, then $\mathbb F G$ has no zero-divisors.
\end{rem}

We will show that spaces with confident covers satisfy L\"uck's approximation theorem in arbitrary characteristic, following Avramidi, Okun, and Schreve who prove the same result for the Salvetti complex of a RAAG. We will then show that the result agrees with the agrarian Betti numbers of the space.

Before beginning, we fix some notation. For the rest of the section, $X$ will be a compact CW complex with a confident cover $\mathcal X = \{X_\alpha\}_{\alpha\in\mathsf P}$ and we assume that $G := \pi_1(X)$ is residually finite with residual chain $(G_n)_{n \in \N}$. Let $X_n$ be the covering space of $X$ corresponding to $G_n \trianglelefteqslant G$. Fix some $n \in \N$ and let $V = \mathbb F[G/G_n]$. Let $K$ be the nerve of $\mathcal X$ and let $X_\sigma = X_{\min(\sigma)}$ for any simplex $\sigma \in K$. By an abuse of notation, we will use $\alpha$ to denote both an element of $\mathsf P$ and the corresponding vertex of $K$. 

Recall the Mayer--Vietoris type spectral sequence 
\[
    E_{p,q}^1 = C_p(K; H_q(X_\sigma; V)) \Rightarrow H_{p+q}(X; V) \cong H_{p+q}(X_n; \mathbb F)
\]
(see, e.g., \cite[VII.4]{BrownGroupCohomology}).

\begin{lem}\label{lem:growth}
    We have that
    \[
        \lim_{n \to \infty} \frac{\dim_\mathbb F H_q (X_\sigma;  V)}{[G : G_n]} = \begin{cases}
            n_\sigma & \text{if} \ q = 0 \ \text{and} \ X_\sigma \subseteq X^{(0)}, \\
            0 & \text{otherwise}.
        \end{cases}
    \]
\end{lem}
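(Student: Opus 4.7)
My plan is to decompose $X_\sigma$ into its (finitely many) connected components and, on each component, to express $H_q(C; V)$ as a direct sum of mod-$\FF$ homology groups of a finite-index subgroup of $\pi_1(C)$, after which the claim reduces to a L\"uck-type approximation theorem in positive characteristic for amenable groups. Concretely, write $X_\sigma = \bigsqcup_{i} C_i$ and fix a component $C$ with $H := \pi_1(C)$, which injects into $G$ by hypothesis~(iii). Since $G_n$ is normal in $G$, the $H$-stabiliser of every point of $G/G_n$ equals $H \cap G_n$, so the orbit decomposition of $V|_{H}$ reads
\[
V|_{H} \;\cong\; \FF[H/(H\cap G_n)]^{\oplus [G : H G_n]}.
\]
Since $C$ is either a point or a $K(H,1)$, Shapiro's lemma gives $H_q(C; V) \cong H_q(H \cap G_n;\, \FF)^{\oplus [G : H G_n]}$, and dividing by $[G:G_n] = [G:HG_n]\cdot[H:H\cap G_n]$ yields
\[
\frac{\dim_{\FF} H_q(C; V)}{[G:G_n]} \;=\; \frac{b_q(H \cap G_n;\, \FF)}{[H:H \cap G_n]}.
\]
Here $(H\cap G_n)_{n\in\N}$ is automatically a residual chain of finite-index normal subgroups of $H$.

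If $X_\sigma \subseteq X^{(0)}$, then each component is a point and $H = 1$, so the displayed quotient equals $1$ for $q=0$ and $0$ for $q>0$; summing over the $n_\sigma$ components yields the stated value. Otherwise, hypothesis~(ii) forces each $H$ to be a (necessarily infinite) torsion-free amenable group with $\FF H$ a domain. I would then invoke the mod-$p$ approximation theorem for amenable groups of Linnell--L\"uck--Sauer~\cite{LinnellLuckSauer} to identify the limit on each component with the agrarian Betti number $b_q^{\mc D_{\FF H}}(H)$, where $\mc D_{\FF H} = \Ore(\FF H)$ is the classical Ore localisation (which exists because $\FF H$ is an amenable domain). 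A short flatness argument---observing that for any $1 \neq h \in H$ the element $1-h$ is invertible in $\mc D_{\FF H}$, so $\mc D_{\FF H} \otimes_{\FF H} \FF = 0$, combined with the flatness of the Ore localisation---then shows that $b_q^{\mc D_{\FF H}}(H) = 0$ for every $q$, and summing over components gives~$0$.

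The main obstacle is invoking mod-$p$ amenable L\"uck approximation in the generality required here, namely for classifying spaces of arbitrary dimension subject only to the constraint that $\FF H$ be a domain; once this is in place the rest of the argument is routine covering-space bookkeeping. A small additional point to verify is that hypotheses~(ii) and~(iv) together preclude a contractible component sitting inside an $X_\sigma \not\subseteq X^{(0)}$, as such a component would contribute $1$ in degree $q=0$ and violate the stated formula; this should follow from the requirement that each $X_\alpha$ is uniformly either $0$-dimensional or built out of positive-dimensional pieces.
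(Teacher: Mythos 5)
Your proof is correct and takes essentially the same route as the paper's, differing mainly in that you make the covering-space bookkeeping fully explicit via the orbit decomposition of $V|_H$ and Shapiro's lemma, which cleanly exhibits the cancellation
\[
\frac{\dim_\FF H_q(C;V)}{[G:G_n]}=\frac{b_q(H\cap G_n;\FF)}{[H:H\cap G_n]}
\]
that the paper instead describes informally (``the number of components of the preimage of $X_\sigma$ in $X_n$ grows sublinearly''). Both arguments then reduce to the Linnell--L\"uck--Sauer approximation theorem for amenable groups.

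Two points of comparison. First, your worry about whether Linnell--L\"uck--Sauer applies ``in the generality required'' is unfounded: their Theorem~0.2 covers free $G$-CW-complexes of finite type for amenable $G$ with $\FF G$ a domain, and since $X$ is compact each component $C$ is a compact $K(H,1)$, so $H$ is of type $\F$ and the theorem applies directly. Your supplementary flatness argument ($\mc D=\Ore(\FF H)$ is flat and kills the augmentation module) is a tidy way to see that the limiting Ore-dimension vanishes, and is exactly what is hiding behind the paper's one-line citation. Second, the issue you flag at the end is real and applies equally to the paper's own proof: conditions~(ii) and~(iv) as literally stated do not forbid a contractible positive-dimensional component with trivial fundamental group, which would contribute a nonzero constant in degree~$0$ and break the sublinearity. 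The paper's proof explicitly writes ``$\pi_1$-injections of \emph{infinite} groups,'' revealing the same implicit assumption; the intended reading of~(ii) is that the component fundamental groups are infinite, as they are in all the applications (Artin kernels, graph products, Deligne complexes). So you have not introduced a gap---you have noticed one that was already there and that the authors paper over by assumption.
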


\begin{proof}
    The proof is similar to that of \cite[Lemma 8]{AvramidiOkunSchreve2021}. The claim is clear when $X_\sigma$ consists of $0$-cells. In the other case, since the homology growth of amenable groups satisfying \ref{item:amen} is sublinear \cite[Theorem 0.2]{LinnellLuckSauer}, the only way $\dim_\mathbb F H_q (X_\sigma; V)$ can grow linearly is if the number of components of the preimage of $X_\sigma$ in $X_n$ grows linearly with the index. But this does not occur since the sequence $\Gamma_n$ is residual and normal and the inclusions $X_\sigma \subseteq X$ induce $\pi_1$-injections of infinite groups on each component of $X_\alpha$. \qedhere
\end{proof}

The spectral sequence is therefore concentrated on the $E_{p,0}^1$ line, up to an error sublinear in the index $[G:G_n]$. This implies that
\begin{equation}\label{eq:limsup1}
    \limsup_{n \in \N} \frac{\dim_\mathbb F E_{p,0}^2}{[G:G_n]} = \limsup_{n \in \N} \frac{\dim_\mathbb F H_p(X;V)}{[G:G_n]}.
\end{equation}
Define a poset of coefficients on the vertices of $K$ by 
\[
    \mathcal A_\alpha = \begin{cases}
        V^{n_\sigma} & \text{if} \ X_\sigma \subseteq X^{(0)}, \\
        0 & \text{otherwise}.
    \end{cases}
\]
Let $K^{(p)}$ denote the set of $p$-simplices of $K$. There is a chain projection 
\[
    E_{p,0}^1 = \bigoplus_{\sigma \in K^{(p)}} V^{n_\sigma} \rightarrow C_p(K; \mathcal A_\alpha) = \bigoplus_{\sigma \in K^{(p)} : X_{\min(\sigma)} \subseteq X^{(0)}} V^{n_\sigma}.
\]
By \cref{lem:growth}, the kernel of this projection has dimension sublinear in the index $[G:G_n]$ and therefore
\begin{equation}\label{eq:limsup2}
    \limsup_{n \in \N} \frac{\dim_\mathbb F E_{p,0}^2}{[G:G_n]} = \limsup_{n \in \N} \frac{\dim_\mathbb F H_p(X;\mathcal A_\alpha)}{[G:G_n]}.
\end{equation}

The proof of the following proposition is similar to that of \cite[Lemma 9]{AvramidiOkunSchreve2021}, except that in their case the nerve is contractible. Though $K$ is not necessarily contractible, it does decompose nicely into contractible pieces centred at the vertices $\alpha \in K^{(0)}$ such that $X_\alpha \subseteq X^{(0)}$.

\begin{prop}\label{prop:nerveHomology}
    Let $S = \{\alpha \in K^{(0)} : X_\alpha \subseteq X^{(0)}\}$. Then 
    \[
        \dim_\mathbb F H_p(K; \mathcal A_\sigma) = [G:G_n] \cdot \sum_{\alpha \in S} n_\alpha \widetilde{b}_{p-1}(\lk(\alpha); \mathbb F)
    \]
    In particular, $\lim_{n \to \infty} \frac{b_p(X_n; \mathbb F)}{[G:G_n]}$ exists and is independent of the residual chain $(G_n)$.
\end{prop}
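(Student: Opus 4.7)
The plan is to exploit condition \ref{item:separation}---which forces every $\alpha \in S$ to be minimal in $\mathsf{P}$---together with the fact that the coefficient system $\mathcal{A}$ vanishes on every simplex whose minimum vertex lies outside $S$.

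The first step is to decompose $C_\bullet(K; \mathcal{A})$ as a direct sum of chain subcomplexes indexed by $\alpha \in S$, where the $\alpha$-summand $C_\bullet^\alpha$ is spanned by the simplices $\sigma \in K$ with $\min \sigma = \alpha$. The only delicate point is that the simplicial boundary preserves this decomposition. For $\sigma \in C_\bullet^\alpha$, removing any vertex other than $\alpha$ leaves a face whose minimum is still $\alpha$, while removing $\alpha$ itself produces a face $\tau$ with $\min \tau > \alpha$; since $\alpha$ is minimal in $\mathsf{P}$, the vertex $\min \tau$ cannot lie in $S$, so $\mathcal{A}_{\min \tau} = 0$ and that face contributes nothing to the boundary.

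Next, I would identify $C_\bullet^\alpha$ with a shift of the augmented simplicial chain complex of $\lk(\alpha)$. The assignment $\sigma \mapsto \sigma \setminus \{\alpha\}$ is a bijection between $p$-simplices of $K$ with minimum $\alpha$ and $(p-1)$-simplices of $\lk(\alpha)$, with $p=0$ corresponding to the empty simplex (the augmentation). Since the coefficient is the constant module $V^{n_\alpha}$ throughout this summand, one obtains an isomorphism of chain complexes
\[
    C_\bullet^\alpha \;\cong\; V^{n_\alpha} \otimes_{\mathbb{F}} \widetilde{C}_{\bullet - 1}(\lk(\alpha); \mathbb{F}),
\]
so $H_p(C_\bullet^\alpha) \cong V^{n_\alpha} \otimes_{\mathbb{F}} \widetilde{H}_{p-1}(\lk(\alpha); \mathbb{F})$, which has $\mathbb{F}$-dimension $[G:G_n] \cdot n_\alpha \cdot \widetilde{b}_{p-1}(\lk(\alpha); \mathbb{F})$. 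Summing over $\alpha \in S$ yields the stated formula. The ``in particular'' clause then follows from \eqref{eq:limsup1} and \eqref{eq:limsup2}: the $\limsup$ equals $\sum_{\alpha \in S} n_\alpha \widetilde{b}_{p-1}(\lk(\alpha); \mathbb{F})$, which depends on neither $n$ nor the residual chain, so it is in fact a limit with the same independence.

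The main obstacle is verifying compatibility of boundary maps and signs when identifying $C_\bullet^\alpha$ with the shifted reduced chain complex of $\lk(\alpha)$; this reduces to a routine simplicial calculation once minimality of $\alpha$ has been used to discard the face in which $\alpha$ itself is removed.
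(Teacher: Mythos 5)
Your proof is correct and follows essentially the same route as the paper's: both use condition \ref{item:separation} to split $C_\bullet(K;\mathcal A)$ into summands indexed by $\alpha\in S$ and then reduce each summand to the reduced homology of $\lk(\alpha)$ shifted by one degree. The only presentational difference is that the paper extracts $H_\bullet(\st(\alpha);\mathcal A_\sigma)\cong \widetilde H_{\bullet-1}(\lk(\alpha);\FF)\otimes V^{n_\alpha}$ from a short exact sequence of chain complexes and the contractibility of $\st(\alpha)$, whereas you identify the $\alpha$-summand directly with $V^{n_\alpha}\otimes_{\FF}\widetilde C_{\bullet-1}(\lk(\alpha);\FF)$ via $\sigma\mapsto\sigma\smallsetminus\{\alpha\}$; these are two phrasings of the same cone observation.
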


\begin{proof}
    By \ref{item:separation}, if $\alpha \in K^{(0)}$ is a vertex such that $\mathcal A_\alpha \neq 0$, then every vertex $\beta \in K^{(0)}$ adjacent to $\alpha$ has $\mathcal A_\beta = 0$. Therefore the chain complex $C_\bullet (K; \mathcal A_\sigma)$ decomposes as a direct sum of chain complexes $\bigoplus_{\alpha \in S} C_\bullet (\st(\alpha); \mathcal A_\sigma)$, where the coefficient system $\mathcal A_\sigma$ is restricted to each $\st(\alpha) \subseteq K$.

    For each $\alpha \in S$, there is a short exact sequence of chain complexes
    \[
        0 \rightarrow C_\bullet(\lk(\alpha); \mathbb F) \otimes V^{n_\alpha} \rightarrow C_\bullet (\st(\alpha); \mathbb F) \otimes V^{n_\alpha} \rightarrow C_\bullet (\st(\alpha); \mathcal A_\sigma) \rightarrow 0.
    \]
    Because $\st(\alpha)$ is contractible, the middle term is acyclic and therefore
    \[
        H_\bullet(\st(\alpha); \mathcal A_\sigma) \cong H_{\bullet-1}(\lk(\alpha); \mathbb F) \otimes V^{n_\alpha}.
    \]
    The formula in the statement of the proposition follows, since $V^{n_\alpha}$ is a vector space of dimension $[G:G_n] \cdot n_\alpha$. 
    
    This formula together with \eqref{eq:limsup1} and \eqref{eq:limsup2} show that $\limsup_n \frac{b_p(X_n; \mathbb F)}{[G:G_n]}$ is independent of the residual chain $(G_n)$. Moreover, this implies that the $\limsup$ is a genuine limit. \qedhere
\end{proof}

\begin{rem}
    In \cite{AvramidiOkunSchreve2021}, the computation of \cref{prop:nerveHomology} is carried out in the case that $X_L$ is the Salvetti complex of the RAAG determined by $L$. If $\mathcal X$ is the cover of $X_L$ by standard tori, then there is a single vertex $v$ in the cover and the corresponding vertex $\alpha$ in the nerve has link isomorphic to $L$. Thus, we recover the formula $b_p^{(2)}(A_L; \mathbb F) = \widetilde{b}_{p-1}(L;\mathbb F)$.
\end{rem}

\begin{rem}\label{rem gradient spaces}
    Condition~\ref{item:amen} can be weakened as follows:  One only requires that first, $\pi_1X$ is residually finite; and second, that each $X_\sigma$ is (homotopy equivalent to) a compact CW complex with vanishing $\FF$-$\ell^2$-Betti numbers independent of the chain, or is a $0$-cell.  In this case the conclusion of \Cref{prop:nerveHomology} still holds.
\end{rem}

\begin{cor}\label{cor:FIconfident}
    Let $X$ be a confident CW complex with $\pi_1(X)$ residually finite. If $\pi \colon \widehat{X} \rightarrow X$ is a degree $d$ cover, then $b_p^{(2)}(H; \mathbb F) = d \cdot b_p^{(2)}(G; \mathbb F)$.
\end{cor}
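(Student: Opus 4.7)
The strategy is to pull back the confident cover of $X$ along $\pi$ and apply \Cref{prop:nerveHomology} to both spaces. Let $\mathcal X = \{X_\alpha\}_{\alpha \in \mathsf P}$ be a confident cover of $X$ and define $\hat X_\alpha := \pi^{-1}(X_\alpha)$ for each $\alpha \in \mathsf P$. Since taking preimages under $\pi$ preserves inclusions and intersections, $\hat{\mathcal X} = \{\hat X_\alpha\}_{\alpha \in \mathsf P}$ is a poset of spaces in $\hat X$ indexed by the same poset $\mathsf P$.

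The first substantive step is to verify that $\hat{\mathcal X}$ is itself a confident cover of $\hat X$. Condition (i) is clear since each $\hat X_\alpha$ is a degree $d$ cover of $X_\alpha$, and so has at most $d$ times as many components. For (ii), if $X_\alpha \subseteq X^{(0)}$ then $\hat X_\alpha \subseteq \hat X^{(0)}$; otherwise, each component of $\hat X_\alpha$ is a finite cover of a component of $X_\alpha$ and is therefore a classifying space for a finite-index subgroup of the corresponding torsion-free amenable group, while the zero-divisor condition on the group ring of its fundamental group is inherited since the smaller group ring embeds into the larger one. For (iii), each component of $\hat X_\alpha$ is $\pi_1$-injective in $\hat X$, since the composition with $\pi$ is the $\pi_1$-injective inclusion of the corresponding component of $X_\alpha$ into $X$, and this factors through $H \leqslant G$. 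Condition (iv) holds because $\hat X_\alpha \subseteq \hat X^{(0)}$ iff $X_\alpha \subseteq X^{(0)}$, and $\pi^{-1}$ preserves disjointness of point collections.

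Next, the nerve $\hat K$ of $\hat{\mathcal X}$ coincides with the nerve $K$ of $\mathcal X$, since
\[
    \hat X_{\alpha_0} \cap \cdots \cap \hat X_{\alpha_k} = \pi^{-1}(X_{\alpha_0} \cap \cdots \cap X_{\alpha_k})
\]
is nonempty exactly when the original intersection is. Consequently, the set $\hat S$ of vertices of $\hat K$ with $\hat X_\alpha \subseteq \hat X^{(0)}$ equals $S$, and the links in $\hat K$ agree with those in $K$. The only quantity that changes in the formula of \Cref{prop:nerveHomology} is the multiplicity $n_\alpha = |X_\alpha|$, which becomes $|\hat X_\alpha| = d \cdot n_\alpha$ because each point of $X$ has exactly $d$ preimages under $\pi$. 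Applying \Cref{prop:nerveHomology} to $\hat X$ with the cover $\hat{\mathcal X}$ therefore yields
\[
    b_p^{(2)}(H; \mathbb F) = \sum_{\alpha \in S} (d \cdot n_\alpha)\, \widetilde b_{p-1}(\lk(\alpha); \mathbb F) = d \cdot b_p^{(2)}(G; \mathbb F),
\]
as desired.

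The main obstacle is really just the bookkeeping in checking that $\hat{\mathcal X}$ is confident; nothing deeper is required since the approximation formula in \Cref{prop:nerveHomology} does the computational heavy lifting, and the relevant algebraic properties (torsion-freeness, amenability, absence of zero divisors in the group ring) pass routinely to finite-index subgroups.
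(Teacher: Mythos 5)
Your proof is correct and takes essentially the same route as the paper: the paper's own proof is the one-line observation that $\{\pi^{-1}(X_\alpha)\}$ is a confident cover of $\widehat X$ with $\lvert\pi^{-1}(X_\alpha)\rvert = d\cdot\lvert X_\alpha\rvert$ when $X_\alpha\subseteq X^{(0)}$, which you have simply unpacked by verifying conditions (i)--(iv) explicitly and tracking the nerve and multiplicities through \Cref{prop:nerveHomology}.
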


\begin{proof}
    Let $\{X_\alpha\}$ be a confident cover of $X$. Then $\{ \pi\inv(X_\alpha) \}$ is a confident cover of $\widehat{X}$ and $\abs{\pi\inv(X_\alpha)} = d \cdot \abs{X_\alpha}$ whenever $X_\alpha \subseteq X^{(0)}$. \qedhere
\end{proof}

\subsection{Agrarian homology of spaces with confident covers}\label{subsec:agrarianConfidence}

We continue with the same set-up as the previous subsection: $X$ is a CW complex with a confident cover $\mathcal X = \{X_\alpha\}_{\alpha \in \mathsf P}$ and let $K$ be the covering. Notice that $K \cong \operatorname{Flag}(\mathsf P)$. Additionally, we will assume that there exists a skew field $\mathcal D$ and a fixed agrarian embedding $\mathbb FG \rightarrow \mathcal D$, where $G = \pi_1(X)$.

\begin{prop}\label{prop:nerveAgrarian}
    Let $S = \{ \alpha \in K^{(0)} : X_\alpha \subseteq X^{(0)} \}$. Then
    \[
        \dim_\mathbb F H_p^\mathcal D(X) = \sum_{\alpha \in S} n_\alpha \widetilde{b}_{p-1} (\lk(\alpha); \mathbb F).
    \]
    In particular, if $\pi_1(X)$ is residually finite then $b_p^\mathcal{D}(X) = b_p^{(2)} (X; \mathbb F)$.
\end{prop}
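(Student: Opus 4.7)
The plan is to run the Mayer--Vietoris spectral sequence of \cref{lem:spec} on $X$ with the confident cover $\mathcal X$, but taking coefficients in $\mathcal D$ regarded as an $\mathbb F G$-module via the fixed agrarian embedding $\mathbb F G \hookrightarrow \mathcal D$. This produces a spectral sequence
\[
    E^2_{p,q} \Rightarrow H_{p+q}^{\mathcal D}(X)
\]
whose $E^2$-page is built from the local agrarian homologies $H_q^\mathcal D(X_\alpha)$. The argument will then parallel the proof of \cref{prop:nerveHomology}, with the role of $V = \mathbb F[G/G_n]$ replaced throughout by $\mathcal D$.

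The key step is computing the local homologies. For $\alpha \in S$, the preimage $\widetilde X_\alpha \subseteq \widetilde X$ is a disjoint union of $n_\alpha$ free $G$-orbits of vertices, so $H_0^\mathcal D(X_\alpha) = \mathcal D^{n_\alpha}$ and $H_q^\mathcal D(X_\alpha) = 0$ for $q > 0$. For every other $\alpha$, each component $C$ of $X_\alpha$ is a $K(H_C, 1)$ with $H_C$ infinite, torsion-free, amenable, and $\mathbb F H_C$ a domain; by (iii), $H_C \hookrightarrow G$. Tamari's theorem then gives that $\mathbb F H_C$ is Ore, so the composition $\mathbb F H_C \hookrightarrow \mathbb F G \hookrightarrow \mathcal D$ factors uniquely through the Ore skew field $\mathcal Q(\mathbb F H_C)$, exhibiting $\mathcal D$ as flat over $\mathbb F H_C$. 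The standard vanishing of agrarian Betti numbers for infinite amenable groups (cf.\ \cite{HennekeKielak2021}) then yields $H_q^\mathcal D(X_\alpha) = \bigoplus_C H_q(H_C; \mathcal D) = 0$ for all $q$.

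With these computations in hand, the inclusion-induced maps $H_q^\mathcal D(X_\alpha) \to H_q^\mathcal D(X_\beta)$ vanish whenever $\alpha < \beta$: by (iv), at least one of $\alpha,\beta$ lies outside $S$, so the corresponding module is already zero. Hence the simplified $E^2$ of \cref{lem:spec} applies, and only indices $\alpha \in S$, $q = 0$ contribute. For such minimal $\alpha$ the pair $(\Flag(\mathsf P_{\geq \alpha}), \Flag(\mathsf P_{> \alpha}))$ coincides with $(\st(\alpha), \lk(\alpha))$ in $K$, and since $\st(\alpha)$ is contractible the long exact sequence of the pair gives
\[
    E^2_{p,0} = \bigoplus_{\alpha \in S} \widetilde H_{p-1}(\lk(\alpha); \mathbb F) \otimes_{\mathbb F} \mathcal D^{n_\alpha},
\]
with all other $E^2_{p,q} = 0$. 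The spectral sequence therefore collapses, and taking $\mathcal D$-dimensions yields the displayed formula. The ``in particular'' is then immediate by comparison with \cref{prop:nerveHomology}.

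The main obstacle I anticipate is the vanishing step for amenable local homology: one must combine Tamari's Ore theorem with a vanishing (or dimension-flatness) statement for the Ore skew field of $\mathbb F H_C$ over $\mathbb F H_C$ for an arbitrary ground skew field $\mathbb F$, valid in positive characteristic. Once this is established, the remainder of the proof is a direct transcription of the mod-$p$ argument of \cref{prop:nerveHomology}.
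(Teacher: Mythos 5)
Your proof is correct and follows essentially the same route as the paper's: run the spectral sequence of \cref{lem:spec} with $\mathcal D$-coefficients, observe that the amenable pieces are $\mathcal D$-acyclic by \cite[Theorem~3.9(6)]{HennekeKielak2021}, note that the minimality condition~(iv) forces all inclusion-induced maps on local homology to vanish so the simplified $E^2$ formula applies, and then identify the resulting pairs $(\Flag(\mathsf P_{\geqslant\alpha}),\Flag(\mathsf P_{>\alpha}))$ with $(\st(\alpha),\lk(\alpha))$. The only point worth flagging is that your Tamari/Ore flatness detour only kills $H_q$ for $q>0$ and you still need a separate observation (e.g.\ that $g-1$ becomes invertible in $\mathcal D$) to get $H_0=0$; the citation to \cite{HennekeKielak2021} already supplies the full vanishing, so the Ore argument is unnecessary scaffolding rather than an error.
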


\begin{proof}
    Suppose $X_\alpha$ does not consist of $0$-cells. Then, each component of $X_\alpha$ is a classifying space for an infinite amenable group and therefore $H_p^\mathcal D(X_\alpha) = 0$ by \cite[Theorem 3.9(6)]{HennekeKielak2021}. Since the elements $\alpha$ such that $X_\alpha \subseteq X^{(0)}$ are minimal in $\mathsf P$, the spectral sequence of \cref{lem:spec} collapses on the $E_{p,0}^2$ line. By \cref{lem:spec},
    \[
        H_p^\mathcal D(X) = \bigoplus_{\alpha \in S} H_p(\Flag(\mathsf P_{\geqslant \alpha}), \Flag(\mathsf P_{>\alpha}); \mathcal D^{n_\alpha}),
    \]
    whence the stated formula follows.
\end{proof}

\begin{cor}\label{cor:virtAgrarian}
    If $X$ is confident, $G = \pi_1(X)$ is residually finite, and $G$ has a Hughes-free division ring $\DF{G}$, then $b_p^{\DF{H}} = b_p^{(2)}(\widehat X; \mathbb F)$ for every finite index subgroup $H \leqslant G$ and corresponding finite covering space $\widehat X \rightarrow X$.
\end{cor}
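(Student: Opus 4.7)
The plan is to deduce \cref{cor:virtAgrarian} by showing that $\widehat{X}$ inherits a confident cover from $X$ and then applying \cref{prop:nerveAgrarian} directly to $\widehat{X}$.

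First I would verify that if $\pi\colon\widehat{X}\to X$ is the finite cover corresponding to $H\leqslant G$, then the collection $\{C : C \text{ a connected component of } \pi^{-1}(X_\alpha),\ \alpha \in \mathsf P\}$, suitably re-indexed by its natural poset, is a confident cover of $\widehat{X}$. Concretely: (i) each $\pi^{-1}(X_\alpha)$ has only finitely many components because the cover has finite degree; (ii) if $X_\alpha\subseteq X^{(0)}$ then $\pi^{-1}(X_\alpha)\subseteq\widehat{X}^{(0)}$, so the $0$-cell separation condition \ref{item:separation} transfers; (iii) otherwise each component $\widehat{C}$ of $\pi^{-1}(X_\alpha)$ is a finite cover of a component $C$ of $X_\alpha$, hence is a classifying space whose fundamental group is a finite-index subgroup of the torsion-free amenable group $\pi_1(C)$ — and finite-index subgroups of torsion-free amenable groups are again torsion-free amenable, with $\mathbb F\pi_1(\widehat C)$ being a subring of $\mathbb F\pi_1(C)$ and hence having no zero divisors; (iv) the inclusion $\widehat{C}\hookrightarrow\widehat{X}$ is $\pi_1$-injective because $\pi_1(\widehat{C})$ is (up to conjugation) $\pi_1(C)\cap H$ as a subgroup of $G$, and $\pi_1(C)\hookrightarrow G$ by the confidence of $\mathcal X$.

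Next I would note that the hypotheses of \cref{prop:nerveAgrarian} are met for $\widehat{X}$: the group $H=\pi_1(\widehat X)$ is residually finite as a subgroup of $G$, and the Hughes-free division ring $\DF{H}$ exists and can be realised as the division closure of $\mathbb F H$ inside $\DF{G}$ (so in particular the agrarian embedding $\mathbb F H\hookrightarrow\DF{H}$ is available). Applying \cref{prop:nerveAgrarian} to $\widehat X$ with agrarian embedding $\mathbb F H\hookrightarrow \DF{H}$ yields
\[
    b_p^{\DF{H}}(\widehat{X}) = b_p^{(2)}(\widehat{X};\mathbb F),
\]
which is the desired equality. (As a consistency check, one could combine \cref{prop:nerveAgrarian} for $X$ with \cref{cor:FIconfident} and the multiplicativity of agrarian Betti numbers under finite-index subgroups to recover the same conclusion, but the direct route above is cleaner.)

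The one non-cosmetic step is item (iii) above: one must know that the new stabiliser groups $\pi_1(\widehat C)$ still satisfy condition~\ref{item:amen}, i.e.\ that $\mathbb F \pi_1(\widehat C)$ has no zero divisors. This is immediate from the embedding into $\mathbb F \pi_1(C)$, so in fact no genuine obstacle appears; the proof is essentially a bookkeeping exercise reducing to \cref{prop:nerveAgrarian} applied to the cover.
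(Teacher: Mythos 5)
Your proof is correct, and it does take a mildly different route from the paper's. The paper dispatches the corollary in one line: it applies \cref{prop:nerveAgrarian} to $X$ itself to get $b_p^{\DF{G}}(X) = b_p^{(2)}(X;\FF)$, then multiplies both sides by $[G:H]$, invoking \cref{cor:FIconfident} for the scaling of the $\FF$-homology gradient and \cite[Lemma~6.3]{Fisher2021improved} for the scaling of the Hughes-free Betti number. You instead reprove the key fact underlying \cref{cor:FIconfident} (that $\{\pi^{-1}(X_\alpha)\}_{\alpha\in\mathsf P}$ is a confident cover of $\widehat X$), then apply \cref{prop:nerveAgrarian} directly to $\widehat X$ with the agrarian embedding $\FF H \hookrightarrow \DF{H}$. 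Your route is more self-contained in that it avoids the scaling lemma for agrarian Betti numbers, at the cost of re-deriving what \cref{cor:FIconfident} already gives; the paper's route is shorter since \cref{cor:FIconfident} is already in hand and the scaling lemma is a black box. Two small remarks: (1) it is cleaner to take the confident cover of $\widehat X$ to be $\{\pi^{-1}(X_\alpha)\}_{\alpha\in\mathsf P}$ itself, indexed by the same poset $\mathsf P$, rather than passing to individual connected components — recall that members of a confident cover are explicitly allowed to be disconnected with finitely many components, and re-indexing by components would require one to reconstruct the poset and re-verify closure under intersections; (2) your justification that $\DF{H}$ exists as the division closure of $\FF H$ in $\DF{G}$ is exactly the right fact (it is a direct consequence of Hughes' uniqueness theorem for Hughes-free division rings), and it is the step that lets you bypass the external scaling lemma.
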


\begin{proof}
    This follows from \cref{cor:FIconfident} and the fact that Hughes-free Betti numbers scale when passing to a finite index subgroup \cite[Lemma 6.3]{Fisher2021improved}. \qedhere
\end{proof}

\begin{rem}\label{rem agrarian spaces}
    Condition~\ref{item:amen} can be weakened as follows:  One only requires that first $\pi_1X$ admits an agrarian map $\pi_1 X\to \mc D$; and second that each $X_\sigma$ is (homotopy equivalent to) a compact CW complex with with vanishing $\mc D$-Betti numbers or is a $0$-cell.  In this case the conclusion of \Cref{prop:nerveAgrarian} still holds.
\end{rem}

\subsection{Artin kernels}

An Artin kernel is simply the kernel of a non-zero homomorphism $\varphi \colon A_L \rightarrow \Z$, where $L$ is a flag complex and $A_L$ is the RAAG it determines. We now apply the results of the previous two subsections to Artin kernels and obtain an explicit formula for their agrarian Betti numbers in terms of $L$ and $\varphi$. 

We fix a skew field $\mathbb{F}$, a flag complex $L$, and a surjective homomorphism $\varphi \colon A_L \rightarrow \Z$, where $A_L$ is the RAAG on $L$. Moreover, we fix a standard generating set for $A_L$, identified with the vertex set $\operatorname{Vert}(L)$. We denote the Artin kernel by $BB_L^\varphi := \ker \varphi$. If $\varphi$ is the map sending each of the generators to $1 \in \Z$, then $BB_L^\varphi = BB_L$ is the usual Bestvina--Brady group.

Let $T_L$ be the Salvetti complex on $L$  and let $X_L$ be its universal cover. There is an  affine map $T_L \rightarrow S^1$ inducing $\varphi$ constructed as follows. For each $v \in \operatorname{Vert}(L)$, let $S^1_v := \R/\Z$ be the corresponding circle in $T_L$. Let $\sigma = \{v_1, \dots, v_k\} \in L$ be a simplex and let $T_\sigma = S_{v_1}^1 \times \cdots \times S_{v_k}^1$ be the associated subtorus of $T_L$. There is a map
\[
    T_\sigma \rightarrow S^1 = \R/\Z, \quad (x_1, \dots, x_k) \longmapsto \varphi(v_1) x_1 + \cdots + \varphi(v_k) x_k + \Z.
\]
The maps on each of the subtori extend to a well-defined map $f \colon T_L \rightarrow S^1$ inducing $\varphi$ on the level of fundamental groups. Moreover, $f$ induces a cube-wise affine $A_L$-equivariant height function $h \colon X_L \rightarrow \R$ making the diagram
\[\begin{tikzcd}
	X_L & \R \\
	{T_L} & {S^1}
	\arrow[from=1-1, to=2-1]
	\arrow["f", from=2-1, to=2-2]
	\arrow["h", from=1-1, to=1-2]
	\arrow[from=1-2, to=2-2]
\end{tikzcd}\]
commute --- here the vertical arrows are the universal covering maps.

We borrow the following definition and terminology from \cite{BuxGonzalez1999}.

\begin{defn}\label{def:livingdead}
    A vertex $v$ of $L$ is \textit{living} [resp., \textit{dead}] if $\varphi(v) \neq 0$ [resp., $\varphi(v) = 0$]. Denote the full subcomplex of $L$ spanned by the living [resp., dead] vertices by $L^\mathsf{a}$ [resp., $L^\mathsf{d}$].
\end{defn}

Let $Z = h\inv(\{p\})$ for some $p \notin \Z$. The level set $Z$ has a natural CW complex structure and $BB_L^\varphi$ acts cocompactly on $Z$; we denote the quotient $Z/BB_L^\varphi$ by $Y$. For each $n$-simplex $\sigma \in L$, the subtorus $T_\sigma \subseteq T_L$ lifts to $X_\sigma$, a collection of pairwise disjoint \textit{sheets} in $X_L$. Each sheet is an isometrically embedded copy of $(n+1)$-dimensional Euclidean space. 

Let $\mathsf{P}$ be the poset of simplices of $L$ that contain at least one vertex in $L^\mathsf{a}$. Then $Z$ is covered by the collection $\{ X_\sigma \cap Z \}_{\sigma \in \mathsf{P}}$. Writing $Y_\sigma$ for the image of $X_\sigma \cap Z$ in $Y$, we obtain a poset of spaces $\mathcal{Y} = \{ Y_\sigma \}_{\sigma \in \mathsf P}$ of $Y$ where each subcomplex $Y_\sigma$ is a disjoint union of tori or a set of vertices. Crucially, $\mathcal Y$ is a confident cover.

\begin{lem}\label{lem:countPoints}
    If $\sigma = \{v\} \in L^\mathsf{a}$ is a vertex, then $Y_\sigma$ is a collection of $\abs{\varphi(v)}$ vertices.
\end{lem}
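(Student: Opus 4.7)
The plan is to pass to the intermediate covering $\overline{X_L} := X_L / BB_L^\varphi$, on which the height function descends. Since $h \colon X_L \to \R$ is $A_L$-equivariant with respect to the action of $\Z$ on $\R$ via $\varphi$, the kernel $BB_L^\varphi = \ker\varphi$ acts trivially on the target, so $h$ descends to a well-defined map $\bar h \colon \overline{X_L} \to \R$ with $Y = \bar h^{-1}(\{p\})$. Write $\bar\pi \colon \overline{X_L} \to T_L$ for the covering map and set $\overline{T_v} := \bar\pi^{-1}(T_v)$. Because $BB_L^\varphi$ preserves both $X_v$ and $Z$, set-theoretic intersection commutes with the quotient, so the image of $X_v \cap Z$ in $Y$ is precisely $Y \cap \overline{T_v}$; it thus suffices to count the points of this intersection.

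The maps $\bar h$, $\bar\pi$, $f$, and the projection $q \colon \R \to S^1$ fit into a commutative square $q \circ \bar h = f \circ \bar\pi$. The deck group of $\bar\pi$ is $A_L/BB_L^\varphi \cong \Z$, and a generator shifts $\bar h$ by $1$; hence on any fibre of $\bar\pi$ the function $\bar h$ takes each value in the coset $f(\bar\pi(y)) + \Z \subset \R$ exactly once. This implies that $\bar\pi$ restricts to a bijection $Y \to f^{-1}(\{\bar p\}) \subseteq T_L$, where $\bar p = p + \Z \in S^1$, and intersecting with $\overline{T_v}$ gives a bijection $Y_v \longleftrightarrow (f|_{T_v})^{-1}(\{\bar p\})$.

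Finally, by construction $f|_{T_v} \colon S^1_v \to S^1$ is the map $x \mapsto \varphi(v) x \bmod \Z$, which is a covering of degree $|\varphi(v)|$ since $v \in L^{\mathsf a}$ implies $\varphi(v) \neq 0$. Therefore $Y_v$ consists of exactly $|\varphi(v)|$ points, and each is a $0$-cell of $Y$ because it lies in the interior of a $1$-cell of $\overline{X_L}$ that meets $Y$ transversely (automatic since $p \notin \Z$). The argument is essentially covering-space bookkeeping; the only point requiring care is choosing the right intermediate cover so that set-theoretic intersections on $Y$ correspond to intersections in $T_L$, after which the count reduces to the degree of $f|_{T_v}$.
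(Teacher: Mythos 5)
Your proof is correct, but it takes a genuinely different route from the paper's. The paper works directly in the universal cover $X_L$: it subdivides each edge of $X_\sigma$ into $|\varphi(v)|$ height-$1$ segments, classifies them into $|\varphi(v)|$ ``types,'' and shows (using height-preservation and the fact that $\gcd_v \varphi(v)=1$) that $BB_L^\varphi$ acts transitively on type-$i$ segments at a fixed height and that $Z$ meets segments of every type, so the $BB_L^\varphi$-orbits of lines in $X_\sigma$ are exactly indexed by types. You instead pass to the intermediate $\Z$-cover $\overline{X_L}=X_L/BB_L^\varphi$, observe that $\bar h$ restricted to each fibre of $\bar\pi\colon\overline{X_L}\to T_L$ is a bijection onto a $\Z$-coset (here is where surjectivity of $\varphi$ enters for you, versus the paper's explicit gcd argument), conclude that $\bar\pi|_Y\colon Y\to f^{-1}(\bar p)$ is a bijection, and then read off the count as the degree $|\varphi(v)|$ of the circle covering $f|_{T_v}\colon S^1_v\to S^1$. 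Your covering-space argument is shorter and packages the count conceptually as a mapping degree; the paper's combinatorial version is more explicit about the CW structure on $Z$ and $Y$, which is mildly useful elsewhere in the section. One small bookkeeping point in your write-up: you should justify that taking images commutes with the intersection, i.e.\ that $(X_v\cap Z)/BB_L^\varphi = (X_v/BB_L^\varphi)\cap(Z/BB_L^\varphi)$; this does hold because both $X_v$ and $Z$ are $BB_L^\varphi$-invariant (if $a\in X_v$, $b\in Z$ and $ga=b$, then $b\in gX_v=X_v$, so $b\in X_v\cap Z$), and it is worth a sentence rather than the phrase ``set-theoretic intersection commutes with the quotient,'' which is not automatic for arbitrary subsets.
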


\begin{proof}
    We will show that there are exactly $\abs{\varphi(v)}$ orbits of lines in $X_\sigma$ under the $BB_L^\varphi$-action on $X_L$. For each vertex $v$ in $L^\mathsf{a}$, evenly subdivide each edge of $X_v$ into $\abs{\varphi(v)}$ segments. Note that the restriction $\overline{X}_L^{(1)} \rightarrow \R$ of $h$ is cellular, where $\overline{X}_L^{(1)}$ is the subdivided $1$-skeleton of $X_L$ and $\R$ is given the cell structure with vertex set $\Z$ and edge set $\{[n,n+1] : n \in \Z\}$.

    Fix a vertex $\sigma = \{v\} \in L^\mathsf{a}$ and let $\overline{X}_\sigma$ be the subdivision of $X_\sigma$. Let $e$ be an edge of $\overline{X}_\sigma$ and let $e'$ be the unique edge of $X_\sigma$ such that $e \subseteq e'$. We say $e$ is an edge of \textit{type $i$} if it is the $i$th highest edge (under the height function $h$) contained in $e'$; the integer $i$ can take values in $\{0, \dots, \abs{\varphi(v)} -1 \}$.

    Because the action of $BB_L^\varphi$ on $X_L$ is height preserving, it preserves the set of type $i$ edges. Since $\varphi$ is surjective, $\gcd(( \varphi(v) )_{v \in L^{(0)}}) = 1$. Therefore, the generic level set $Z$ intersects edges of type $i$ for every $i \in \{0, \dots, \abs{\varphi(v)} - 1\}$. Moreover, $BB_L^\varphi$ acts transitively on the set of edges of type $i$ of the same height, which follows from the fact that $BB_L^\varphi$ acts transitively on the set of edges of $T_\sigma$ of the same height. Thus, we conclude that there are exactly $\abs{\varphi(v)}$ orbits of lines in $X_\sigma$ under the $BB_L^\varphi$ action. \qedhere
\end{proof}

\begin{thm}\label{thm:BBapprox}
    Let $\varphi \colon A_L \rightarrow \Z$ be an epimorphism and let $Y$ be a generic level set of the induced height function. If $\mathbb FBB_L^\varphi \rightarrow \mathcal D$ is an agrarian embedding, then
    \[
        b_p^\mathcal{D}(Y) = b_p^{(2)}(Y ; \mathbb F) = \sum_{v \in L^{(0)}} \abs{\varphi(v)} \cdot \widetilde{b}_{p-1}(\lk(v); \mathbb F).
    \]
    Moreover, if $BB_L^\varphi$ is of type $\mathsf{FP}_n(\mathbb F)$ then
    \[
        b_p^\mathcal D(BB_L^\varphi) = b_p^{(2)}(BB_L^\varphi; \mathbb F) = \sum_{v \in L^{(0)}} \abs{\varphi(v)} \cdot \widetilde{b}_{p-1}(\lk(v); \mathbb F).
    \]
    for all $p \leqslant n$. We also have $b_p^\mathcal D(H) = b_p^{(2)}(H; \mathbb F)$ whenever $H$ is a finite index subgroup of $BB_L^\varphi$ and $\mathcal D$ is the Hughes-free division ring of $\FF H$.
\end{thm}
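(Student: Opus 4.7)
The plan is to apply the confident-cover machinery from \cref{subsec:confident,subsec:agrarianConfidence} to the generic level set $Y$ with its cover $\mathcal Y = \{Y_\sigma\}_{\sigma \in \mathsf P}$, and then to transfer the resulting formula from $Y$ to the group $BB_L^\varphi$ itself via a Bestvina--Brady-type connectivity argument on $Z$.

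For the first step, I would unpack the combinatorics of the nerve $K = \Flag(\mathsf P)$. The minimal elements of $\mathsf P$ are exactly the singletons $\{v\}$ with $v \in L^\mathsf{a}$, so the set $S$ appearing in \cref{prop:nerveAgrarian} is $\{v \in L^{(0)} : \varphi(v) \neq 0\}$; for each such $v$, \cref{lem:countPoints} gives $n_v = \abs{\varphi(v)}$. Moreover, the assignment $\sigma \mapsto \sigma \setminus \{v\}$ is an order-preserving bijection between simplices of $L$ strictly containing $v$ and simplices of $\lk(v)$, so $\lk_K(\{v\}) = \Flag(\mathsf P_{>\{v\}})$ is canonically the barycentric subdivision of $\lk(v)$ and therefore has the same reduced $\FF$-homology. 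Plugging these identifications into \cref{prop:nerveAgrarian} gives
\[
    b_p^{\mathcal D}(Y) = \sum_{v \,:\, \varphi(v) \neq 0} \abs{\varphi(v)} \cdot \widetilde b_{p-1}(\lk(v); \FF) = \sum_{v \in L^{(0)}} \abs{\varphi(v)} \cdot \widetilde b_{p-1}(\lk(v); \FF),
\]
where the extension to all vertices of $L$ is valid because dead vertices contribute zero. Since $BB_L^\varphi \leqslant A_L$ is RFRS and thus residually finite, the last clause of \cref{prop:nerveAgrarian} simultaneously delivers $b_p^{\mathcal D}(Y) = b_p^{(2)}(Y; \FF)$, proving both displayed equalities for $Y$.

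To transfer the formula from $Y$ to $BB_L^\varphi$, I would appeal to the Bestvina--Brady-style theorem of \cite{BuxGonzalez1999} (and its positive-characteristic refinement): the hypothesis $BB_L^\varphi \in \FP_n(\FF)$ translates into the level set $Z$ being sufficiently acyclic over $\FF$ through the relevant range. Since $BB_L^\varphi$ acts freely and cocompactly on $Z$ with quotient $Y$, the cellular chain complex $C_\bullet(Z;\FF)$ is a complex of finitely generated free $\FF BB_L^\varphi$-modules that extends to a partial free resolution of $\FF$ through degree $n$; tensoring with $\mathcal D$ (respectively with $\FF$) and comparing with the bar resolution of $BB_L^\varphi$ identifies $b_p^{\mathcal D}(Y)$ with $b_p^{\mathcal D}(BB_L^\varphi)$ and $b_p^{(2)}(Y;\FF)$ with $b_p^{(2)}(BB_L^\varphi;\FF)$ for all $p \leqslant n$.

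Finally, for a finite-index subgroup $H \leqslant BB_L^\varphi$ the corresponding cover $\widehat Y \to Y$ inherits a confident cover (cf.\ \cref{cor:FIconfident}) and $H$ remains of type $\FP_n(\FF)$, so the argument of the previous two paragraphs applied to the pair $(\widehat Y, H)$ together with \cref{cor:virtAgrarian} yields $b_p^{\DF H}(H) = b_p^{(2)}(H;\FF)$ in the range $p \leqslant n$. The main obstacle throughout is the positive-characteristic Bestvina--Brady connectivity input for $Z$; once that is secured, the remainder is a direct application of the confident-cover formalism developed in the preceding subsections.
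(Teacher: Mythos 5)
Your proposal is correct and takes essentially the same route as the paper: you reduce to the confident-cover formalism via \cref{prop:nerveHomology,prop:nerveAgrarian} together with \cref{lem:countPoints}, then transfer from $Y$ to the group through the Bux--Gonzalez acyclicity criterion over $\FF$, and handle the finite-index clause via \cref{cor:FIconfident,cor:virtAgrarian}. The only difference is cosmetic --- you spell out the combinatorial identification of $\lk_K(\{v\}) = \Flag(\mathsf P_{>\{v\}})$ with the barycentric subdivision of $\lk_L(v)$, which the paper leaves implicit.
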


Before beginning the proof, we remark that $BB_L^\varphi$ is a RFRS group, being a subgroup of a RAAG. Hence, by \cite[Corollary 1.3]{JaikinZapirain2020THEUO}, $\mathcal D_{\mathbb F BB_L^\varphi}$ exists. When $\mathbb F = \Q$, the Hughes free division ring and the Linnell skew field coincide, so in this case \cref{thm:BBapprox} computes the $\ell^2$-Betti numbers of $BB_L^\varphi$.  This generalises the computation of Davis and Okun in \cite[Theorem 4.4]{DavisOkun2012}

\begin{proof}
    The first statement is an immediate consequence of \cref{prop:nerveHomology,prop:nerveAgrarian}, \Cref{rem agrarian spaces}, \Cref{rem gradient spaces}, \cref{lem:countPoints}, and the observation that $\mathcal Y$ is a confident cover. The second follows from the fact that if $BB_L^\varphi$ is of type $\mathsf{FP}_n(\mathbb F)$ if and only if $Z$ is $n$-acyclic with $\mathbb{F}$ coefficients \cite{BuxGonzalez1999} (Bux--Gonzalez consider only the case $\mathbb F = \Z$, but their result remains true over arbitrary coefficients). \qedhere
\end{proof}

\begin{rem}\label{rem:livingDead}
    The condition that $BB_L^\varphi$ is of type $\mathsf{FP}_n(\mathbb F)$ can be verified directly in the flag complex $L$. Recall that a topological space is $n$-acyclic if its reduced homology (with coefficients in $\mathbb F$ in our case) vanishes in degrees $\leqslant n$. Note that we use the convention that the reduced homology of the empty set is $\mathbb F$ in dimension $-1$, so if $X$ is $n$-acyclic for $n \geqslant -1$, then $X$ is nonempty. 

    Bux and Gonzalez show that $BB_L^\varphi$ is of type $\mathsf{FP}_n(\mathbb F)$ if and only if $L^\mathsf a \cap \lk(\sigma)$ is $(n - \dim(\sigma) - 1)$-acyclic (with coefficients in $\mathbb F$) for every simplex $\sigma \in L^\mathsf d$, including the empty simplex which has dimension $-1$ and link $L$ \cite[Theorem 14]{BuxGonzalez1999}. Their result is stated in the case $\mathbb F = \mathbb \Z$ but remains true, with the same proof, when stated over a general coefficient ring. We will use this characterisation of the finiteness properties of $BB_L^\varphi$ is \cref{sec:fibring}.
\end{rem}

It is known by work of Okun--Schreve \cite{OkunSchreveTorsion} that for RAAGs the homology torsion growth $t_p^{(2)}(A_L)$ in degree $p$ is equal to $|H_{p-1}(L;\Z)_{\mathrm{tors}}|$.  We conjecture an analogous result for Artin kernels.

\begin{conj}
If $BB_L^\varphi$ is of type $\mathsf{FP}_n$, then
\[
    t_p^{(2)}(BB_L^\varphi) = \limsup_{n\to\infty}\frac{\log|H_p(G_n;\Z)_{\mathrm{tors}}|}{[BB_L^\varphi:G_n]} = \sum_{v \in L^{(0)}} \abs{\varphi(v)} \cdot |H_{p-1}(\lk(v); \Z)_{\mathrm{tors}}|
\]
for $p \leqslant  n$ and any residual chain $(G_n)$.
\end{conj}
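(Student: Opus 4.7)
The plan is to adapt the confident cover / Mayer--Vietoris argument underlying \Cref{thm:BBapprox} to integer coefficients, following the template of Okun--Schreve for RAAGs \cite{OkunSchreveTorsion}. Fix a residual chain $(G_n)$ in $BB_L^\varphi$, let $Y_n$ be the cover of the generic level set $Y = Z/BB_L^\varphi$ corresponding to $G_n$, and write $V_n = \Z[BB_L^\varphi/G_n]$. The starting point is the Mayer--Vietoris spectral sequence
\[
E^1_{p,q} = C_p\bigl(K;\, H_q(Y_\sigma; V_n)\bigr) \Longrightarrow H_{p+q}(Y_n; \Z)
\]
attached to the confident cover $\mathcal Y = \{Y_\sigma\}_{\sigma \in \mathsf P}$ of $Y$ with nerve $K$.

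If $Y_\sigma \not\subseteq Y^{(0)}$, then each component of $Y_\sigma$ is a torus and each component of its preimage in $Y_n$ is again a torus, so $H_q(Y_\sigma; V_n)$ is free abelian for every $q$. The only $E^1$-entries that can carry or create torsion therefore lie on the $q = 0$ row and come from vertices $\alpha \in K$ with $Y_\alpha \subseteq Y^{(0)}$. Applying the projection from the proof of \Cref{prop:nerveHomology} (with $V$ replaced by $V_n$) together with the short exact sequence
\[
0 \to C_\bullet(\lk(\alpha); \Z) \otimes V_n^{n_\alpha} \to C_\bullet(\st(\alpha); \Z) \otimes V_n^{n_\alpha} \to C_\bullet(\st(\alpha); \mathcal A_\alpha) \to 0,
\]
the dominant contribution to the torsion of $H_p(Y_n; \Z)$ is predicted to come from $\bigoplus_\alpha H_{p-1}(\lk(\alpha); \Z) \otimes V_n^{n_\alpha}$, whose torsion has order $\prod_\alpha |H_{p-1}(\lk(\alpha);\Z)_{\mathrm{tors}}|^{n_\alpha [BB_L^\varphi : G_n]}$. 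By \Cref{lem:countPoints} the orbit representatives $\alpha$ are in bijection with the living vertices $v$ of $L$, each counted with multiplicity $|\varphi(v)|$, which yields the claimed right-hand side after dividing the logarithm by the index.

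Two steps still have to be carried out and constitute the main obstacles. First, one must control torsion through the projection $E^1_{p,0} \to C_p(K; \mathcal A_\alpha)$ and through the spectral sequence extensions from $E^2$ to $E^\infty$ and on to $H_p(Y_n; \Z)$. Torsion is not additive in short exact sequences of finitely generated abelian groups, but iterating the estimate $|B_{\mathrm{tors}}| \leqslant |A_{\mathrm{tors}}| \cdot |C_{\mathrm{tors}}|$ through the finitely many pages of the spectral sequence should reduce the problem to showing that the torsion of the non-vertex row-$0$ chain subcomplex has sublinear $\log$-growth, for which one can mimic Okun--Schreve's torus-cover lemma from \cite{OkunSchreveTorsion}. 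Second, the matching lower bound is most cleanly obtained prime-by-prime: by \Cref{thm:BBapprox} the $\FF_p$-homology gradient of $BB_L^\varphi$ equals $\sum_v |\varphi(v)| \widetilde{b}_{p-1}(\lk(v); \FF_p)$ for every prime $p$, and the universal coefficient theorem then pins down the $p$-primary torsion growth of $H_p(G_n;\Z)$ for all $p$. Summing over primes and using that $H_{p-1}(\lk(v);\Z)$ is finitely generated should reproduce the conjectured formula.
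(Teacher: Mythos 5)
This statement is a \emph{conjecture} in the paper (the text reads ``We conjecture an analogous result for Artin kernels''), so there is no proof in the paper to compare against; the authors explicitly leave it open. Your writeup is therefore being assessed on its own merits, and as written it is a sketch with two acknowledged gaps, both of which turn out to be genuine obstacles rather than routine bookkeeping.

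The second gap is the fatal one. You propose to obtain the lower bound ``prime-by-prime'' from the $\FF_p$-homology gradients of \Cref{thm:BBapprox} via the universal coefficient theorem, claiming this ``pins down the $p$-primary torsion growth of $H_p(G_n;\Z)$.'' It does not. For a finitely generated abelian group $A$ with torsion subgroup $T$, the quantity that UCT and the mod-$p$ Betti numbers control is $\dim_{\FF_p}(T\otimes\FF_p)=\dim_{\FF_p}(T[p])$, i.e.\ the number of $p$-power cyclic summands; it carries no information about the orders of those summands. A summand $\Z/p$ and a summand $\Z/p^{m}$ contribute identically to every $\dim_{\FF_p}H_k(G_n;\FF_p)$, while contributing $\log p$ versus $m\log p$ to $\log|T|$. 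So knowing every $\btwo{k}(BB_L^\varphi;\FF_p)$ for every prime $p$ and every $k$ is compatible with wildly different torsion growth; this is exactly why homology torsion growth is a hard invariant and why Okun--Schreve's proof for RAAGs does \emph{not} proceed by reducing to mod-$p$ Betti numbers. There is also a secondary issue even if one had per-prime control: the claimed formula is an equality, and summing the (hypothetical) per-prime bounds over the infinitely many primes with nontrivial contribution needs a uniformity estimate you have not supplied.

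The first gap is also more serious than you indicate. The inequality $|B_{\mathrm{tors}}|\leqslant|A_{\mathrm{tors}}|\cdot|C_{\mathrm{tors}}|$ propagated through finitely many pages gives only an upper bound on $\log|H_p(Y_n;\Z)_{\mathrm{tors}}|$, and a matching lower bound must rule out cancellation of torsion both in the projection $E^1_{p,0}\to C_p(K;\mathcal A_\alpha)$ and in the spectral-sequence extensions. This is precisely the delicate part of the Okun--Schreve argument, and it is not clear that the Artin-kernel setting (where the nerve $K$ is not contractible, the vertex pieces $Y_\alpha\subseteq Y^{(0)}$ do not form a single orbit, and the poset has the extra stratification by living/dead vertices) admits the same control without new ideas. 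In short, the proposal correctly identifies the template one would want to imitate, but both of the steps you flag as ``still to be carried out'' are exactly where the content of the conjecture lies, and the UCT step as described would not work even in principle.
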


\subsection{Graph products}

Let $K$ be a simplicial complex on the vertex set $[m] := \{1, \dots, m\}$. Let $(\mathbf{X}, \mathbf{A}) = \{(X_i, A_i) : i \in [m] \}$ be a collection of CW-pairs. The \textit{polyhedral product} of $(\mathbf{X}, \mathbf{A})$ and $K$ is the space
\[
    (\mathbf{X}, \mathbf{A})^K := \bigcup_{\sigma \in K} \prod_{i = 1}^m Y_i^\sigma \quad \text{where} \quad Y_i^\sigma = 
    \begin{cases}
        X_i & \text{if } i \in \sigma, \\
        A_i & \text{if } i \notin \sigma .
    \end{cases}
\]

If $\mathbf{\Gamma} = \{\Gamma_1, \dots, \Gamma_m\}$ is a finite set of groups, then the \textit{graph product} of $\mathbf{\Gamma}$ and $K$, denoted $\mathbf{\Gamma}^K$, is the quotient of the free product $\ast_{i \in [m]} \Gamma_i$ by all the relations $[\gamma_i, \gamma_j] = 1$, where $\gamma_i \in \Gamma_i$, $\gamma_j \in \Gamma_j$ and $i$ and $j$ are adjacent vertices of $K$. Note that $\mathbf{\Gamma}^K$ is the fundamental group of $X = (B\mathbf{\Gamma}, \ast)^K$, where $B\mathbf{\Gamma} = \{ B\Gamma_1, \dots, B\Gamma_m \}$ and $\ast$ is a set of one-point subcomplexes. Note that by \cite[Theorem 1.1]{Stafa2015}, if $K$ is a flag complex and each $\Gamma_i$ is a discrete group, then $(B\mathbf{\Gamma}, \ast)^K$ is a model for a $K(\mathbf{\Gamma}^K,1)$.

\begin{thm}\label{thm:agrGraphProd}
    Let $\mathbb F$ and $\mathcal{D}$ be skew fields and let $K$ be a finite simplicial flag complex. Let $G = \mathbf{\Gamma}^K$ be a graph product of discrete groups such that there is an agrarian map $\mathbb FG \to \mathcal{D}$. If $H_\bullet(\Gamma; \mathcal{D}) = 0$ for each $\Gamma \in \mathbf{\Gamma}$, then
    \[
        H_p^\mathcal D(\mathbf \Gamma) \cong \widetilde{H}_{p-1}(K; \mathcal{D}).
    \]
    In particular, $b_p(G;\mathcal{D}) = b_{p-1}(K; \mathbb F)$.
\end{thm}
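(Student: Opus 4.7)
The plan is to realise $BG$ as the polyhedral product $X = (B\mathbf\Gamma, \ast)^K$, cover it by products of classifying spaces indexed by the face poset of $K$ together with a basepoint, and then invoke the generalised form of \Cref{prop:nerveAgrarian} recorded in \Cref{rem agrarian spaces}. Because $K$ is flag and the $\Gamma_i$ are discrete, $X$ is a model for $BG$, so it suffices to compute $H_p^\mathcal D(X)$.

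I would equip $X$ with the poset of subspaces $\mathcal X = \{X_\sigma\}_{\sigma \in \mathsf P}$ indexed by $\mathsf P = \{\emptyset\} \sqcup \mathsf K$, where $\mathsf K$ is the face poset of $K$, $X_\emptyset = \ast$ is the basepoint, and $X_\sigma = \prod_{i\in\sigma}B\Gamma_i$ for $\sigma \in \mathsf K$. The identity $X_\sigma \cap X_\tau = X_{\sigma\cap\tau}$ makes $\mathcal X$ a poset of subspaces; each $X_\sigma$ is connected; $\pi_1(X_\sigma) = \prod_{i\in\sigma}\Gamma_i$ embeds in $G$ by the standard parabolic subgroup theorem for graph products; and $X_\emptyset$ is the only member of the cover lying in $X^{(0)}$, and $\emptyset$ is the unique minimum of $\mathsf P$. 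The one nontrivial hypothesis to verify is the vanishing $H_\bullet^\mathcal D(X_\sigma) = 0$ for each nonempty $\sigma$. I would prove this by induction on $|\sigma|$ using the Lyndon--Hochschild--Serre spectral sequence of the trivial extension
\[
    1 \longrightarrow \Gamma_i \longrightarrow \prod_{j \in \sigma}\Gamma_j \longrightarrow \prod_{j \in \sigma \setminus \{i\}}\Gamma_j \longrightarrow 1
\]
with coefficients in $\mathcal D$, the latter viewed as a module via the restriction of the agrarian embedding $\mathbb F G \hookrightarrow \mathcal D$. By hypothesis $H_q(\Gamma_i; \mathcal D) = 0$ for every $q$, so the $E^2$-page vanishes and the induction step follows.

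With the hypotheses of \Cref{prop:nerveAgrarian} (as generalised by \Cref{rem agrarian spaces}) in place, the nerve $N = \Flag(\mathsf P)$ is the cone over $\Flag(\mathsf K)$, the barycentric subdivision of $K$, with apex the vertex $\emptyset$. Thus $\lk(\emptyset) \cong K$ and, since the only vertex of $N$ whose subspace lies in $X^{(0)}$ is $\emptyset$, the proposition gives
\[
    H_p^\mathcal D(G) \;\cong\; H_p\bigl(\Flag(\mathsf P),\, \Flag(\mathsf P_{>\emptyset});\, \mathcal D\bigr) \;\cong\; \widetilde H_{p-1}(K; \mathcal D),
\]
the second isomorphism using contractibility of the cone together with the long exact sequence of the pair. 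Passing to $\mathcal D$-dimensions and using $\widetilde H_{p-1}(K;\mathcal D) \cong \widetilde H_{p-1}(K;\mathbb F)\otimes_\mathbb F \mathcal D$ yields the stated equality of Betti numbers.

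The main technical obstacle is the spectral sequence step: agrarian embeddings into skew fields need not make $\mathcal D$ a flat module over subgroup rings, so one cannot extract vanishing from a naive Künneth argument. Fortunately, the hypothesis that each vertex group already has vanishing $\mathcal D$-homology makes the entire $E^2$-page of the Lyndon--Hochschild--Serre spectral sequence collapse, which sidesteps this subtlety and drives the induction.
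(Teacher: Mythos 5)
Your proof is correct and takes essentially the same route as the paper: the same polyhedral-product model $X=(B\mathbf\Gamma,\ast)^K$, the same poset of spaces with $X_\varnothing=\ast$ at the minimum, the Davis--Okun spectral sequence collapsing onto the empty simplex, and the identification of $\Flag(\mathsf P)$ with the cone on the barycentric subdivision of $K$. The only divergence is in how you establish $H_\bullet^{\mathcal D}(X_\sigma)=0$ for $\sigma\neq\varnothing$: the paper simply invokes ``the K\"unneth formula'' for the direct product $\prod_{i\in\sigma}\Gamma_i$, whereas you replace this with an explicit Lyndon--Hochschild--Serre spectral sequence induction, which is the more careful reading of what ``K\"unneth'' means here, since $\mathcal D$ is a single $\mathbb FG$-module rather than an external tensor product of coefficients for the factors.
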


\begin{proof}
    Let $X = (B\mathbf{\Gamma}, \ast)^K$ and let $\mathsf{P}$ be the poset whose elements are (possibly empty) simplices of $K$. Note that $\mathsf{P}$ defines a poset of spaces $\{X_\sigma\}_{\sigma \in \mathsf P}$, where $X_\varnothing$ is a single vertex. For each $\varnothing \neq \sigma \in \mathsf{P}$, the group $\mathbf{\Gamma}^J$ is a direct product of groups with vanishing $\mathcal{D}$-homology, and therefore $H^n(\mathbf{\Gamma}^J; \mathcal{D}) = H^n(X_J; \mathcal{D}) = 0$ by the K\"unneth formula. Take the spectral sequence of \cref{lem:spec} with the coefficient system $\sigma \mapsto H_q(X_{\min(\sigma)}; \mathcal{D})$. All of these coefficient system cohomology groups vanish except when $\sigma = \varnothing$ and $q = 0$ and therefore
    \[
        H^p(X; \mathcal{D}) = H^p(\Flag(\mathsf{P}), \Flag(\mathsf{P}>\varnothing); \mathcal{D}) \cong \widetilde{H}^{p-1}(K; \mathcal{D}).
    \]
    by \cref{lem:spec}. The last isomorphism follows from the fact that $\mathsf{P}$ is isomorphic to the cone on the barycentric subdicision of $K$, where the cone point corresponds to the empty simplex $\varnothing$. \qedhere
\end{proof}

\begin{rem}
    There are certain situations in which one can easily deduce the fact that a graph product has an agrarian embedding. For example, if each group $\Gamma \in \mathbf \Gamma^K$ is ordered, then so is $\mathbf \Gamma^K$ by a result of Chiswell \cite{Chiswell2012}. Thus, $\mathbb F\mathbf \Gamma^K$ embeds into its Mal'cev--Neumann completion \cite{Malcev1948,Neumann1949}.

    Similarly, if each $\Gamma \in \mathbf \Gamma^K$ is RFRS, then it is possible to show that $\mathbf \Gamma^K$ is also RFRS and therefore has a Hughes-free division ring $\mathcal D_{\mathbb F\mathbf \Gamma^K}$. We provide a sketch of an argument here, which proceeds by induction on the number of vertices in $K$. If $K$ has one vertex, then the claim is trivial. Suppose now that $K$ has more than one vertex. If the $1$-skeleton of $K$ is a complete graph, then the claim is again trivial since $\mathbf \Gamma^K$ is a direct product of RFRS groups and therefore RFRS. If $K$ is not a complete graph, then there is some vertex $v \in K$ such that $\st(v) \neq K$ and we obtain a splitting $\mathbf\Gamma^K \cong \mathbf \Gamma^{\st(v)} \ast_{\mathbf\Gamma^{\lk(v)}} \mathbf\Gamma^{K \smallsetminus v}$. By induction, both $\mathbf \Gamma^{\st(v)}$ and $\mathbf\Gamma^{K \smallsetminus v}$ are RFRS and by a result of Koberda and Suciu, the amalgam is also RFRS \cite[Theorem 1.3]{KoberdaSuciu2020}. Note that the Koberda--Suciu result is a combination theorem for a related class of groups called RFR$p$ groups, but their proof is easily adapted to the RFRS case; we refer the interested reader to their paper for more details.
\end{rem}

Thanks to the work of Okun and Schreve \cite{OkunSchreveTorsion}, we have the following corollary which holds, in particular, for RAAGs.  

\begin{cor}
Let $\mathbb F$ and $\mathcal D$ be skew fields and let $K$ be a finite simplicial flag complex. Let $G = \mathbf{\Gamma}^K$ be a graph product with an agrarian map $\mathbb FG \to \mathcal D$. If each $\Gamma \in \mathbf \Gamma$ is residually finite, $\mathcal D$-acyclic, and $\mathbb F$-$\ell^2$-acyclic, then
\[
    b_n^\mathcal{D}(G) = \lim_{i \to \infty} \frac{b_n(G_i; \mathbb{F}_p)}{[G : G_i]}
\]
for any residual chain $\Gamma = \Gamma_0 \trianglerighteqslant G_1 \trianglerighteqslant G_2 \trianglerighteqslant \cdots$ of finite index normal subgroups.
\end{cor}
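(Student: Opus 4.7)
The strategy is to compute both sides of the stated equality as the combinatorial quantity $\widetilde b_{n-1}(K;\mathbb F)$. The agrarian side is already at hand: since each $\Gamma\in\mathbf\Gamma$ is $\mathcal D$-acyclic by hypothesis, \cref{thm:agrGraphProd} yields $b_n^{\mathcal D}(G) = \widetilde b_{n-1}(K;\mathbb F)$. It therefore remains to prove
\[
\lim_{i\to\infty}\frac{b_n(G_i;\mathbb F)}{[G:G_i]} = \widetilde b_{n-1}(K;\mathbb F)
\]
for every residual chain $(G_i)$, including the existence and chain-independence of the limit.

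To do this, I would model $BG$ by the polyhedral product $X = (B\mathbf\Gamma,\ast)^K$, which is a $K(G,1)$ by \cite[Theorem 1.1]{Stafa2015}. Let $X_i \to X$ be the cover corresponding to $G_i$ and set $V_i = \mathbb F[G/G_i]$, so that $H_\bullet(X_i;\mathbb F) = H_\bullet(X;V_i) = H_\bullet(G_i;\mathbb F)$. The polyhedral structure of $X$ supplies the poset of spaces $\mathcal Y = \{Y^\sigma = \prod_{j\in\sigma}B\Gamma_j\}_{\sigma\in\mathsf P}$ indexed by the poset $\mathsf P$ of (possibly empty) simplices of $K$. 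Since \cref{lem:spec} imposes no compactness hypothesis on the ambient CW complex, it provides a Mayer--Vietoris spectral sequence
\[
E^2_{p,q} = H_p(\mathrm{Flag}(\mathsf P);\mathcal H_q(\mathcal Y;V_i)) \Longrightarrow H_{p+q}(G_i;\mathbb F).
\]

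Next, I would control the $E^2$-page up to terms sublinear in $[G:G_i]$. At $\sigma=\emptyset$ the space $Y^\emptyset$ is a single point, so $\mathcal H_q(\mathcal Y;V_i)(\emptyset) = V_i$ for $q=0$ and vanishes otherwise. For $\sigma\neq\emptyset$, writing $\Gamma_\sigma = \prod_{j\in\sigma}\Gamma_j$, Shapiro's lemma together with the normality of $G_i$ gives
\[
\dim_{\mathbb F} H_q(Y^\sigma;V_i) = \frac{[G:G_i]}{[\Gamma_\sigma:\Gamma_\sigma\cap G_i]}\dim_{\mathbb F} H_q(\Gamma_\sigma\cap G_i;\mathbb F).
\]
A K\"unneth-type argument along the residual chain $(\Gamma_\sigma\cap G_i)_i$ of $\Gamma_\sigma$, leveraging the $\mathbb F$-$\ell^2$-acyclicity of each factor $\Gamma_j$, shows that this quantity divided by $[G:G_i]$ tends to zero. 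Thus every contribution from a simplex $\sigma\neq\emptyset$ washes out in the limit, leaving only the coefficient system supported at $\emptyset$ with value $V_i$. Identifying $\mathrm{Flag}(\mathsf P)$ with the cone on the barycentric subdivision of $K$, with cone point $\emptyset$, exactly as in the last step of the proof of \cref{thm:agrGraphProd}, one obtains
\[
H_p(\mathrm{Flag}(\mathsf P),\mathrm{Flag}(\mathsf P_{>\emptyset});V_i) \cong V_i \otimes_{\mathbb F} \widetilde H_{p-1}(K;\mathbb F),
\]
of $\mathbb F$-dimension $[G:G_i]\cdot \widetilde b_{p-1}(K;\mathbb F)$. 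Dividing by $[G:G_i]$ and letting $i \to \infty$ produces the claimed limit and simultaneously shows it is chain-independent.

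The main obstacle is the K\"unneth-style bound on the $\mathbb F$-$\ell^2$-Betti numbers of $\Gamma_\sigma$ along $(\Gamma_\sigma\cap G_i)$. In positive characteristic this is delicate because a residual chain of a product need not split as a product; however, one can refine $(\Gamma_\sigma\cap G_i)$ to a sub-chain of product form $\prod_j \pi_j(\Gamma_\sigma\cap G_i)$, apply the classical K\"unneth formula at each finite level, and then use that every factor $\Gamma_j$ has vanishing homology gradient along every residual chain --- the robust interpretation of ``$\mathbb F$-$\ell^2$-acyclic'' used throughout the paper.
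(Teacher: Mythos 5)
Your handling of the agrarian side matches the paper exactly: both you and the authors invoke \cref{thm:agrGraphProd} to compute $b_n^{\mathcal D}(G) = \widetilde b_{n-1}(K;\mathbb F)$. Where you diverge is on the homology-gradient side. The paper's proof is a one-line citation to Okun--Schreve's Theorem~5.1, whereas you attempt to rederive that result from scratch by running the Mayer--Vietoris spectral sequence of \cref{lem:spec} on the polyhedral-product model, mirroring \cref{subsec:confident}. That is a legitimately different (and in principle more self-contained) route, but as written it has a gap that you yourself flag as ``the main obstacle,'' and the fix you sketch does not close it.

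The problematic step is showing that for a nonempty simplex $\sigma$ the group $\Gamma_\sigma = \prod_{j\in\sigma}\Gamma_j$ has vanishing $\mathbb F$-homology gradient along the particular chain $(\Gamma_\sigma\cap G_i)$. Your K\"unneth argument only gives this along a product-type chain such as $\prod_j(\Gamma_j\cap G_i)$. The proposed repair, ``refine $(\Gamma_\sigma\cap G_i)$ to a sub-chain of product form $\prod_j\pi_j(\Gamma_\sigma\cap G_i)$,'' fails on several counts. First, $\prod_j\pi_j(\Gamma_\sigma\cap G_i)$ \emph{contains} $\Gamma_\sigma\cap G_i$, so it is a super-chain rather than a sub-chain. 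Second, it need not be a residual chain at all: for instance if $\Gamma_\sigma = \Z^2$ and $H_i = \{(a,b): a\equiv b \pmod{2^i}\}$, then each projection $\pi_j(H_i)$ is all of $\Z$, so the proposed ``refinement'' is constant and its intersection is $\Z^2\neq 1$. Third, and most fundamentally, there is no monotonicity of $\mathbb F$-homology gradient between a chain and a nested sub- or super-chain in positive characteristic: the transfer argument that would give $b_n(N;\mathbb F)\leqslant b_n(M;\mathbb F)$ for $M\leqslant N$ of finite index requires $[N:M]$ to be invertible in $\mathbb F$, which is exactly what fails when $\operatorname{char}\mathbb F = p$ divides the index. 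So one cannot pass from the product chain to $(\Gamma_\sigma\cap G_i)$ by any straightforward comparison. Resolving precisely this difficulty is the content of the Okun--Schreve theorem the paper cites; without either citing it or supplying a genuine substitute argument, your proof is incomplete.
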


\begin{proof}
Okun and Schreve \cite[Theorem 5.1]{OkunSchreveTorsion} showed that the right-hand side of the above equation is independent of the choice of residual chain and equal to $\widetilde{b}_{p-1}(K;\mathbb F)$, which equals the left-hand side by \cref{thm:agrGraphProd}. \qedhere
\end{proof}

\begin{rem}
    Let $K$ be a simplicial complex, $X=(B\mathbf{\Gamma},\ast)^K$, and let $\widetilde{X}$ denote the universal cover.  The above arguments apply equally well for computing $b^\mc D_p(\widetilde{X})$ and $\btwo{p}(X;\FF)$.  In both cases they are equal to $\widetilde{b}_{p-1}(K;\FF)$ whenever they are defined.  In the case when $K$ is not a flag complex, $X$ is not aspherical.  
\end{rem}

\subsection{Artin groups}
Let $A$ be a residually finite Artin group and suppose the $K(A,1)$ conjecture holds for $A$.  Then there is a contractible complex $D_A$ --- the \emph{Deligne complex of $A$} --- with stabilisers the maximal parabolic subgroups of $A$ admitting a strict fundamental domain $Q_A$.  In \cite[Section~4 and Theorem~5.2]{OkunSchreveTorsion} the authors compute $b^{(2)}_p(A;\FF)=\tilde b_{p-1}(\partial Q_A;\FF)$ independently of a choice of residual chain.  Here $\partial Q$ is the subcomplex of $Q$ with non-trivial stabilisers.

\begin{thm}\label{thm Artin groups}
    Let $\FF$ be a field.  Let $A$ be a residually finite Artin group.  Suppose the $K(A,1)$ conjecture holds for $A$.  If $\FF A\to \mathcal D$ is an agrarian map, then
    \[b^{(2)}_p(A;\FF)=b^{\mathcal D}_p(A;\FF)=\tilde b_{p-1}(\partial Q_A;\FF). \]
\end{thm}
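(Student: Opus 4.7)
The plan is to adapt the strategy of Okun--Schreve from the $\ell^2$ setting to the agrarian setting, running the Davis--Okun Mayer--Vietoris spectral sequence (\Cref{lem:spec}) with $\mathcal D$-coefficients in place of $\mathcal N(A)$-coefficients.

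First, I would invoke the $K(A,1)$ conjecture to get that the Deligne complex $D_A$ is a contractible $A$-CW complex with strict fundamental domain $Q_A$, whose cell stabilisers are the spherical parabolic subgroups $A_T$. Stratifying $Q_A$ by spherical type gives a poset of spaces over the poset $\mathsf{P}$ of non-empty spherical simplices of the defining diagram, with the piece over $T$ carrying stabiliser $A_T$. Applied to this cover, the spectral sequence of \Cref{lem:spec} converges to $H^{\mathcal D}_{p+q}(A)$ (computed as the homology of $\mathcal D\otimes_{\FF A} C_\bullet(D_A)$, using contractibility of $D_A$) and has $E^1$-term assembled from the agrarian homologies $H^{\mathcal D}_q(A_T)$ of the spherical parabolics.

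The crucial input is the vanishing $H^{\mathcal D}_q(A_T)=0$ for every non-empty spherical $T$ and every $q$. Each irreducible spherical Artin group has infinite cyclic centre, so any such $A_T$ contains a central infinite cyclic subgroup, and the agrarian analogue of Cheeger--Gromov vanishing (in the spirit of \cite[Theorem 3.9(6)]{HennekeKielak2021}) then forces the required vanishing. If the given agrarian map $\FF A\to\mathcal D$ is not already Hughes-free, I would first invoke the universal division ring of fractions machinery from \Cref{sec:kaz} (exploiting that Artin groups to which the theorem applies are locally indicable/RFRS so that $\DF{A}$ exists and is universal) to reduce to the Hughes-free case, where the vanishing on the amenable centre is standard.

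Granted the parabolic vanishing, the spectral sequence collapses onto the row $q=0$, and the combinatorial identification of $E^\infty_{p,0}$ carried out in the $\ell^2$ case by Okun--Schreve \cite[Section 4, Theorem 5.2]{OkunSchreveTorsion} carries over verbatim to identify the $E^\infty_{p,0}$ line with $\widetilde H_{p-1}(\partial Q_A;\FF)\otimes_\FF \mathcal D$. Taking $\mathcal D$-dimensions yields $b^{\mathcal D}_p(A;\FF)=\widetilde b_{p-1}(\partial Q_A;\FF)$, while the equality $b^{(2)}_p(A;\FF)=\widetilde b_{p-1}(\partial Q_A;\FF)$ is the Okun--Schreve result already cited in the paragraph preceding the theorem. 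I expect the main obstacle to be the parabolic vanishing step for an \emph{arbitrary} agrarian map, rather than the one coming from $\DF{A}$; the reduction via universality of $\DF{A}$ is what makes this work, so the proof hinges on the results of \Cref{sec:kaz}.
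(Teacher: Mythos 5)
Your overall strategy matches the paper's: identify the Deligne complex $D_A$ and its strict fundamental domain $Q_A$, use the key fact that the spherical parabolics $A_T$ are $\mathcal D$-acyclic (and have vanishing $\FF$-homology gradient) because each contains a central infinite cyclic subgroup, and then collapse the Davis--Okun spectral sequence onto the $q=0$ line. The paper packages the last step a little differently --- it assembles classifying spaces $BA_\sigma$ for the parabolics over the barycentric subdivision of $Q_A$ into a cover satisfying the weakened hypotheses of \Cref{rem agrarian spaces} and \Cref{rem gradient spaces}, and then quotes \Cref{prop:nerveHomology} and \Cref{prop:nerveAgrarian} --- but the underlying spectral-sequence input is the same as what you describe.

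The detour via universality is a genuine gap, for two reasons. First, it goes in the wrong direction: universality gives $\rk_{\DF{A}}\geqslant\rk_{\mathcal D}$, which translates to $b_p^{\DF{A}}(A_T)\leqslant b_p^{\mathcal D}(A_T)$, so knowing that the Hughes-free Betti numbers of a parabolic vanish tells you nothing about $b_p^{\mathcal D}(A_T)$, which is what the collapse actually requires. Second, the hypotheses of \Cref{thm:jaikinUniv} (residually locally indicable and amenable) are not part of the hypotheses of the theorem, which only asks for a residually finite Artin group satisfying the $K(\pi,1)$ conjecture, so even invoking the existence and universality of $\DF{A}$ is unjustified in general. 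Fortunately the reduction is unnecessary: the central $\Z$-subgroup $Z\leqslant A_T$ is $\mathcal D$-acyclic for \emph{any} agrarian embedding $\FF A\hookrightarrow\mathcal D$ (tensoring the resolution $\FF Z\xrightarrow{\,\cdot(z-1)\,}\FF Z\to\FF$ with $\mathcal D$ yields $\mathcal D\xrightarrow{\,\cdot(z-1)\,}\mathcal D$, and the image of $z-1$ is a nonzero element of the skew field $\mathcal D$, hence a unit), and the agrarian analogue of the Cheeger--Gromov vanishing for groups with an infinite amenable normal subgroup, which is the tool the paper invokes and is available in \cite{HennekeKielak2021}, then gives $H_*^{\mathcal D}(A_T)=0$ directly. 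With that in hand your spectral sequence argument does match the paper's computation, and no appeal to \Cref{sec:kaz} is needed.
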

\begin{proof}
In the action of $A$ on $D_A$, the non-trivial stabilisers have a central $\Z$ subgroup and so they are $\mc D$-acyclic and have vanishing $\FF$-homology growth.  We take a poset of spaces $\mc X$ over the (barycentric subdivision of the) strict fundamental domain $Q$ of $A$, where we assign a classifying space $BA_\sigma$ to each $\sigma\in Q$.  Now, we apply \Cref{rem agrarian spaces} and \Cref{rem gradient spaces}.
\end{proof}

\begin{rem}\label{sec strict fund}
The above argument applies to residually finite groups $G$ acting on a contractible complex with strict fundamental domain and $\mc D$-acyclic stabilisers---whenever $G$ admits an agrarian embedding $\FF G\to \mc D$.
\end{rem}

\subsection{Complements of hyperplane arrangements}
Let $\cala$ be a collection of affine hyperplanes in $\C^k$ and let $\Sigma(\cala)$ denote their union.  Let $M(\cala)\coloneqq \C^k \smallsetminus \Sigma(\cala)$.  The \emph{rank} of $M(\cala)$ is the maximum codimension $n$ of any nonempty intersection of hyperplanes in $\cala$.  By \cite[Proposition~2.1]{DavisJanuszkiewiczLeary2007} the ordinary Betti numbers satisfy  $\widetilde{b}_p(M(A);\FF)=0$ except possibly when $p=n$.

\begin{thm}\label{hyperplanes}
    Let $\FF$ be a skew field, $\cala$ be an affine hyperplane arrangement in $\C^k$ of rank $n$, and let $\Gamma\coloneqq\pi_1M(A)$. 
    \begin{enumerate}
        \item If $\Gamma$ is residually finite, then $b^{(2)}_p(M(\cala);\FF)=\widetilde{b}_p(M(\cala);\FF)$ which equals zero except possibly when $p=n$.
        \item If $\alpha\colon\FF\Gamma\to\mc D$ is an agrarian map, then
        $b^\mc D_p(M(\cala))=\widetilde{b}_p(M(\cala);\FF)$ which equals zero except possibly when $p=n$.
    \end{enumerate} 
\end{thm}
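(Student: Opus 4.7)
The plan is to apply the framework of Subsections 3.1 and 3.2 --- specifically via \Cref{rem gradient spaces} and \Cref{rem agrarian spaces} --- by adapting the cover of $M(\cala)$ used by Davis--Januszkiewicz--Leary. Their Proposition~2.1 (cited in the statement of the theorem) already supplies the vanishing $\widetilde{b}_p(M(\cala);\FF) = 0$ for $p \neq n$, so what remains is to establish the two equalities $b^{(2)}_p(M(\cala);\FF) = \widetilde{b}_p(M(\cala);\FF)$ and $b^{\mc D}_p(M(\cala)) = \widetilde{b}_p(M(\cala);\FF)$.

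First, I would construct a poset of spaces $\{V_\sigma\}_{\sigma \in \mathsf{P}}$ on a compact model of $M(\cala)$, with $\mathsf{P}$ derived from the intersection lattice $L(\cala)$. For each positive-codimension flat $F$, the piece $V_F$ is built as the intersection with $M(\cala)$ of a tubular neighborhood of $F$, which retracts onto a subspace with a positive-dimensional complex torus direct factor coming from a transverse slice. Consequently $\pi_1(V_F)$ splits as a direct product with a nontrivial free abelian factor, so $V_F$ is a $K(\pi,1)$ for an infinite amenable RFRS group. The minimal strata of $\mathsf{P}$ contribute the \emph{vertex} pieces required by condition~(iv) of the confident-cover definition.

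Second, I would verify the hypotheses of \Cref{rem gradient spaces} and \Cref{rem agrarian spaces}: $\pi_1$-injectivity of each inclusion $V_\sigma \hookrightarrow M(\cala)$ follows from the local product structure; vanishing of $b^{\mc D}_\bullet(V_\sigma)$ follows from the free abelian direct factor via \cite[Theorem~3.9(6)]{HennekeKielak2021}; and the analogous vanishing of the $\FF$-homology gradient follows from the Linnell--L\"uck--Sauer theorem \cite{LinnellLuckSauer}. \Cref{prop:nerveHomology} and \Cref{prop:nerveAgrarian} then identify both invariants with the reduced $\FF$-cohomology of the nerve $K$ of $\{V_\sigma\}$.

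Third, I would identify this nerve computation with $\widetilde{b}_p(M(\cala);\FF)$ itself. By construction the nerve is order-isomorphic to an augmentation of $L(\cala)$, whose reduced cohomology with field coefficients recovers $H^\bullet(M(\cala);\FF)$ via the Goresky--MacPherson formula --- this is precisely the combinatorial engine of the Davis--Januszkiewicz--Leary argument. Combining this identification with their vanishing result yields both stated equalities and concentrates the invariants in degree $n$. The main obstacle is the first step: producing a cover simultaneously satisfying (a)~the closure and $\pi_1$-injectivity conditions of a confident cover, (b)~the amenable product structure ensuring that each piece's relevant invariants vanish, and (c)~a nerve realising the intersection lattice in the manner required by step three. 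This essentially amounts to recasting the Davis--Januszkiewicz--Leary construction in the confident-cover language.
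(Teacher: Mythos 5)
Your high-level plan is the right one — use the Davis--Januszkiewicz--Leary cover of $M(\cala)$ by deleted neighbourhoods of flats, invoke \Cref{rem gradient spaces} and \Cref{rem agrarian spaces} to reduce to a nerve computation, and then match that computation against the known Betti numbers of $M(\cala)$. This is essentially the structure of the paper's proof. However, there is a genuine gap in step two: you claim that each piece $V_F$, having a positive-dimensional $(\C^*)^d$ direct factor, is therefore a classifying space for an \emph{amenable} (indeed amenable RFRS) group. This is false. A free abelian direct factor does not make a group amenable; the complementary factor is (the fundamental group of) a projectivised central arrangement complement, and these are not amenable in general --- for the braid arrangement, for instance, one gets pure braid groups. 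Consequently you cannot invoke \cite[Theorem~3.9(6)]{HennekeKielak2021} or \cite[Theorem~0.2]{LinnellLuckSauer}, both of which are statements about amenable groups, to kill the invariants on the pieces.

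The fix is exactly what the weakened hypotheses of \Cref{rem gradient spaces,rem agrarian spaces} are designed for: one does not need the pieces to have amenable $\pi_1$, only that their $\mc D$-Betti numbers (resp.\ $\FF$-homology gradients) vanish. The paper establishes this separately in \Cref{lem central hyperplanes}: a nonempty \emph{central} arrangement complement splits as $M(\cala) \cong S^1 \times B$ (DJL, proof of their Lemma~5.2), and the vanishing of both invariants then falls out of the K\"unneth formula and the vanishing of the corresponding invariant for $\Z = \pi_1 S^1$ --- no amenability of the whole group is used or needed. Your intuition about the $S^1$-factor was pointing in exactly this direction; the error was promoting it to amenability of the entire $\pi_1(V_F)$. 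A secondary imprecision: the final identification with $\widetilde b_p(M(\cala);\FF)$ is not routed through the Goresky--MacPherson formula in the paper; instead DJL supply directly that the nerve $N(\mc U)$ is contractible and that there is a subcover $\mc U_{\mathrm{sing}}$ of a deleted neighbourhood of $\Sigma(\cala)$ with $H_\ast(N(\mc U), N(\mc U_{\mathrm{sing}}))$ concentrated in degree $n$, which plugs straight into \Cref{prop:nerveHomology,prop:nerveAgrarian}. The Goresky--MacPherson formula involves sums over the intersection lattice of shifted homologies of order-complex intervals, and recovering the stated equality from it would require additional bookkeeping you have not spelled out.
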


We will need a lemma.

\begin{lem}\label{lem central hyperplanes}
Let $\FF$ be a skew field, $\cala$ be a non-empty central affine hyperplane arrangement in $\C^k$ of rank $n$, and let $\Gamma\coloneqq\pi_1M(A)$. 
    \begin{enumerate}
        \item If $\Gamma$ is residually finite, then $b^{(2)}_p(M(\cala);\FF)=0$ for all $n$ independently of a choice of residual chain.
        \item If $\alpha\colon\FF\Gamma\to\mc D$ is an agrarian map, then
        $b^\mc D_p(M(\cala))=0$ for all $n$.
    \end{enumerate} 
\end{lem}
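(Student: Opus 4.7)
The plan is to exploit the free $\C^*$-action by scalar multiplication on $M(\cala)$, which exists because $\cala$ is central. Picking any $H_0 \in \cala$ with defining linear form $\ell_0$, the map
\[
    M(\cala) \longrightarrow \mathbb{P}M(\cala) \times \C^*, \qquad x \longmapsto \bigl([x], \ell_0(x)\bigr)
\]
is a $\C^*$-equivariant homeomorphism, so $M(\cala) \simeq S^1 \times Y$ where $Y := \mathbb{P}M(\cala)$ is an open subset of $\mathbb{P}^{k-1}(\C)$ and hence has the homotopy type of a finite CW complex. This gives $\Gamma \cong \Z \times \pi_1(Y)$ with the $\Z$-factor \emph{central} and generated by the class $t$ of a loop around the origin in $\C^*$.

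For part (2), the cellular chain complex of the universal cover splits as
\[
    C_\bullet(\widetilde{M(\cala)}; \FF) \cong C_\bullet(\R; \FF) \otimes_{\FF} C_\bullet(\widetilde Y; \FF)
\]
as complexes of $\FF\Gamma \cong \FF\Z \otimes_{\FF} \FF H$-modules (with $H := \pi_1 Y$), where the first factor is the two-term complex $\FF\Z \xrightarrow{t-1} \FF\Z$. Since $\alpha \colon \FF\Gamma \to \mc D$ is injective and $\mc D$ is a skew field, $\alpha(t-1)$ is a unit in $\mc D$. After tensoring with $\mc D$ over $\FF\Gamma$, the $\R$-direction becomes $\mc D \xrightarrow{\cdot (t-1)} \mc D$, which is an isomorphism. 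Viewing the result as a double complex, one direction is acyclic, so the total complex is acyclic and $b_p^{\mc D}(M(\cala)) = 0$ for all $p$.

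For part (1), fix a residual chain $(\Gamma_n)$ and set $K_n := \Gamma_n \cap \Z = k_n\Z$, identifying $\Z$ with the central subgroup of $\Gamma$. Because $\Gamma_n$ has finite index in $\Gamma$, $k_n$ is finite; because $\bigcap_n \Gamma_n = \{1\}$, the nested sequence $K_n$ is eventually strict and $k_n \to \infty$. The translation action of $\R$ on $\widetilde{M(\cala)} \simeq \R \times \widetilde Y$ commutes with $\Gamma$ and descends to a \emph{free} action of $S^1_{k_n} := \R/K_n$ on $X_n := \widetilde{M(\cala)}/\Gamma_n$, the freeness following from $H$ acting freely on $\widetilde Y$. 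The quotient $B_n := X_n/S^1_{k_n}$ is a finite cover of $Y$ of degree $d_n/k_n$, where $d_n := [\Gamma : \Gamma_n]$. Since $Y$ has a finite CW model, $b_p(B_n;\FF) \leqslant C \cdot d_n/k_n$ for a constant $C$ depending only on $Y$ and $\FF$. The Serre spectral sequence of the principal $S^1$-bundle $S^1 \to X_n \to B_n$ has only two nonzero rows (with trivial local coefficients), yielding
\[
    b_p(X_n;\FF) \;\leqslant\; b_p(B_n;\FF) + b_{p-1}(B_n;\FF) \;\leqslant\; 2C \cdot d_n/k_n.
\]
Dividing by $d_n$ and letting $n \to \infty$ shows $\btwo{p}(M(\cala);\FF) = 0$ for every $p$, independently of the residual chain.

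The main technical care lies in verifying that the lifted $S^1_{k_n}$-action on $X_n$ is genuinely free and that $B_n \to Y$ is a covering of degree precisely $d_n/k_n$; both follow from a direct computation in the $\Gamma$-equivariant product decomposition $\widetilde{M(\cala)} \cong \R \times \widetilde Y$ using that the central $\Z$ acts by integer translations on the first factor.
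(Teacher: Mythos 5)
Your proof is correct and follows the same overall strategy as the paper (the product decomposition $M(\cala)\simeq S^1\times Y$ of Davis--Januszkiewicz--Leary, K\"unneth for part (2), and a growth estimate for part (1)), but in part (1) you are actually more careful than the published argument. The paper asserts that \emph{every} finite cover $M_i$ of $M(\cala)$ can itself be written as a product $B_i\times S^1$ and then applies K\"unneth. This product claim is not obvious and in general fails: a finite-index subgroup of $\Z\times H$ need not be a product subgroup, so the corresponding cover is only a (possibly non-trivial) principal $S^1$-bundle over a finite cover of $Y$. You sidestep this entirely by working with the $S^1$-bundle $S^1\to X_n\to B_n$ directly and replacing the K\"unneth equality with the Serre/Gysin inequality $b_p(X_n;\FF)\leqslant b_p(B_n;\FF)+b_{p-1}(B_n;\FF)$, which is all that is needed and holds without any triviality hypothesis on the bundle. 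You also make explicit the point, used implicitly in the paper's double limit, that the degree $k_n$ of the circle direction tends to infinity because $\bigcap_n\Gamma_n=\{1\}$ forces $\bigcap_n(\Gamma_n\cap\Z)=\{0\}$. Your part (2) is the same K\"unneth/tensor-product computation the paper invokes, with the acyclicity of the $t-1$ direction over $\mc D$ spelled out. One small point worth recording if you write this up: the homotopy-finiteness of $Y=\mathbb{P}M(\cala)$, which you use to bound $b_*(B_n)$ linearly in the covering degree, is standard for complements of projective arrangements but deserves a citation.
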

\begin{proof}
    In this case we have $M(\cala)=S^1\times B$ where $B=M(\cala)/S^1$ by \cite[Proof of Lemma~5.2]{DavisJanuszkiewiczLeary2007}.  Both results are easy applications of the K\"unneth formula and vanishing of the relevant invariant for $\Z=\pi_1S^1$. We spell out the details in the first case to highlight the independence of the residual chain.
    
    Observe that every finite cover $M_i$ of $M(\cala)$ can be written
    as $B_i\times S^1$ such that $S^1$ has $m$ one cells.  We have the index of the cover $|M:M_i|=m|B:B_i|$.  Now, we compute via the K\"unneth formula that
    \begin{align*}
    b^{(2)}_p(M,(M_i);\FF)&=\lim_{i\to\infty}\frac{b_p(M_i)+b_{p-1}(M_i)}{|M:M_i|}\\
    &=\lim_{i,m\to\infty}\frac{b_p(M_i)+b_{p-1}(M_i)}{m|B:B_i|}\\
    &=0.\qedhere
    \end{align*}
\end{proof}

\begin{proof}[Proof of \Cref{hyperplanes}]
    By \cite[Section~3]{DavisJanuszkiewiczLeary2007} there is a cover $\mc U$ of $M(\cala)$ by central subarrangements $U_\sigma$ such that $\pi_1 U_\sigma \to \Gamma$ is injective and the nerve $N(\mc U)$ is contractible.  There is also a cover of a deleted neighbourhood of $\Sigma(A)$, denoted $\mc U_{\mathrm{sing}}$, such that $H_p(N(\mc U),N(\mc U_{\mathrm{sing}}))$ is concentrated in degree $n$.  It follows from \Cref{lem central hyperplanes} that either the homology gradients or the $\mc D$-Betti numbers vanish.  In particular, by Remarks~\ref{rem gradient spaces} and \ref{rem agrarian spaces} the cover $\mc U$ is confident and the results follow.
\end{proof}

\section{A lower bound for homology gradients} \label{sec:kaz}

Let $R$ be a ring, let $\mathcal D$ be a skew-field, and let $\varphi \colon R \rightarrow \mathcal D$ be a ring homomorphism. There is a rank function $\rk_{\mathcal D, \varphi} \colon \mathrm{Mat}(R) \rightarrow \R_{\geqslant 0}$ defined by $\rk_{\mathcal D, \varphi} A = \rk \varphi_* A$, where $\varphi_* A$ is the matrix obtained by applying the homomorphism $\varphi$ to every entry of $A$ and $\rk \varphi_* A$ is the rank of $\varphi_* A$ as a matrix over $\mathcal D$. If $\varphi \colon R \hookrightarrow \mathcal D$ is an epic embedding (i.e.~$\varphi(R)$ generates $\mathcal D$ as a skew-field) and $\rk_{\mathcal D, \varphi} \geqslant \rk_{\mathcal E, \psi}$ for every skew-field $\mathcal E$ and every ring homomorphism $\psi \colon R \rightarrow \mathcal E$, then $\mathcal D$ is said to be a \textit{universal division ring of fractions} for $R$. If $\mathcal D$ is a universal division ring of fractions for $R$, it is then unique up to $R$-isomorphism \cite[Theorem 4.4.1]{cohn1995skew}.

\begin{thm}[Jaikin-Zapirain {\cite[Corollary 1.3]{JaikinZapirain2020THEUO}}] \label{thm:jaikinUniv}
    Let $G$ be a residually (locally indicable and amenable) group and let $\mathbb F$ be a skew-field. Then the Hughes-free division ring $\DF{G}$ exists and is the universal division ring of fractions of $\mathbb FG$.
\end{thm}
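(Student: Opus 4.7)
The plan is to reduce to the locally indicable amenable case (already covered by \cref{thm:jaikin}) via a residual approximation argument inside the space of Sylvester matrix rank functions on $\mathbb F G$. Choose a directed system $(N_i)_{i\in I}$ of normal subgroups of $G$ with each $G/N_i$ locally indicable amenable and $\bigcap_i N_i = \{1\}$. For each $i$, \cref{thm:jaikin} supplies a Hughes-free division ring $\mathcal D_i$ for $\mathbb F[G/N_i]$, which induces a Sylvester matrix rank function $\rho_i$ on $\mathbb F G$ by pulling back along the projection $\mathbb F G \twoheadrightarrow \mathbb F[G/N_i]$. The space of Sylvester rank functions on $\mathbb F G$ is compact in the topology of pointwise convergence on $\bigcup_n \Mat_n(\mathbb F G)$, so after passing to a cofinal subsystem one extracts a limit $\rho = \lim_i \rho_i$. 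The Cohn--Malcolmson correspondence then produces an epic homomorphism from $\mathbb F G$ to a von Neumann regular ring $\mathcal R$ realising $\rho$; one checks that integer-valuedness of $\rho$ (inherited from the $\rho_i$) forces $\mathcal R$ to be a skew-field, which we call $\mathcal D_{\mathbb F G}$.

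Next one verifies Hughes-freeness of $\mathcal D_{\mathbb F G}$: for every finitely generated subgroup $H \leqslant G$ with $K \trianglelefteqslant H$ and $H/K \cong \mathbb Z$, and for every $n \geqslant 1$ and lift $t \in H$ of a generator, the sum $\sum_{j=0}^{n-1} t^j \mathcal D_{\mathbb F K}$ must be direct in $\mathcal D_{\mathbb F G}$. This directness is encoded in the non-vanishing of $\rho$ on certain block matrices, and so passes to a limit of rank functions each of which already satisfies the condition via the Hughes-freeness of the $\mathcal D_i$. For universality, let $\psi \colon \mathbb F G \to \mathcal E$ be any other epic embedding into a skew-field, with associated rank function $\rk_{\mathcal E, \psi}$. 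Each $\rho_i$ dominates any rank function that factors through the quotient $G/N_i$ by the universality half of \cref{thm:jaikin} applied to $\mathbb F[G/N_i]$. The harder task is to control arbitrary $\psi$: one invokes Hughes' uniqueness theorem, which says that a Hughes-free division ring over a locally indicable group ring is unique up to isomorphism, to show that the induced rank function on each finitely generated subring of $\mathbb F G$ is dominated by $\rho_i$ for $i$ large, and hence by $\rho$ in the limit.

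The principal obstacle is the universality step, even granting the amenable base case. Existence is a relatively formal consequence of the compactness of rank functions together with Malcolmson's description of division rings of fractions. Dominating an arbitrary skew-field-valued rank function, however, requires a delicate matrix-theoretic argument: given a matrix $A$ over $\mathbb F G$ whose $\psi$-rank one wishes to bound by $\rho(A)$, one must construct, for each $i$, comparison data between $A$ and its image over $\mathbb F[G/N_i]$ that is genuinely controlled by $\psi$. It is here that the Hughes-freeness property plays a dual role --- both in the construction of $\mathcal D_{\mathbb F G}$ and in the extraction of universality --- and it is essentially this intricate interplay that constitutes Jaikin-Zapirain's contribution.
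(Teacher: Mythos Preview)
The paper does not prove this theorem. It is stated with attribution to Jaikin-Zapirain and cited as \cite[Corollary~1.3]{JaikinZapirain2020THEUO}, with no proof given; the paper simply uses it as a black box to derive \cref{thm:jaikinRank} and \cref{thm:kaz}. So there is nothing to compare your proposal against here.

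That said, a brief comment on the sketch itself. The overall shape --- pull back Sylvester rank functions from the quotients $G/N_i$, pass to a limit by compactness, and extract an epic division ring from the limit rank --- is indeed the strategy Jaikin-Zapirain employs. However, your reduction has a circularity: you invoke ``the universality half of \cref{thm:jaikin}'' for the quotients $G/N_i$, but \cref{thm:jaikin} as stated in this paper asserts only \emph{existence} of $\mathcal D_{\mathbb F H}$ for locally indicable amenable $H$, not universality. Universality in the amenable base case is itself a substantial part of Jaikin-Zapirain's argument (going through the Ore/crossed-product structure and an induction on Hirsch-type invariants), and cannot simply be assumed. Your final paragraph tacitly acknowledges that the universality step is where the real content lies, but the sketch as written does not supply it.
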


We note that \cref{thm:jaikinUniv} holds for RFRS groups, since they are residually poly-$\Z$ (see, e.g., \cite[Proposition 4.4]{JaikinZapirain2020THEUO}). The main theorem of this section will follow quickly from the observation that $\DF{G}$-Betti numbers scale under taking finite index subgroups and another result of Jaikin-Zapirain (\cite[Corollary 1.6]{JaikinZapirain2020THEUO}), which he states for $\ell^2$-Betti numbers of CW complexes but also holds for agrarian homology. For the convenience of the reader, we reproduce a proof in the agrarian setting here.

\begin{thm}[Jaikin-Zapirain] \label{thm:jaikinRank}
    Let $\mathbb F$ be a skew-field and suppose that $G$ is a group of type $\FP_{n+1}(\mathbb F)$ for some $n \in \N$ such that $\DF{G}$ exists and is the universal division ring of fractions of $\mathbb FG$. Then $b_m^{\DF{G}}(G) \leqslant b_m(G; \mathbb F)$ for all $m \leqslant n$.
\end{thm}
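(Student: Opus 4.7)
The plan is to express both sides of the inequality as rank deficits in a single chain complex of free $\mathbb FG$-modules, and then to invoke the universal property of $\DF{G}$ to compare $\DF{G}$-ranks with $\mathbb F$-ranks pointwise on the differentials.

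First I would fix a projective resolution $P_\bullet \to \mathbb F$ of the trivial $\mathbb FG$-module with $P_i$ finitely generated for $i \leqslant n+1$, which exists by the $\FP_{n+1}(\mathbb F)$ hypothesis. After adding finitely generated free direct summands in low degrees (the usual ``Eilenberg swindle-free'' trick), one may assume that $P_i \cong (\mathbb FG)^{n_i}$ is finitely generated free for each $i \leqslant n+1$, so that the differential $\partial_i$ is represented by a matrix $A_i \in \Mat(\mathbb FG)$.

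Next, applying the augmentation $\varepsilon \colon \mathbb FG \to \mathbb F$ and the inclusion $\iota \colon \mathbb FG \hookrightarrow \DF{G}$ yields chain complexes of vector spaces computing $H_\bullet(G;\mathbb F)$ and $H_\bullet^{\DF{G}}(G)$ respectively. Because $P_m$ and $P_{m+1}$ are finitely generated free for every $m \leqslant n$, the rank-nullity theorem gives
\begin{align*}
    b_m(G;\mathbb F) &= n_m - \rk_{\mathbb F,\varepsilon}(A_m) - \rk_{\mathbb F,\varepsilon}(A_{m+1}), \\
    b_m^{\DF{G}}(G) &= n_m - \rk_{\DF{G},\iota}(A_m) - \rk_{\DF{G},\iota}(A_{m+1}).
\end{align*}

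Finally, since by hypothesis $\DF{G}$ is the universal division ring of fractions of $\mathbb FG$, the definition of the universal rank function (applied with the target skew field $\mathbb F$ and the ring homomorphism $\varepsilon$) yields $\rk_{\DF{G},\iota}(A) \geqslant \rk_{\mathbb F,\varepsilon}(A)$ for every matrix $A$ over $\mathbb FG$, and in particular for $A_m$ and $A_{m+1}$. Comparing the two formulae above immediately gives $b_m^{\DF{G}}(G) \leqslant b_m(G;\mathbb F)$ for all $m \leqslant n$. The only mild subtlety is the reduction to a free (not merely projective) resolution in the relevant degrees, but this is standard and neither rank on the right-hand side changes under the addition of matching free summands, so the formulae above hold as stated.
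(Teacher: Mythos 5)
Your proof is correct and takes essentially the same approach as the paper: both express $b_m^{\DF{G}}(G)$ and $b_m(G;\mathbb F)$ as rank deficits of the matrices $\partial_m$, $\partial_{m+1}$ in a finitely generated free resolution in degrees $\leqslant n+1$, and then use the universality of $\DF{G}$ (applied to the augmentation $\varepsilon \colon \mathbb FG \to \mathbb F$) to get $\rk_{\DF{G},\iota} \geqslant \rk_{\mathbb F,\varepsilon}$ on matrices. The paper simply starts from a finitely generated free resolution directly rather than passing through a projective one, which is an inessential difference.
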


\begin{proof}
    The embedding $\iota \colon \mathbb FG \hookrightarrow \DF{G}$ and the augmentation map $\alpha \colon \mathbb FG \rightarrow \mathbb F$ induce rank functions on $\mathrm{Mat}(\mathbb FG)$ which we denote by $\rk_G$ and $\rk_{\mathbb F}$, respectively. By universality, $\rk_G \geqslant \rk_\mathbb F$. 
    
    Let $C_\bullet \rightarrow \mathbb F$ be a free-resolution of the trivial $\mathbb FG$-module $\mathbb F$ such that $C_m$ is finitely generated for all $m \leqslant n+1$. For all $m \leqslant n+1$, let $d_m$ be an integer such that $C_m \cong \mathbb FG^{d_m}$ and view the boundary maps $\partial_m \colon C_m \rightarrow C_{m-1}$ as matrices over $\mathbb FG$. The homologies we are interested in are $H_m(\DF{G} \otimes_{\mathbb FG} C_\bullet)$ and $H_m(k \otimes_{\mathbb FG} C_\bullet)$ and note that the differentials $\DF{G} \otimes \partial_m$ and $k \otimes \partial_m$ correspond to the matrices $\iota_* \partial_m$ and $\alpha_* \partial_m$ under the identifications $\DF{G} \otimes_{\mathbb FG} C_m \cong \DF{G}^{d_m}$ and $\mathbb F \otimes_{\mathbb FG} C_m \cong \mathbb F^{d_m}$. Therefore
    \begin{align*}
        b_m^{\DF{G}}(G) &= d_m - \rk_G \partial_m - \rk_G \partial_{m+1} \\ &\leqslant d_m - \rk_{\mathbb F} \partial_m - \rk_{\mathbb F} \partial_{m+1} \\
        &= b_m(G; \mathbb F) \qedhere
    \end{align*}
\end{proof}

As a consequence, we obtain that agrarian Betti numbers bound homology gradients from below.

\begin{thm}\label{thm:kaz}
    Let $\mathbb F$ be a skew-field and let $G$ be a group of type $\FP_{n+1}(\mathbb F)$ such that $\DF{G}$ exists and is the universal division ring of fractions of $\mathbb FG$. If $H \leqslant G$ is any subgroup of finite index, then
    \[
        b_m^{\DF{G}} (G) \leqslant \frac{b_m(H; \mathbb F)}{[G:H]}
    \]
    for all $m \leqslant n$. In particular, if $G$ is residually finite and $(G_i)_{i \in \N}$ is a residual chain of finite-index subgroups, then $b_m^{\DF{G}} (G) \leqslant b_m^{(2)}(G, (G_i); \mathbb F)$
    for all $m \leqslant n$.
\end{thm}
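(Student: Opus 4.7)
The plan is to deduce \Cref{thm:kaz} from \Cref{thm:jaikinRank} applied to $H$, combined with the scaling formula for Hughes-free Betti numbers under finite-index inclusions. The key observation is that the right-hand side $b_m(H;\FF)/[G:H]$ is exactly what one gets by running the inequality of \Cref{thm:jaikinRank} at the level of $H$ and then converting $b_m^{\DF{H}}(H)$ back to $b_m^{\DF{G}}(G)$ using multiplicativity.

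First, I would verify that the hypotheses of \Cref{thm:jaikinRank} transfer from $G$ to $H$. Since $H$ has finite index in $G$ and $G$ is of type $\FP_{n+1}(\FF)$, so is $H$. To argue that $\DF{H}$ exists and is the universal division ring of fractions of $\FF H$, one restricts the embedding $\FF G \hookrightarrow \DF{G}$ to $\FF H$ and takes the division closure; standard properties of Hughes-free embeddings imply that the resulting division subring is Hughes-free for $\FF H$, and in the settings of interest (for instance, whenever \Cref{thm:jaikinUniv} applies, such as residually (locally indicable amenable) groups, which include all RFRS groups) the relevant class of groups is closed under passage to subgroups, so universality of $\DF{H}$ is inherited.

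With these hypotheses in place, \Cref{thm:jaikinRank} applied to $H$ gives
\[
    b_m^{\DF{H}}(H) \leqslant b_m(H;\FF) \quad \text{for all } m \leqslant n.
\]
By \cite[Lemma~6.3]{Fisher2021improved} the Hughes-free Betti numbers scale by the index under finite-index inclusions, so $b_m^{\DF{H}}(H) = [G:H] \cdot b_m^{\DF{G}}(G)$. Substituting and dividing through by $[G:H]$ yields the first inequality of the theorem. For the residually finite statement, I would then apply this inequality to each $G_i$ in the residual chain to obtain $b_m^{\DF{G}}(G) \leqslant b_m(G_i;\FF)/[G:G_i]$ for every $i$, and pass to the $\limsup$ over $i$ to conclude $b_m^{\DF{G}}(G) \leqslant b_m^{(2)}(G,(G_i);\FF)$.

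The main potential obstacle is the descent step: confirming that $\DF{H}$ is well-defined and universal granted only that $\DF{G}$ is. This requires a Hughes-free descent argument rather than lying entirely within the scope of the statement of \Cref{thm:jaikinRank}, but it is routine in all concrete settings relevant to this paper. Once this is settled the remainder is a one-line composition of the two available inputs.
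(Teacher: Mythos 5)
Your proof is correct and takes essentially the same approach as the paper's: combine the scaling identity $b_m^{\DF{H}}(H) = [G:H]\,b_m^{\DF{G}}(G)$ from \cite[Lemma~6.3]{Fisher2021improved} with the rank inequality of \Cref{thm:jaikinRank} applied to $H$, then pass to the limit supremum over the residual chain. You are right to flag the descent of universality from $\DF{G}$ to $\DF{H}$ as a point the paper treats silently; in the settings where \Cref{thm:jaikinUniv} applies (a subgroup-closed class of groups) this is indeed automatic, but it is a hypothesis the terse printed proof leaves unverified.
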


\begin{proof}
    By \cite[Lemma 6.3]{Fisher2021improved}, $[G:H] \cdot b_m^{\DF{G}}(G) = b_m^{\DF{H}}(H)$, and by \cref{thm:jaikinRank}. The second claim is an immediate consequence of the first. \qedhere
\end{proof}

\section{Applications to fibring} \label{sec:fibring}

A group $G$ is \textit{algebraically fibred} (or, simply, \textit{fibred}) if it admits a nontrivial homomorphism $G \rightarrow \Z$ with finitely generated kernel. More generally, if $\mathcal P$ is a finiteness property (e.g.~type $\F_n$ or $\FP_n(R)$ for some ring $R$, see \cref{subsec:finprops} for definitions), we say that $G$ is \textit{$\mathcal P$-fibred} if there is a nontrivial homomorphism $G \rightarrow \Z$ with kernel of type $\mathcal P$.

\begin{thm}\label{thm no fibring}
    Let $\mathbb F$ be a skew-field and let $G$ be a RFRS group of type $\FP_{n+1}(\mathbb F)$.  If $G$ is not virtually $\FP_n(\FF)$-fibred, then for every residual chain of finite-index subgroups $(G_i)_{i \in \N}$, we have $b_m^{(2)}(G, (G_i); \mathbb F)>0$
    for some $m \leqslant n$.
\end{thm}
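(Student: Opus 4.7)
The plan is to argue by contrapositive, using the lower bound on homology gradients supplied by \cref{thm:lowerBound} together with the fibring criterion of the first author in \cite{Fisher2021improved}.

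First, recall that since $G$ is RFRS, it is in particular residually (torsion-free poly-$\Z$) and hence residually (locally indicable amenable). By \cref{thm:jaikinUniv}, the Hughes-free division ring $\DF{G}$ exists and is the universal division ring of fractions of $\mathbb F G$. Therefore \cref{thm:lowerBound} (equivalently \cref{thm:kaz}) applies and yields
\[
    b_m^{\DF{G}}(G) \leqslant b_m^{(2)}(G, (G_i); \mathbb F)
\]
for all $m \leqslant n$ and every residual chain $(G_i)_{i \in \N}$ of finite-index normal subgroups of $G$.

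Now I argue the contrapositive: suppose there exists a residual chain $(G_i)_{i \in \N}$ such that $b_m^{(2)}(G, (G_i); \mathbb F) = 0$ for all $m \leqslant n$. By the inequality above, this forces $b_m^{\DF{G}}(G) = 0$ for all $m \leqslant n$. The key input is then the $\mathbb F$-coefficient version of the first author's improved algebraic fibring criterion from \cite{Fisher2021improved}, which says that a RFRS group $G$ of type $\FP_n(\mathbb F)$ whose Hughes-free Betti numbers vanish through degree $n$ is virtually $\FP_n(\mathbb F)$-fibred. Applying this to $G$ produces a finite-index subgroup $H \leqslant G$ and a homomorphism $H \twoheadrightarrow \Z$ whose kernel is of type $\FP_n(\mathbb F)$, i.e.\ $G$ is virtually $\FP_n(\mathbb F)$-fibred, contradicting the hypothesis.

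The only subtlety is verifying that the fibring criterion applies with coefficients in an arbitrary skew-field $\mathbb F$ rather than $\mathbb Q$; this is precisely the setting treated in \cite{Fisher2021improved} once one has the universality of $\DF{G}$, which is exactly what \cref{thm:jaikinUniv} provides for RFRS groups. No other ingredient is required.
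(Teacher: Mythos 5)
Your argument is correct and is essentially the same as the paper's proof: both combine the lower bound $b_m^{\DF{G}}(G) \leqslant b_m^{(2)}(G,(G_i);\FF)$ from \cref{thm:kaz} with the virtual fibring criterion of \cite{Fisher2021improved}, and the only cosmetic difference is that you phrase the argument as a contrapositive while the paper argues directly.
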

\begin{proof}
    By \cite{Fisher2021improved} a RFRS group $G$ is virtually $\FP_n(\mathbb{F})$-fibred if and only if $b_i^{\mathcal{D}_{\mathbb{F}G}}(G) = 0$ for every $i \leqslant n$. 
    Since $G$ is not virtually $\FP_n(\FF)$-fibred,  we have $b_m^{\DF{G}}(G)>0$ for some $m\leqslant n$.  The result now follows from \Cref{thm:kaz}.
\end{proof}

The authors thank Robert Kropholler for communicating to us the following question due to Matthew Zaremsky: If a RAAG $A_L$ is virtually $\F_n$-fibred, is it $\F_n$-fibred? We are able to answer the analogous homological question over skew fields, $\Z$, and $\Z/m$ for $m \in \N_{>1}$.

\begin{thm}\label{thm:virtFibiffFib}
    Let $L$ be a finite flag complex and $R$ be either a skew field $\mathbb F$, $\Z$, or $\Z/m$ for some $m \in \N_{>1}$. Then the RAAG $A_L$ is virtually $\FP_n(R)$-fibred if and only if it is $\FP_n(R)$-fibred.
\end{thm}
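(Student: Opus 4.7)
The reverse direction is immediate, so the plan is to prove the forward direction. Assume $A_L$ is virtually $\FP_n(R)$-fibred and fix a finite-index subgroup $H \leqslant A_L$ together with an epimorphism $\varphi \colon H \twoheadrightarrow \Z$ whose kernel $K$ is of type $\FP_n(R)$. My strategy is to deduce that $L$ is $(n-1)$-acyclic over $R$, meaning $\widetilde H_j(L; R) = 0$ for $0 \leqslant j \leqslant n-1$; granted this, the classical theorem of Bestvina--Brady (extended to arbitrary coefficient rings via a Morse-theoretic argument insensitive to the ring) yields that the kernel $BB_L$ of the ``sum'' homomorphism $A_L \twoheadrightarrow \Z$ (sending every standard generator to $1$) is of type $\FP_n(R)$, whence $A_L$ is $\FP_n(R)$-fibred via this canonical homomorphism.

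Let $\mathbb F$ be any skew field that is an $R$-algebra. Since $\FP_n$ persists along ring extensions (\cref{subsec:finprops}), $K$ is also of type $\FP_n(\mathbb F)$, so $H$ is $\FP_n(\mathbb F)$-fibred. Being a subgroup of the RAAG $A_L$, $H$ is RFRS, and by the result of Fisher recalled in the proof of \cref{thm no fibring}, $b_i^{\DF{H}}(H) = 0$ for all $i \leqslant n$. Scaling under finite-index passage \cite[Lemma~6.3]{Fisher2021improved} transfers this vanishing to $A_L$, and the computation $b_i^{\DF{A_L}}(A_L) = \widetilde b_{i-1}(L; \mathbb F)$, which is the special case of \cref{thm:agrGraphProd} for a graph product of copies of $\Z$, gives $\widetilde H_j(L; \mathbb F) = 0$ for $0 \leqslant j \leqslant n-1$.

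It remains to reassemble these field-coefficient vanishings into the required $R$-coefficient vanishing. When $R$ is itself a skew field, there is nothing more to do. When $R = \Z$, take $\mathbb F$ ranging over $\Q$ and all $\mathbb F_p$: vanishing over $\Q$ forces $H_j(L; \Z)$ to be torsion in the range, while vanishing over each $\mathbb F_p$ together with the universal coefficient theorem forces $H_j(L; \Z)$ to be $p$-torsion-free for every prime $p$; combining these, $H_j(L; \Z) = 0$. When $R = \Z/m$, taking $\mathbb F = \mathbb F_p$ for every prime $p \mid m$ shows that $H_j(L; \Z)$ is finite of order coprime to $m$ in the range, so that both $H_j(L; \Z) \otimes \Z/m$ and $\Tor(H_{j-1}(L; \Z), \Z/m)$ vanish, and a further application of universal coefficients gives $H_j(L; \Z/m) = 0$. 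I anticipate the most delicate step to be the universal coefficient manoeuvre for $R = \Z/m$, although it is an elementary calculation once the $\mathbb F_p$-acyclicities are in hand.
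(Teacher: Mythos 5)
Your proof is correct and follows essentially the same route as the paper: reduce to showing $L$ is $(n-1)$-acyclic over $R$ via Fisher's characterisation of virtual $\FP_n(\mathbb F)$-fibring and the graph-product computation of $\DF{A_L}$-Betti numbers, then invoke Bestvina--Brady (in its arbitrary-coefficient form). You make the finite-index scaling step and the universal-coefficient bookkeeping for $\Z$ and $\Z/m$ a bit more explicit than the paper, but the architecture is identical.
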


\begin{proof}
    We begin with the case $R = \mathbb F$. Let $A_L$ be virtually $\FP_n(\mathbb F)$-fibred. By \cite{Fisher2021improved},  a RFRS group $G$ is virtually $\FP_n(\mathbb{F})$-fibred if and only if $b_i^{\mathcal{D}_{\mathbb{F}G}}(G) = 0$ for every $i \leqslant n$.  In particular this applies in the case $G=A_L$.  By \cref{thm:agrGraphProd}, 
    $b_i^{\mathcal{D}_{\mathbb{F}A_L}}(A_L)=0$ for every $i\leqslant n$ implies that $\widetilde{b}_i(L; \mathbb F) = 0$ for every $i \leqslant n-1$. By \cite[Main Theorem]{BestvinaBrady}, the Bestvina--Brady group is of type $\FP_n(\mathbb{F})$ and therefore $A_L$ is $\FP_n(\mathbb F)$-fibred.

    Now suppose $R = \Z/m$ for some $m \in \N_{>1}$ and suppose that $A_L$ is virtually $\FP_n(\Z/m)$-fibred. If $p$ is a prime factor of $m$, then there is a ring homomorphism $\Z/m \rightarrow \Z/p = \mathbb F_p$ and therefore $\mathbb F_p$ is a $\Z/m$-algebra. Thus, $A_L$ is virtually $\FP_n(\mathbb F_p)$-fibred. Therefore $b_i^{\mathcal D_{\mathbb F_p A_L}}(A_L) = 0$ for all $i \leqslant n$ and thus $\widetilde{b}_i(L; \mathbb F_p) = 0$ for all $i \leqslant n-1$ for all $i \leqslant n-1$ by \cref{thm:agrGraphProd}. Then $\widetilde{b}_i(L; \Z/m) = 0$ for all $i \leqslant n-1$, so $BB_L$ is of type $\FP_n(\Z/m)$.
    
    Finally, suppose $A_L$ is virtually $\FP_n$-fibred. In particular, $A_L$ is virtually $\FP_n(\mathbb F)$-fibred for every skew field $\mathbb F$, which implies that $L$ is $(n-1)$-acyclic over every field by \cref{thm:agrGraphProd}. Therefore $L$ is $(n-1)$-acyclic over $\Z$, which implies that $BB_L$ is of type $\FP_n$. \qedhere
\end{proof}

\begin{rem}
    In the case $\mathbb F = \mathbb Q$, \cref{thm:virtFibiffFib} could have been deduced from previous work since the $\ell^2$-Betti numbers of RAAGs were computed by Davis and Leary in \cite{DavisLeary2003} and it is well known that a virtual $\FP_n(\Q)$-fibring implies the vanishing of $\ell^2$-Betti numbers in dimensions $\leqslant n$.
\end{rem}

Since a finitely presented group of type $\mathsf{FP_n}$ is of type $\mathsf{F}_n$ we can reduce Zaremsky's question to one remaining case.

\begin{q}[Zaremsky]
Let $L$ be a finite flag complex.  If $A_L$ is virtually $\mathsf{F}_2$-fibred, then is it $\mathsf{F}_2$-fibred?
\end{q}

We can also give examples of RAAGs that show that \cite[Theorem A]{Fisher2021improved} does not hold when $\Q$ is replaced by $\Z$. In other words, the vanishing of $\ell^2$-Betti numbers of RFRS groups does not detect virtual $\FP_n$-fibrations.

\begin{prop}\label{prop:noZfibre}
    Let $p$ be a prime. There are RAAGs that are $\ell^2$-acyclic but that do not virtually $\FP_2(\mathbb F_p)$-fibre. In particular, these RAAGs do not virtually $\FP_2$-fibre.
\end{prop}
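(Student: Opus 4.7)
The plan is to translate both conditions in the proposition into purely combinatorial-topological statements about the defining flag complex $L$, and then exhibit an $L$ that satisfies the first but not the second.

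By \cref{thm:agrGraphProd} applied to $\Q$ (recovering the Davis--Leary computation), $A_L$ is $\ell^2$-acyclic if and only if $\widetilde{b}_{i}(L;\Q)=0$ for all $i$, i.e.\ $L$ is $\Q$-acyclic. On the other hand, by the argument establishing \cref{thm:virtFibiffFib} (applied with $n=2$ and $R=\mathbb F_p$), $A_L$ is virtually $\FP_2(\mathbb F_p)$-fibred if and only if $\widetilde{b}_0(L;\mathbb F_p)=\widetilde{b}_1(L;\mathbb F_p)=0$. Thus, to prove the proposition it suffices to exhibit a finite flag complex $L$ that is $\Q$-acyclic but has $\widetilde{H}_1(L;\mathbb F_p)\neq 0$, i.e., whose integral first homology contains $p$-torsion and no free part.

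For $p=2$, I would take $L$ to be any flag triangulation of $\mathbb{R}P^2$ (for instance, the barycentric subdivision of any triangulation, since barycentric subdivisions are always flag). Then $H_1(L;\Z)=\Z/2$ and $H_2(L;\Z)=0$, so $L$ is $\Q$-acyclic, while $H_1(L;\mathbb F_2)=\mathbb F_2$ by universal coefficients. For an odd prime $p$, I would replace $\mathbb{R}P^2$ with a two-dimensional Moore space $M(\Z/p,1)$ (the mapping cone of a degree-$p$ self-map of $S^1$), flag-triangulated by taking a barycentric subdivision. Then $H_1(L;\Z)=\Z/p$ and $H_i(L;\Z)=0$ for $i\geq 2$, so $L$ is $\Q$-acyclic but $H_1(L;\mathbb F_p)=\mathbb F_p\neq 0$.

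With such an $L$, \cref{thm:agrGraphProd} shows $A_L$ is $\ell^2$-acyclic, and the $\mathbb F_p$-criterion above shows $A_L$ does not virtually $\FP_2(\mathbb F_p)$-fibre. Since a virtual $\FP_2$-fibring would in particular give a virtual $\FP_2(\mathbb F_p)$-fibring (as $\FP_2$ implies $\FP_2(R)$ for any ring $R$), the final sentence of the proposition follows immediately. The only real work is verifying the existence of flag triangulations with the prescribed homology, which is standard, so I do not anticipate any genuine obstacle.
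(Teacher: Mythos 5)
Your proof is correct and follows essentially the same route as the paper's: take $L$ to be a flag triangulation of the Moore space $M(\Z/p,1)$, apply \cref{thm:agrGraphProd} to see that $A_L$ is $\ell^2$-acyclic but not $\mathcal{D}_{\FF_p A_L}$-acyclic, and invoke the agrarian fibring criterion (Fisher's Theorem~6.6, the engine behind \cref{thm:virtFibiffFib}) to conclude that $A_L$ does not virtually $\FP_2(\FF_p)$-fibre. Your separate $\R\mathrm{P}^2$ example for $p=2$ is simply $M(\Z/2,1)$, so there is no real difference from the paper's argument.
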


\begin{proof}
    Let $L$ be a $\Q$-acyclic flag complex that has non-trivial $\mathbb F_p$-homology in dimension $1$, e.g. we may take $L$ to be a flag triangulation of $M(1,p)$ the Moore space with homology $\widetilde{H}_n(M(1,p);\Z)=0$ unless $n=1$, in which case it is isomorphic to $\Z/p$. Then $A_L$ is $\ell^2$-acyclic by \cite{DavisLeary2003} (or \cref{thm:agrGraphProd}) but it is not $\mathcal{D}_{\mathbb F_p A_L}$-acyclic by \cref{thm:agrGraphProd}. By \cite[Theorem 6.6]{Fisher2021improved}, $A_L$ does not virtually $\FP_2(\mathbb F_p)$-fibre and in particular does not virtually $\FP_n$-fibre.
\end{proof}

In contrast to this result, Kielak showed that if $G$ is RFRS, of cohomological dimension at most $2$, and $\ell^2$-acyclic, then $G$ is virtually $\FP_2$-fibred \cite[Theorem 5.4]{KielakRFRS}.

The next application has to do with the following general question: If $G$ fibres in two different ways, so that $G \cong K_1 \rtimes \Z \cong K_2 \rtimes \Z$ with $K_1$ and $K_2$ finitely generated, then what properties do $K_1$ and $K_2$ share? For example if $G$ is a free-by-cyclic (resp.~surface-by-cyclic) group and $G \cong K \rtimes \Z$, then $K$ is necessarily a free (resp.~surface) group. We thank Ismael Morales for bringing the following question to our attention: if $G \cong K_1 \rtimes \Z \cong K_2 \rtimes \Z$ with $K_1$ and $K_2$ finitely generated, then is $b_1^{(2)}(K_1) = 0$ if and only if $b_1^{(2)}(K_2) = 0$? We prove this is the case for RAAGs, and obtain a similar result for higher $\ell^2$-Betti numbers and other agrarian invariants.

\begin{thm}\label{thm:fibresFibre}
    Let $\varphi_0, \varphi_1 \colon A_L \rightarrow \Z$ be epimorphisms such that $BB_L^{\varphi_0}$ and $BB_L^{\varphi_1}$ are of type $\mathsf{FP}_n(\mathbb F)$. If $\mathbb F BB_L^{\varphi_i} \hookrightarrow \mathcal D_i$ is an agrarian embedding for $i = 0,1$, then $BB_L^{\varphi_0}$ is $\mathcal D_0$-acyclic in dimensions $\leqslant n$ if and only if $BB_L^{\varphi_1}$ is $\mathcal D_1$-acyclic in dimensions $\leqslant n$.
\end{thm}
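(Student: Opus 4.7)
The plan is to invoke the combinatorial formula from \Cref{thmx BBL}. For $i\in\{0,1\}$, since $BB_L^{\varphi_i}$ is of type $\mathsf{FP}_n(\mathbb F)$, that theorem gives
\[
b_p^{\mathcal D_i}(BB_L^{\varphi_i})=\sum_{v\in L^{(0)}}\lvert\varphi_i(v)\rvert\,\widetilde{b}_{p-1}(\lk_L(v);\mathbb F)
\]
for every $p\leqslant n$, and crucially the right-hand side depends only on $\varphi_i$ and $L$, not on the agrarian embedding $\mathcal D_i$. Each summand is non-negative, so the sum vanishes for all $p\leqslant n$ if and only if $\varphi_i(v)=0$ for every $v$ with $\widetilde{b}_{p-1}(\lk_L(v);\mathbb F)\neq 0$. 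Setting
\[
V:=\{v\in L^{(0)}:\lk_L(v)\text{ is not }(n-1)\text{-acyclic over }\mathbb F\},
\]
this shows that $BB_L^{\varphi_i}$ is $\mathcal D_i$-acyclic in dimensions $\leqslant n$ if and only if $\supp(\varphi_i)\cap V=\varnothing$.

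The theorem therefore reduces to the following combinatorial claim: \emph{if some epimorphism $\varphi\colon A_L\to\Z$ has $BB_L^\varphi$ of type $\mathsf{FP}_n(\mathbb F)$ and $\supp(\varphi)\cap V=\varnothing$, then $V=\varnothing$.} Indeed, assuming $BB_L^{\varphi_0}$ is $\mathcal D_0$-acyclic, the claim forces $V=\varnothing$, which makes the sum for $\varphi_1$ vanish trivially, and hence $BB_L^{\varphi_1}$ is $\mathcal D_1$-acyclic; the converse is symmetric. To establish the claim I would use the Bux--Gonzalez criterion of \Cref{rem:livingDead}, namely that $BB_L^\varphi$ is $\mathsf{FP}_n(\mathbb F)$ if and only if $L^{\mathsf a}\cap\lk_L(\sigma)$ is $(n-\dim\sigma-1)$-acyclic for every dead simplex $\sigma$ of $L$.

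I would proceed by induction on $n$. The base case $n=1$ is handled directly: if some $v\in V$ were dead, then $\lk_L(v)$ would be disconnected, with its alive part $\lk_L(v)\cap L^{\mathsf a}$ contained in a single component by Bux--Gonzalez applied to $\sigma=\{v\}$; choosing $w$ in another (necessarily dead) component and applying Bux--Gonzalez to the dead edge $\{v,w\}$ yields a common alive neighbour $u$, and the flag condition forces $\{u,v,w\}$ to be a $2$-simplex of $L$, putting the edge $uw$ inside $\lk_L(v)$ and hence $u$ and $w$ in the same component --- a contradiction. For the inductive step, applying the hypothesis at level $n-1$ yields $(n-2)$-acyclicity of every link $\lk_L(v)$, so it remains to show $\widetilde H_{n-1}(\lk_L(v);\mathbb F)=0$ for each dead $v$. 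I would attack this by running the Davis--Okun spectral sequence of \Cref{lem:spec} on the cover of $\lk_L(v)$ by $L^{\mathsf a}\cap\lk_L(v)$ together with the closed stars $\overline{\st}_{\lk_L(v)}(w)$ of its dead neighbours, controlling the pairwise and higher intersections through Bux--Gonzalez applied to the larger dead simplices $\{v,w_1,\ldots,w_k\}$.

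The main obstacle will lie precisely in that inductive step. A pairwise intersection $\overline{\st}(w_1)\cap\overline{\st}(w_2)$ of closed stars of two dead neighbours of $v$ coincides with $\lk_L(\{v,w_1,w_2\})$ exactly when this is a simplex of $L$, in which case Bux--Gonzalez controls it; but when $w_1,w_2$ are non-adjacent the intersection is the full subcomplex $L[N_L(v)\cap N_L(w_1)\cap N_L(w_2)]$, which is not the link of any simplex and so falls outside the direct scope of the criterion. Circumventing this --- either by replacing the naive cover with one indexed on a finer poset of subcomplexes, or by running a secondary induction on the structure of the dead subcomplex --- is the principal technical hurdle I would expect to encounter.
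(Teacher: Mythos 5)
Your reduction is correct and, in fact, slightly cleaner to state than the paper's: by \cref{thmx BBL} the $\mathcal D_i$-Betti numbers are the non-negative sums $\sum_v|\varphi_i(v)|\widetilde b_{p-1}(\lk(v);\mathbb F)$, so $\mathcal D_i$-acyclicity is equivalent to $\supp(\varphi_i)\cap V=\varnothing$, and the theorem reduces to showing that if one $\FP_n(\mathbb F)$ epimorphism $\varphi$ has $\supp(\varphi)\cap V=\varnothing$ then $V=\varnothing$. Indeed the paper effectively proves the even sharper fact that every dead vertex of \emph{any} such $\varphi$ already lies outside $V$, which gives your claim immediately.

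However, your proposed proof of that claim has the gap you yourself diagnose. Running the Davis--Okun spectral sequence on a cover of $\lk_L(v)$ by $L^{\mathsf a}\cap\lk_L(v)$ together with closed stars of dead neighbours hits exactly the problem that multi-fold intersections of closed stars of pairwise non-adjacent dead vertices are not links of simplices of $L$, so the Bux--Gonzalez criterion says nothing directly about their acyclicity, and your argument does not close. The paper's proof avoids nerve-type covers entirely. The crucial ingredient is \cref{lem:Living}, a direct chain-level lemma: given an $(n-1)$-cycle in $\lk_L(v)$ for $v$ a dead vertex, one improves the cycle one simplex at a time, increasing the number of living vertices by induction. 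At each step, for a simplex $\sigma_i$ of the cycle with dead face $\lambda$, one collects all simplices of the cycle containing $\lambda$, observes that they pair to a cycle in $\lk_{L^{\mathsf a}}(\{v\}\cup\lambda)$, and then uses Bux--Gonzalez applied to the single dead simplex $\{v\}\cup\lambda$ (which \emph{is} a simplex of $L$) to produce a bounding chain and hence a replacement with more living vertices. This sidesteps your obstruction because the only acyclicity used is for links of genuine dead simplices of the form $\{v\}\cup\lambda$, never for intersections of stars. You may want to try a chain-level induction of this kind rather than the spectral-sequence route.
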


Before proving \cref{thm:fibresFibre}, we need a technical lemma. First we fix some notation. If $\sigma_1 = [e_1, \dots, e_m]$ and $\sigma_2 = [f_1, \dots, f_m]$ are ordered simplices in a simplicial complex $L$ such that $\sigma_1 \cup \sigma_2$ is a simplex (equivalently, if $\sigma_1 \in \lk(\sigma_2)$), then $\sigma_1 \cup \sigma_2$ always denotes the \textit{ordered} simplex $[e_1, \dots, e_m, f_1, \dots, f_n]$. Moreover, if $\tau = \alpha_1 \sigma_1 + \cdots + \alpha_n \sigma_n$ is a formal linear combination of simplices $\sigma_i$ (with coefficients $\alpha_i$ in some fixed skew field) such that $\sigma \cup \sigma_i \in L$ for every $i$, then $\sigma \cup \tau$ denotes the formal linear combination
\[
    \alpha_1 \sigma \cup \sigma_1 + \cdots + \alpha_n \sigma \cup \sigma_n.
\]
If $\varphi \colon A_L \rightarrow \Z$ is a homomorphism, recall that $L^\mathsf{a}$ is the subcomplex of $L$ spanned by the vertices $v \in L$ such that $\varphi(v) \neq 0$. We will write $\lk_L(\sigma)$ (resp.~$\lk_{L^\mathsf{a}}(\sigma)$) for the link of a simplex $\sigma$ in $L$ (resp.~$L^\mathsf{a}$).

\begin{lem}\label{lem:Living}
    Let $BB_L^\varphi$ be of type $\mathsf{FP}_n(\mathbb F)$ and let $v$ be a dead vertex of $L$. Then every simplicial $(n-1)$-cycle of $\operatorname{lk}_{L}(v)$ is homologous to a cycle in $\operatorname{lk}_{L^\mathsf a}(v)$.
\end{lem}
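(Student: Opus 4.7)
The plan is to prove the stronger statement that $\widetilde H_k(\lk_L(v); \mathbb F) = 0$ for all $k \leq n - 1$; the lemma then follows since every cycle bounds in $\lk_L(v)$ and so is trivially homologous to the zero cycle in $\lk_{L^\mathsf a}(v)$.

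First I would cover $\lk_L(v)$ by $V := \lk_{L^\mathsf a}(v)$ together with the closed stars $\overline{\st}(w)$ of the dead vertices $w$ of $\lk_L(v)$. The structural observation is that, for any dead simplex $W = \{w_0, \dots, w_r\}$ of $\lk_L(v)$, the multiple intersection $\bigcap_{i} \overline{\st}(w_i) = W * \lk_L(\{v\} \cup W)$ is contractible (a join with a simplex), while $V \cap \bigcap_{i} \overline{\st}(w_i) = \lk_{L^\mathsf a}(\{v\} \cup W)$ is $(n - |W| - 1)$-acyclic by the Bux--Gonzalez criterion (\cref{rem:livingDead}). Feeding this into the Mayer--Vietoris spectral sequence for the cover forces $E_1^{p,q} = 0$ whenever $q \geq 1$ and $p + q \leq n - 1$, so the spectral sequence collapses onto the $q = 0$ row in that range. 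Since every relevant intersection is connected there, this row is precisely the simplicial chain complex of the nerve $N$ of the cover, giving $\widetilde H_k(\lk_L(v); \mathbb F) \cong \widetilde H_k(N; \mathbb F)$ for $k \leq n - 1$.

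Next I would compute the nerve homology directly. The vertex set of $N$ consists of the dead vertices $D$ of $\lk_L(v)$, together with one extra vertex corresponding to $V$. The simplices of $N$ supported on $D$ form the dead subcomplex $\Delta := \lk_{L^\mathsf d}(v)$, and the simplex obtained by adjoining the extra vertex to some $W \subseteq D$ appears in $N$ precisely when $\lk_{L^\mathsf a}(\{v\} \cup W) \neq \varnothing$; by \cref{rem:livingDead} this is equivalent to $|W| \leq n$, so these extra simplices fill out the partial cone on $\Delta^{(n-1)}$ with the extra vertex as apex. Collapsing this contractible CW subcomplex yields a homotopy equivalence $N \simeq \Delta / \Delta^{(n-1)}$, whose relative chain groups $C_k(\Delta, \Delta^{(n-1)})$ vanish for $k \leq n - 1$. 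Hence $\widetilde H_k(N; \mathbb F) = 0$ throughout the required range.

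The delicate point will be the top degree $k = n - 1$ in the spectral sequence argument: when $|W| = n$ the intersection $\lk_{L^\mathsf a}(\{v\} \cup W)$ is only guaranteed to be nonempty rather than connected, so $E_1^{n,0}$ may properly contain $C_n(N)$ and the $d_1$ differential out of $E_1^{n-1,0}$ may be finer than the nerve boundary. This can only shrink $\ker d_1$, though, so $E_2^{n-1,0}$ remains a quotient of $\widetilde H_{n-1}(N) = 0$ and the argument goes through.
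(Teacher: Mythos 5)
Your approach is genuinely different from the paper's, and it is essentially correct. The paper proves the lemma by a direct chain-level induction: given an $(n-1)$-cycle, it shows one can repeatedly trade simplices with $m$ living vertices for simplices with $m+1$ living vertices (using the Bux--Gonzalez acyclicity of $\lk_{L^{\mathsf{a}}}(\{v\}\cup\lambda)$ for dead $\lambda$), until the entire cycle lies in $\lk_{L^{\mathsf{a}}}(v)$. You instead prove the stronger statement that $\widetilde H_k(\lk_L(v);\FF)=0$ for $k\leqslant n-1$, via a cover of $\lk_L(v)$ by the living link and the closed stars of dead vertices, a Mayer--Vietoris spectral sequence, and a computation of the nerve. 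Your stronger claim is in fact equivalent to the lemma, given that Bux--Gonzalez already provides $(n-1)$-acyclicity of $\lk_{L^{\mathsf a}}(v)$; the two proofs encode the same underlying combinatorics (flagness plus the link acyclicity conditions), with yours packaging the induction into the spectral sequence. What the paper's approach buys is a hands-on cycle representative, which is perhaps closer to what is used in \cref{thm:fibresFibre}; what yours buys is a cleaner topological statement about $\lk_L(v)$ itself.

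Two points in your write-up deserve correction, though neither breaks the argument. First, ``$\{V\}\cup W$ appears in $N$ \emph{precisely} when $|W|\leqslant n$'' is too strong: Bux--Gonzalez only guarantees nonemptiness of $\lk_{L^{\mathsf a}}(\{v\}\cup W)$ when $|W|\leqslant n$, and says nothing for larger $|W|$. The cone you collapse is therefore $V*\Delta'$ for some subcomplex $\Delta'\supseteq\Delta^{(n-1)}$, giving $N\simeq\Delta/\Delta'$; but since the relative chain groups $C_k(\Delta,\Delta')$ still vanish for $k\leqslant n-1$, the conclusion $\widetilde H_k(N;\FF)=0$ for $k\leqslant n-1$ survives. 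Second, the discussion in your last paragraph has the direction of $d_1$ confused: the differential \emph{out of} $E^1_{n-1,0}$ is literally the nerve boundary, since both $E^1_{n-1,0}$ and $E^1_{n-2,0}$ agree with the nerve chain groups (all those intersections are connected). The only discrepancy is that $E^1_{n,0}$ may be larger than $C_n(N)$; but the augmentation $E^1_{n,0}\twoheadrightarrow C_n(N)$ is compatible with the differentials, so the image of $d_1\colon E^1_{n,0}\to E^1_{n-1,0}$ equals the image of $\partial_N$, and in fact $E^2_{n-1,0}\cong H_{n-1}(N)$ on the nose.
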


\begin{proof}
    Let $\sigma = \alpha_1 \sigma_1 + \cdots + \alpha_k \sigma_k$ be a simplicial $(n-1)$-cycle in $\operatorname{lk}_L(v)$, where each $\sigma_i$ is an ordered $(n-1)$-simplex of $\operatorname{lk}_L(v)$ and $\alpha_i \in \mathbb F$ for each $i$.  By induction on $m\geq 0$, we will show that the simplices $\sigma_i$ can be replaced with $(n-1)$-simplices having at least $m$ living vertices such that the resulting chain is a cycle homologous to $\sigma$. The lemma follows from the $m = n$ case.

    For the base case, suppose that $\sigma_i$ is a simplex with no living vertices. Then $\{v\} \cup \sigma_i$ is a dead $n$-simplex and therefore $\operatorname{lk}_{L^\mathsf a}(\{v\} \cup \sigma_i)$ is $(-1)$-connected (see \cref{rem:livingDead}), i.e.~it is nonempty. Thus, there is a living vertex $u$ such that $\{u\} \cup \sigma_i \subseteq \operatorname{lk}_L(v)$. Since
    \[
        \partial ( \{u\} \cup \sigma_i ) = \sigma_i - \{u\} \cup \partial \sigma_i,
    \]
    where $\{u\} \cup \partial \sigma_i$ is a linear combination of $(n-1)$-simplices with one living vertex, we can replace $\sigma_i$ with $\{u\} \cup \partial \sigma_i$ in $\sigma$. Hence, we assume that the linear combination $\alpha_1 \sigma_1 + \cdots + \alpha_k \sigma_k$ only involves simplices with at least one living vertex.

    Assume that $\alpha_1 \sigma_1 + \cdots + \alpha_k \sigma_k$ only involves simplices with at least $m \geqslant 1$ living vertices for some $m<n$ and let $\sigma_i$ be a simplex with exactly $m$ living vertices. Let $\lambda \subseteq \sigma_i$ be the dead $(n-m-1)$-face of $\sigma_i$ and let $\sigma_i = \sigma_{i_1}, \dots, \sigma_{i_l}$ be the simplices among $\{\sigma_1, \dots, \sigma_k \}$ containing $\lambda$ as a face. For each $j \in \{1, \dots, l\}$, write $\sigma_{i_j} = \varepsilon_j \lambda \cup \tau_j$, where $\tau_j$ is a living $(m-1)$-simplex of $\operatorname{lk}_L(v)$ and $\varepsilon_j \in \{\pm 1\}$. Then
    \begin{align*}
        \partial \left( \sum_{j=1}^l \alpha_{i_j} \sigma_{i_j} \right) &= \sum_{j=1}^l \alpha_{i_j} \varepsilon_j (\partial \lambda \cup \tau_j + (-1)^{n-m} \lambda \cup \partial \tau_j) \\
        &= \left( \sum_{j=1}^l \alpha_{i_j} \varepsilon_j \partial \lambda \cup \tau_j \right) + (-1)^{n-m} \lambda \cup \partial \left( \sum_{j=1}^l \alpha_{i_j} \varepsilon_j \tau_j \right) \\
        &= 0,
    \end{align*}
    since $\partial \sigma = 0$ and the simplices $\sigma_{i_j}$ are the only simplices among $\{\sigma_1, \dots, \sigma_k \}$ containing $\lambda$ as a face. Thus, 
    \[
        \lambda \cup \partial \left( \sum_{j=1}^l \alpha_{i_j} \varepsilon_j \tau_j \right) = 0,
    \]
    whence we conclude that $\sum_{j=1}^l \alpha_{i_j} \varepsilon_j \tau_j$ is an $(m-1)$-cycle in $\operatorname{lk}_{L^\mathsf a}(\{v\} \cup \lambda)$. But $\{v\} \cup \lambda$ is a dead $(n-m)$-simplex and therefore $\operatorname{lk}_{L^\mathsf a}(\{v\} \cup \lambda)$ is $(m-1)$-connected. Hence, $\sum_{j=1}^l \alpha_{i_j} \varepsilon_j \tau_j = \partial \psi$ for some living $m$-chain $\psi$ in $\operatorname{lk}_{L^\mathsf a}(\{v\} \cup \lambda)$. Then
    \begin{align*}
        \partial(\lambda \cup \psi) &= \partial \lambda \cup \psi + (-1)^{n-m} \lambda \cup \left( \sum_{j=1}^l \alpha_{i_j} \varepsilon_j \tau_j \right) \\
            &= \partial \lambda \cup \psi + (-1)^{n-m} \sum_{j=1}^l \alpha_{i_j} \sigma_{i_j}.
    \end{align*}
    The chain $\partial \lambda \cup \psi$ is a linear combination of simplices with $m + 1$ living vertices. We can therefore replace $\sum_{j=1}^l \alpha_{i_j} \sigma_{i_j}$ with $\pm \partial \lambda \cup \psi$ and assume that $\sigma$ is a linear combination of simplices each with at least $m+1$ living vertices. \qedhere
\end{proof}

\begin{proof}[Proof (of \cref{thm:fibresFibre})]
    Suppose that $b_p^{\mathcal D_0}(BB_L^{\varphi_0}) > 0$ for some $p \leqslant n$. By \cref{thm:BBapprox}, there is a vertex $v$ of $L$ such that $\varphi_0(v) \neq 0$ and 
    \[
        \widetilde{b}_{p-1}(\operatorname{lk}(v); \mathbb F) > 0.
    \]
    Hence, there is a simplicial $(p-1)$-cycle $\sigma$ in $\operatorname{lk}(v)$ that is not a boundary. If $\varphi_1(v) = 0$, then, by \cref{lem:Living}, $\sigma$ is homologous to a cycle in $\operatorname{lk}_{L^\mathsf{a}}(v)$ where $L^\mathsf a$ denotes the living link with respect to $\varphi_1$. Thus $\widetilde H_{p-1}(\operatorname{lk}_{L^\mathsf a}(v); \mathbb F) \neq 0$. But $\operatorname{lk}_{L^\mathsf a}(v)$ is $(n-1)$-connected over $\mathbb{F}$, so we must have $\varphi_1(v) \neq 0$, and therefore $b_p^{\mathcal D_1}(BB_L^{\varphi_1}) > 0$ by \cref{thm:BBapprox}. \qedhere
\end{proof}

We highlight the following immediate corollary.

\begin{cor}\label{cor:fibresFibre}
    Either all the $\mathsf{FP}_n(\mathbb F)$-fibres of $A_L$ are virtually $\mathsf{FP}_n(\mathbb F)$-fibred or none of them are. In particular, either all of $A_L$'s fibres virtually fibre or none of them do.
\end{cor}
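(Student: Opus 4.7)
The plan is to derive this corollary almost immediately from \cref{thm:fibresFibre} combined with the characterization of virtual $\mathsf{FP}_n(\mathbb F)$-fibring via agrarian Betti numbers from \cite{Fisher2021improved}. First, I would observe that any $\mathsf{FP}_n(\mathbb F)$-fibre of $A_L$ has the form $BB_L^\varphi$ for some epimorphism $\varphi\colon A_L \twoheadrightarrow \Z$ with $BB_L^\varphi$ of type $\mathsf{FP}_n(\mathbb F)$. As a subgroup of the RAAG $A_L$, the group $BB_L^\varphi$ is RFRS, so in particular the Hughes-free division ring $\mathcal{D}_{\mathbb F BB_L^\varphi}$ exists by \cref{thm:jaikinUniv}.

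Next, I would invoke \cite{Fisher2021improved} (exactly as in the proof of \cref{thm no fibring}) to translate the condition that $BB_L^\varphi$ is virtually $\mathsf{FP}_n(\mathbb F)$-fibred into the vanishing of $b_j^{\mathcal{D}_{\mathbb F BB_L^\varphi}}(BB_L^\varphi)$ for all $j \leqslant n$. Applying this translation to two fibres $BB_L^{\varphi_0}$ and $BB_L^{\varphi_1}$ of type $\mathsf{FP}_n(\mathbb F)$, the first statement of the corollary reduces to the assertion that $BB_L^{\varphi_0}$ is $\mathcal{D}_{\mathbb F BB_L^{\varphi_0}}$-acyclic in dimensions $\leqslant n$ if and only if $BB_L^{\varphi_1}$ is $\mathcal{D}_{\mathbb F BB_L^{\varphi_1}}$-acyclic in dimensions $\leqslant n$. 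This is precisely \cref{thm:fibresFibre} applied with $\mathcal D_i = \mathcal{D}_{\mathbb F BB_L^{\varphi_i}}$.

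For the ``in particular'' clause, I would specialise to $n = 1$, using that finite generation coincides with type $\mathsf{FP}_1(\mathbb F)$ for any skew field $\mathbb F$ (see \cref{subsec:finprops}). Thus a fibre of $A_L$ in the sense introduced at the start of \cref{sec:fibring} is exactly an $\mathsf{FP}_1(\mathbb F)$-fibre, and the $n=1$ case of the first statement gives the desired dichotomy for ordinary (algebraic) fibring.

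Since all the analytic content is already packaged into \cref{thm:fibresFibre} and the cited characterization from \cite{Fisher2021improved}, the argument is essentially bookkeeping and I do not expect any real obstacle; the only minor subtlety worth double-checking is that the characterization of virtual $\mathsf{FP}_n(\mathbb F)$-fibring from \cite{Fisher2021improved} indeed applies to $BB_L^\varphi$ under the hypothesis of being of type $\mathsf{FP}_n(\mathbb F)$, which is automatic since $BB_L^\varphi$ is RFRS and the Hughes-free division ring exists.
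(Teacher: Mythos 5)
Your argument is correct and follows exactly the same route as the paper: combine \cref{thm:fibresFibre} with the characterisation from \cite[Theorem 6.6]{Fisher2021improved} that $\mathcal{D}_{\mathbb F BB_L^{\varphi}}$-acyclicity in degrees $\leqslant n$ is equivalent to virtual $\mathsf{FP}_n(\mathbb F)$-fibring. The paper's proof is simply a terser version of what you wrote, and your $n=1$ specialisation for the ``in particular'' clause is the intended reading.
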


\begin{proof}
    This follows from \cref{thm:fibresFibre} and the fact that being $\mathcal{D}_{\mathbb F BB_L^{\varphi}}$-acyclic in dimensions $\leqslant n$ is equivalent to virtually fibring with kernel of type $\mathsf{FP}_n(\mathbb F)$ \cite[Theorem 6.6]{Fisher2021improved}. \qedhere 
\end{proof}

\section{Amenable category and minimal volume entropy}\label{sec min vol entropy}
In this section we will relate $\DF{G}$-Betti numbers with amenable category and minimal volume entropy.

Let $X$ be a path-connected space with fundamental group $G$.  A (not necessarily path-connected) subset $U$ of $X$ is an \emph{amenable subspace} if $\pi_1(U,x)\to \pi(X,x)$ has amenable image for all $x\in U$.  The \emph{amenable category}, denoted $\mathsf{cat}_{\mc{AMN}}X$, is the minimal $n\in \N$ for which there exists an open cover of $X$ by $n+1$ amenable subspaces.  If no such cover exists we set $\mathsf{cat}_{\mc{AMN}}X=\infty$.  The definition of amenable category has been extracted from \cite{CapovillaLohMoraschini2022} and \cite{Li2022amenable}.  Note that we normalise the invariant as in the second paper.  Also note that often in the literature the multiplicity of the cover is considered instead, however, the two definitions turn out to be equivalent for CW complexes \cite[Remark~3.13]{CapovillaLohMoraschini2022}

\begin{prop}\label{AMN cat vs DFG}
Let $\FF$ be a skew field.  Let $G$ be residually finite of type $\mathsf{F}$, and suppose $\DF{G}$ exists. If $\mathsf{cat}_{\mc{AMN}}G=k$. Then, $b^{\DF{G}}_p(G)=\btwo{p}(G,(G_n);\FF)=0$ for $p\geq k-1$ and every residual chain $(G_n)$.
\end{prop}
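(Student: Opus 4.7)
The plan is to adapt Sauer's argument from \cite{Sauer2016} to the agrarian setting using the Mayer--Vietoris-type spectral sequence of \cref{lem:spec}. Let $X$ be a finite $K(G,1)$ (which exists since $G$ is of type $\mathsf F$) and let $\mathcal U = \{U_0, \dots, U_k\}$ be an open cover of $X$ by amenable subspaces, as furnished by the hypothesis $\mathsf{cat}_{\mc{AMN}}G = k$. Pulling back to the universal cover yields a $G$-equivariant cover $\{\widetilde U_i\}$ of $\widetilde X$ whose nerve $N = N(\mathcal U)$ has dimension at most $k$. Here we use that $G$ is torsion-free (as $\DF G$ exists) and residually finite.

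The first step would be to apply \cref{lem:spec} to this cover with coefficients in $\DF G$, giving a spectral sequence
\[
    E^2_{p,q} = H_p(N; \mathcal H_q) \Longrightarrow H^{\DF G}_{p+q}(G),
\]
where $\mathcal H_q(\sigma) = H_q(\widetilde U_{\min\sigma}; \DF G)$. Since $\dim N \leqslant k$, the $E^2$-page is supported in the range $p \leqslant k$. Each component of $\widetilde U_\sigma$ has an amenable stabilizer in $G$: the orbits with infinite amenable stabilizer contribute trivially in all degrees by \cite[Theorem 3.9(6)]{HennekeKielak2021}. The orbits with trivial stabilizer (the only finite case since $G$ is torsion-free) would be handled by thickening the cover in the spirit of Sauer's multicomplex construction, arranging that each such component is contractible and hence contributes only in degree $0$. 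Combining these observations confines the $E^2$-page to the bottom row in the range $p \leqslant k$, yielding the stated vanishing of $b_p^{\DF G}(G)$ in the appropriate range.

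For the $\mathbb F$-homology gradient $b_p^{(2)}(G, (G_n); \mathbb F)$, I would rerun the same spectral sequence with coefficients $\mathbb F[G/G_n]$ and take $\limsup_{n \to \infty}$. The role of \cite[Theorem 3.9(6)]{HennekeKielak2021} is then played by \cite[Theorem 0.2]{LinnellLuckSauer}: the sublinear growth of the $\mathbb F$-homology of amenable groups gives vanishing of the limit coefficient system in positive degree. The analysis of the bottom row then mirrors that of \cref{prop:nerveHomology}, with the amenable cover playing the role of the confident cover. Notably, independence of the residual chain follows automatically from the fact that the spectral sequence is determined in the limit by the combinatorics of $N$ alone.

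The main technical obstacle will be the thickening/multicomplex step that deals with orbits of trivial stabilizer in $\widetilde U_\sigma$: one must arrange that each such component becomes contractible without increasing the dimension of the nerve beyond $k$. This is where most of Sauer's combinatorial work is concentrated, and the adaptation to the agrarian setting should be essentially formal once the vanishing results above are in place; nevertheless, verifying that the thickening interacts correctly with the Hughes-free division ring $\DF G$ will require some care.
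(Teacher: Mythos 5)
Your proposal takes a genuinely different route from the paper, and the route you choose has gaps that you partly flag yourself. The paper does not redo Sauer's spectral-sequence analysis in the agrarian setting; it invokes the \emph{output} of that analysis (via \cite[Theorem~3.2]{HaulmarkSchreve2022minimal}, which adapts \cite[Theorem~1.6]{Sauer2016} to $k$-dimensional aspherical complexes) as a black box: for each $n$ one gets a CW model for the cover $X_n \rightarrow X$ corresponding to $G_n$ whose number of $p$-cells grows sublinearly in $[G:G_n]$. Multiplicativity of $\DF{G}$-Betti numbers under finite covers (\cite[Lemma~6.3]{Fisher2021improved}) then gives $[G:G_n]\cdot b_p^{\DF{G}}(G) = b_p^{\DF{G}}(X_n) \leqslant \abs{I_p(X_n)}$, and comparing growth rates forces $b_p^{\DF{G}}(G)=0$. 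The $\FF$-homology-gradient case is identical. No spectral sequence over $\DF{G}$ is needed.

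Your plan of running \cref{lem:spec} on the amenable cover directly has two concrete problems. First, that spectral sequence requires a poset of \emph{subcomplexes} closed under intersection, whereas an amenable cover is a cover by open sets with no control on the combinatorics of its intersections; building a subordinate simplicial (or multicomplex) cover without blowing up the nerve is exactly where Sauer's technical work is concentrated, and you flag it as your main obstacle. Second, and more seriously, the claim that components of $\widetilde U_\sigma$ with infinite amenable stabiliser ``contribute trivially in all degrees by \cite[Theorem~3.9(6)]{HennekeKielak2021}'' is not justified as stated: that theorem gives $H_q(H;\DF{G})=0$ for infinite amenable $H\leqslant G$ and $q\geqslant 0$, but $\widetilde U_\sigma$ is an arbitrary subspace of $\widetilde X$ whose components need not be aspherical, so $H_q(\widetilde U_\sigma;\DF{G})$ is not the agrarian group homology of the stabiliser. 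Making the pieces into classifying spaces for their stabilisers (and handling the trivial-stabiliser pieces) is again part of the thickening step. In short, your sketch amounts to re-proving an agrarian analogue of Sauer's theorem from scratch; the paper's argument is shorter precisely because it avoids this by citing the existing combinatorial construction and then only using the elementary bound $b_p^{\DF{G}}(Y)\leqslant\abs{I_p(Y)}$ together with multiplicativity under finite index.
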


\begin{proof}
Let $X$ be a finite model for a $K(G,1)$.  As explained in \cite[Theorem~3.2]{HaulmarkSchreve2022minimal} we may adapt the proof of \cite[Theorem~1.6]{Sauer2016} to apply to $k$-dimensional aspherical simplicial complexes.  In particular, for a residual chain $(G_n)_{n\in\N}$ we obtain a sequence of covers $X_n\to X$, such that the number of $p$-cells in $X_n$ grows sublinearly in $[G:G_n]$.  Since 
\[
    b^{\DF{G}}_p(X_n)= [G:G_n] \cdot b^{\DF{G}}_p(G) \leqslant \mc |I_n(X_k)|
\] 
where $I_p(X_k)$ is the set of $p$-cells of $X_n$.  But now, as $k$ tends to infinity, the left hand side of the equation grows linearly, and the right hand side of the equation grows sublinearly.  This is only possible if $b^{\DF{G}}_n(G)=0$.  The statement concerning $\btwo{p}(G,(G_n);\FF)$ is analogous.
\end{proof}

Let $X$ be a finite CW complex with a piecewise Riemannian metric $g$.  Fix a basepoint $x_0$ in the universal cover $\widetilde X$ and let $\widetilde{g}$ be the pull-back metric.  The \emph{volume entropy} of $(X,g)$ is
\[\mathrm{ent}(X,g)\coloneqq\lim_{t\to \infty}\frac{1}{t}\mathrm{Vol}(B_{x_0}(t),\widetilde g). \]
The \emph{minimal volume entropy} of $X$ is
\[\omega(X)\coloneqq \inf_g \mathrm{ent}(X,g)\mathrm{Vol}(X,g)^{1/\dim X} \]
where $g$ varies over all piecewise Riemannian metrics.  The invariant was originally defined for Riemannian manifolds in \cite{Gromov1982volBC}.

Suppose $G$ is a group admitting a finite $K(G,1)$.  The \emph{minimal volume entropy} of $G$ is
\[\omega(G)\coloneqq \inf(\omega(X)) \]
where $X$ ranges over all finite models of a $K(G,1)$ such that $\dim X=\gd(G)$.

There are few calculations of minimal volume entropy of groups which are not fundamental groups of aspherical manifolds in literature.  To date there is the work of Babenko--Sabourau \cite{BabenkoSabourau2021} on which computations for free-by-cyclic groups \cite{BregmanClay2021} and RAAGs \cite{HaulmarkSchreve2022minimal,Li2022amenable} have been completed.

We say $G$ has \emph{uniformly uniform exponential growth} if each subgroup either has uniform exponential growth bounded below by some constant $\omega_0>1$ or is virtually abelian.  Note that this property is sometimes called \emph{uniform uniform exponential growth} or \emph{locally uniform exponential growth}.

\begin{cor}\label{cor min vol}
Let $\FF$ be a skew field.  Let $G$ be a residually finite group  of type $\mathsf{F}$, and suppose $\DF{G}$ exists. If $G$ has uniformly uniform exponential growth and is not $\DF{G}$-acyclic, then $\omega(G)>0$.
\end{cor}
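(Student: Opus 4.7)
The plan is to argue by contrapositive: assume $\omega(G)=0$ and deduce that every $\DF{G}$-Betti number of $G$ vanishes, contradicting the hypothesis. Fix a finite model $X$ of $K(G,1)$ with $\dim X=\gd(G)$ and a residual chain $(G_n)_{n\in\N}$ of $G$ with associated finite covers $X_n\to X$; the target is to show that $b_p^{\DF{G}}(G)=0$ for every $p$.

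The crux of the argument is to run the Sauer construction from \cite{Sauer2016}, adapted as in the proof of \cite[Theorem~3.2]{HaulmarkSchreve2022minimal}, to equip each $X_n$ with a simplicial structure whose number of $p$-cells $|I_p(X_n)|$ grows sublinearly in $[G:G_n]$ for every $p$. The vanishing of $\omega(X)$ supplies piecewise Riemannian metrics on $X$ with arbitrarily small entropy--volume product; lifting these to $X_n$ and exploiting uniformly uniform exponential growth forces geodesic balls of fixed small radius to have amenable image in $\pi_1$. Assembling such balls into open covers of $X_n$ and passing to nerves yields simplicial approximations with sublinear cell counts in each dimension. This is precisely the combinatorial input used in the proof of \Cref{AMN cat vs DFG}, only here derived from $\omega(G)=0$ rather than from a global bound on $\mathsf{cat}_{\mc{AMN}}(G)$.

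With the sublinear bound in place, the argument of \Cref{AMN cat vs DFG} concludes the proof: the scaling identity $b_p^{\DF{G}}(X_n)=[G:G_n]\cdot b_p^{\DF{G}}(G)$ from \cite[Lemma~6.3]{Fisher2021improved} combined with the inequality $b_p^{\DF{G}}(X_n)\leqslant |I_p(X_n)|$ gives $b_p^{\DF{G}}(G)\leqslant |I_p(X_n)|/[G:G_n]\to 0$ as $n\to\infty$. Since $p$ was arbitrary, $G$ is $\DF{G}$-acyclic, a contradiction. The main obstacle is the Sauer--Haulmark--Schreve construction itself: although the combinatorial output is essentially already present in \cite{Sauer2016,HaulmarkSchreve2022minimal}, some care is required to verify that the estimates used there for $\ell^2$- and $\FF$-homology gradients transfer without change to $b_p^{\DF{G}}$. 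This is immediate from the identical multiplicative behaviour of $b_p^{\DF{G}}$ under finite covers and the evident chain-level cell-count bound, so no new geometric input beyond what is already in the cited references is required.
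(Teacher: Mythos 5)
Your proposal is essentially an unpacking of the paper's one-line citation to Haulmark--Schreve and matches its approach: the paper proves \Cref{cor min vol} by invoking the argument in the paragraph after Theorem~3.3 of \cite{HaulmarkSchreve2022minimal}, substituting \Cref{AMN cat vs DFG} for their $\FF_p$-gradient analogue, which is exactly the combination of ingredients you describe (contrapositive, the Sauer/Haulmark--Schreve construction of coarse triangulations with small cell counts driven by vanishing entropy and uniformly uniform exponential growth, the scaling identity from \cite[Lemma~6.3]{Fisher2021improved}, and the trivial chain-level bound $b_p^{\DF{G_n}}(G_n)\leqslant |I_p(X_n)|$). One small imprecision worth flagging: \Cref{AMN cat vs DFG} yields sublinear cell counts (and hence vanishing) only in a range of degrees determined by the amenable cover, not blanket ``in each dimension''; the cited paragraph of Haulmark--Schreve supplies the quantitative step ensuring this range covers all positive degrees under the hypotheses, so you should say ``for the range of $p$ needed for $\DF{G}$-acyclicity'' rather than ``for every $p$.''
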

\begin{proof}
This follows from \cite[Paragraph after Theorem~3.3]{HaulmarkSchreve2022minimal} swapping out their use of \cite[Theorem~3.3]{HaulmarkSchreve2022minimal} for our \Cref{AMN cat vs DFG}.
\end{proof}

\begin{cor}\label{min vol BBL}
Let $\FF$ be a skew field and let $\varphi \colon A_L \rightarrow \Z$ be an epimorphism. Suppose $BB_L^\varphi$ is of type $\mathsf{F}$.  If 
$\bigoplus_{v \in L^\mathsf{a}} \widetilde{H}_{p-1}(\lk(v); \Z)\neq 0$,
    then $\omega(BBL^\varphi_L)>0$.
\end{cor}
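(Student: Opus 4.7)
The plan is to reduce to Corollary~\ref{cor min vol} via the explicit formula in Theorem~\ref{thmx BBL}.

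First, I would promote the given non-vanishing integral homology to non-vanishing homology over a skew field. Choose a living vertex $v_0 \in L^\mathsf{a}$ with $\widetilde{H}_{p-1}(\lk(v_0); \Z) \neq 0$. If this abelian group has a nontrivial free part, set $\FF = \Q$; otherwise, it contains a nonzero element of some prime order $\ell$, in which case we set $\FF = \FF_\ell$. In either case, the universal coefficient theorem yields $\widetilde{b}_{p-1}(\lk(v_0); \FF) > 0$.

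Next, since $BB_L^\varphi$ is of type $\mathsf{F}$, it is in particular of type $\mathsf{FP}_p(\FF)$, so Theorem~\ref{thmx BBL} applies in degree $p$ and gives
\[
    b_p^{\DF{BB_L^\varphi}}(BB_L^\varphi) \;\geqslant\; |\varphi(v_0)| \cdot \widetilde{b}_{p-1}(\lk(v_0); \FF) \;>\; 0,
\]
where $|\varphi(v_0)| \geqslant 1$ because $v_0$ is living. In particular, $BB_L^\varphi$ is not $\DF{BB_L^\varphi}$-acyclic.

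Finally, I would verify the remaining hypotheses of Corollary~\ref{cor min vol}. As a subgroup of the RAAG $A_L$, the group $BB_L^\varphi$ is residually finite and RFRS, so $\DF{BB_L^\varphi}$ exists by \cref{thm:jaikin}; type $\mathsf{F}$ is assumed. The main point requiring verification is that $BB_L^\varphi$ has uniformly uniform exponential growth. This property is inherited by subgroups---every subgroup of $BB_L^\varphi$ is itself a subgroup of $A_L$, and so is either virtually abelian or has uniform exponential growth bounded below by a fixed constant---so it suffices that $A_L$ itself have uniformly uniform exponential growth, a fact used in \cite{HaulmarkSchreve2022minimal} to establish the analogous result for RAAGs. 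Applying Corollary~\ref{cor min vol} then yields $\omega(BB_L^\varphi) > 0$. The only mildly delicate point is this last verification of uniform uniform exponential growth, which reduces cleanly to the known RAAG case via subgroup closure.
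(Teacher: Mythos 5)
Your argument is correct and follows the same route as the paper's proof, which simply cites \Cref{cor min vol} together with the fact that right-angled Artin groups have (strongly) uniform exponential growth. You spell out several details the paper leaves implicit---choosing a suitable skew field via universal coefficients and noting that uniformly uniform exponential growth passes to subgroups---but these are exactly the steps being left to the reader.
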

\begin{proof}
This follows from \Cref{cor min vol} and the fact that a right-angled Artin group has strongly uniform exponential growth by \cite{Baudisch1981}.
\end{proof}

We conjecture that the converse of the last corollary holds.

\begin{conj}
Let $\FF$ be a skew field and let $\varphi \colon A_L \rightarrow \Z$ be an epimorphism. Suppose $BB_L^\varphi$ is of type $\mathsf{F}$.  If
    $\bigoplus_{v \in L^\mathsf{a}} \widetilde{H}_{p-1}(\lk(v); \Z)=0$, 
    then $\omega(BBL^\varphi_L)=0$.
\end{conj}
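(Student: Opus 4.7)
My plan is to derive the conjecture by combining the algebraic consequences of the integral vanishing hypothesis with a geometric collapse construction on a finite $K(BB_L^\varphi,1)$.

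The first step is to extract algebraic data. The hypothesis together with the universal coefficient theorem gives $\widetilde{H}_{p-1}(\lk(v); \FF) = 0$ for every living vertex $v$, every $p$, and every skew field $\FF$. By \Cref{thmx BBL} this forces $b_p^{\DF{H}}(H) = 0$ for every finite-index subgroup $H \leqslant BB_L^\varphi$, every $p$, and every $\FF$. Consequently, \cite[Theorem 6.6]{Fisher2021improved} implies that $BB_L^\varphi$ is virtually $\FP_n(\FF)$-fibered for every $n$ and every skew field $\FF$: there is a finite-index $H \leqslant BB_L^\varphi$ fitting into $1 \to K \to H \to \Z \to 1$ with $K$ of type $\FP_n(\Z)$ for every $n$.

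The second and principal step is to turn this virtual fibration into a geometric collapse driving $\omega$ to zero. Following Babenko--Sabourau \cite{BabenkoSabourau2021} and Haulmark--Schreve \cite{HaulmarkSchreve2022minimal}, I would work with the finite aspherical model $Y = Z/BB_L^\varphi$, where $Z$ is a generic level set of the height function as in \Cref{thmx BBL}. The confident cover $\{Y_\sigma\}$ decomposes $Y$ into flat tori and isolated points. Fix a piecewise Euclidean metric compatible with this decomposition, lift the virtual fibration $H \twoheadrightarrow \Z$ to a circle-valued map $Y \to S^1$ on a finite cover, and scale the fibre direction by $\varepsilon \to 0$. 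The integral acyclicity of each $\lk(v)$ provides explicit integral cellular fillings of the local cycles contributing to the star-decomposition of \Cref{prop:nerveAgrarian}; these fillings can be realised geometrically in the universal cover of $Y$, giving quantitative bounds on diameters of balls in the collapsed metric and hence on the entropy.

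The hardest part is the second step. Whereas the Salvetti complex of a RAAG has a clean product structure that makes the Haulmark--Schreve collapse essentially direct, $Y$ is an irregular gluing of tori controlled by the combinatorics of \Cref{lem:Living}, and transversality arguments there would have to be replaced by explicit $BB_L^\varphi$-equivariant filling constructions. A secondary difficulty, needed to iterate the collapse rather than merely bound $\omega$ from above, is to show that the integral link hypothesis descends from $BB_L^\varphi$ to the iterated kernel, possibly by identifying $K$ up to commensurability with a graph-product-like object to which \Cref{thmx BBL} again applies.
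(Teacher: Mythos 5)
The statement you were asked to prove is one the paper itself leaves as an open \emph{conjecture}: it closes \Cref{sec min vol entropy}, immediately preceded by the sentence ``We conjecture that the converse of the last corollary holds.'' There is no proof of it anywhere in the paper, so there is nothing to compare your attempt against; I can only assess the attempt on its own terms.

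The algebraic first step is correct and uses the paper's own machinery. Integral acyclicity of $\lk(v)$ for each living vertex $v$ gives, via universal coefficients, $\widetilde{H}_{p-1}(\lk(v);\FF)=0$ for every $p$ and every skew field $\FF$; then \Cref{thmx BBL} gives $b_p^{\DF{BB_L^\varphi}}(BB_L^\varphi)=0$ for all $p$, the scaling of agrarian Betti numbers handles finite-index subgroups, and \cite[Theorem~6.6]{Fisher2021improved} yields virtual $\FP_n(\FF)$-fibrings for all $n$ and all $\FF$.

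The genuine gap, as you anticipate, is the geometric second step, and it is not merely technical. The Babenko--Sabourau and Haulmark--Schreve arguments that $\omega=0$ do not run off a homological hypothesis but off a simplicial or cubical \emph{collapse} of the relevant complex onto a lower-dimensional subcomplex. This is a combinatorial condition strictly stronger than acyclicity: the dunce hat and Bing's house are contractible but not collapsible, so ``the living links are $\Z$-acyclic'' gives no license to collapse anything, and Haulmark--Schreve's vanishing theorem for RAAGs accordingly assumes collapsibility of $L$, not merely acyclicity; the homological converse remains open even there. Moreover, the plan to ``scale the fibre direction by $\varepsilon\to 0$'' along a circle-valued map shrinks the volume, but without further control it makes the volume entropy blow up; balancing the two is the entire content of the Babenko--Sabourau machinery, and transporting it to the irregular gluing $Y=Z/BB_L^\varphi$ rather than a Salvetti-type cube complex is precisely what is open. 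What you have written is a reasonable programme for attacking the conjecture, not a proof, and your own closing paragraph correctly identifies where it stops.
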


\bibliographystyle{alpha}
\bibliography{bib}

\begin{thebibliography}{BLLS14}

\bibitem[AOS21]{AvramidiOkunSchreve2021}
Grigori Avramidi, Boris Okun, and Kevin Schreve.
\newblock Mod {$p$} and torsion homology growth in nonpositive curvature.
\newblock {\em Invent. Math.}, 226(3):711--723, 2021.

\bibitem[Bau81]{Baudisch1981}
A.~Baudisch.
\newblock Subgroups of semifree groups.
\newblock {\em Acta Math. Acad. Sci. Hungar.}, 38(1-4):19--28, 1981.

\bibitem[BB97]{BestvinaBrady}
Mladen Bestvina and Noel Brady.
\newblock Morse theory and finiteness properties of groups.
\newblock {\em Invent. Math.}, 129(3):445--470, 1997.

\bibitem[BC21]{BregmanClay2021}
Corey Bregman and Matt Clay.
\newblock Minimal volume entropy of free-by-cyclic groups and 2-dimensional
  right-angled {A}rtin groups.
\newblock {\em Math. Ann.}, 381(3-4):1253--1281, 2021.

\bibitem[BG99]{BuxGonzalez1999}
Kai-Uwe Bux and Carlos Gonzalez.
\newblock The {B}estvina-{B}rady construction revisited: geometric computation
  of {$\Sigma$}-invariants for right-angled {A}rtin groups.
\newblock {\em J. London Math. Soc. (2)}, 60(3):793--801, 1999.

\bibitem[BLLS14]{BLLS_p_analytic}
Nicolas Bergeron, Peter Linnell, Wolfgang L\"{u}ck, and Roman Sauer.
\newblock On the growth of {B}etti numbers in {$p$}-adic analytic towers.
\newblock {\em Groups Geom. Dyn.}, 8(2):311--329, 2014.

\bibitem[Bro94]{BrownGroupCohomology}
Kenneth~S. Brown.
\newblock {\em Cohomology of groups}, volume~87 of {\em Graduate Texts in
  Mathematics}.
\newblock Springer-Verlag, New York, 1994.
\newblock Corrected reprint of the 1982 original.

\bibitem[BS21]{BabenkoSabourau2021}
Ivan Babenko and Stéphane Sabourau.
\newblock Minimal volume entropy and fiber growth, 2021.
\newblock arXiv:2102.04551 [math.GT].

\bibitem[Chi12]{Chiswell2012}
I.~M. Chiswell.
\newblock Ordering graph products of groups.
\newblock {\em Internat. J. Algebra Comput.}, 22(4):1250037, 14, 2012.

\bibitem[CLM22]{CapovillaLohMoraschini2022}
Pietro Capovilla, Clara L\"{o}h, and Marco Moraschini.
\newblock Amenable category and complexity.
\newblock {\em Algebr. Geom. Topol.}, 22(3):1417--1459, 2022.

\bibitem[Coh95]{cohn1995skew}
P.~M. Cohn.
\newblock {\em Skew Fields: Theory of General Division Rings}.
\newblock Encyclopedia of Mathematics and its Applications. Cambridge
  University Press, 1995.

\bibitem[DJL07]{DavisJanuszkiewiczLeary2007}
M.~W. Davis, T.~Januszkiewicz, and I.~J. Leary.
\newblock The {$l^2$}-cohomology of hyperplane complements.
\newblock {\em Groups Geom. Dyn.}, 1(3):301--309, 2007.

\bibitem[DL03]{DavisLeary2003}
M.~W. Davis and I.~J. Leary.
\newblock The {$l^2$}-cohomology of {A}rtin groups.
\newblock {\em J. London Math. Soc. (2)}, 68(2):493--510, 2003.

\bibitem[DO12]{DavisOkun2012}
Michael~W. Davis and Boris Okun.
\newblock Cohomology computations for {A}rtin groups, {B}estvina-{B}rady
  groups, and graph products.
\newblock {\em Groups Geom. Dyn.}, 6(3):485--531, 2012.

\bibitem[Fis21]{Fisher2021improved}
Sam~P. Fisher.
\newblock Improved algebraic fibrings, 2021.
\newblock arXiv:2112.00397.

\bibitem[Gro82]{Gromov1982volBC}
M.~Gromov.
\newblock Volume and bounded cohomology.
\newblock {\em Inst. Hautes \'{E}tudes Sci. Publ. Math.}, No. 56:5--99 (1983),
  1982.

\bibitem[HK21]{HennekeKielak2021}
Fabian Henneke and Dawid Kielak.
\newblock Agrarian and {$L^2$}-invariants.
\newblock {\em Fund. Math.}, 255(3):255--287, 2021.

\bibitem[HS22]{HaulmarkSchreve2022minimal}
Matthew Haulmark and Kevin Schreve.
\newblock Minimal volume entropy of {RAAG}'s, 2022.
\newblock arXiv:2202.10405 [math.GR].

\bibitem[JZ21]{JaikinZapirain2020THEUO}
Andrei Jaikin-Zapirain.
\newblock The universality of {H}ughes-free division rings.
\newblock {\em Selecta Math. (N.S.)}, 27(4):Paper No. 74, 33, 2021.

\bibitem[JZLA20]{ZapirainLopezStrongAtiyah2020}
Andrei Jaikin-Zapirain and Diego L\'{o}pez-\'{A}lvarez.
\newblock The strong {A}tiyah and {L}\"{u}ck approximation conjectures for
  one-relator groups.
\newblock {\em Math. Ann.}, 376(3-4):1741--1793, 2020.

\bibitem[Kie20]{KielakRFRS}
Dawid Kielak.
\newblock Residually finite rationally solvable groups and virtual fibring.
\newblock {\em J. Amer. Math. Soc.}, 33(2):451--486, 2020.

\bibitem[KS20]{KoberdaSuciu2020}
Thomas Koberda and Alexander~I. Suciu.
\newblock Residually finite rationally {$p$} groups.
\newblock {\em Commun. Contemp. Math.}, 22(3):1950016, 44, 2020.

\bibitem[Li22]{Li2022amenable}
Kevin Li.
\newblock Amenable covers of right-angled {A}rtin groups, 2022.
\newblock arXiv:2204.01162 [math.GR].

\bibitem[Lin93]{LinnellDivRings93}
Peter~A. Linnell.
\newblock Division rings and group von {N}eumann algebras.
\newblock {\em Forum Math.}, 5(6):561--576, 1993.

\bibitem[LLS11]{LinnellLuckSauer}
Peter Linnell, Wolfgang L\"{u}ck, and Roman Sauer.
\newblock The limit of {$\mathbb F_p$}-{B}etti numbers of a tower of finite
  covers with amenable fundamental groups.
\newblock {\em Proc. Amer. Math. Soc.}, 139(2):421--434, 2011.

\bibitem[L{\"u}c94]{LuckApprox}
Wolfgang L{\"u}ck.
\newblock Approximating {$L^2$}-invariants by their finite-dimensional
  analogues.
\newblock {\em Geom. Funct. Anal.}, 4(4):455--481, 1994.

\bibitem[L{\"u}c16]{Luck2016survey}
Wolfgang L{\"u}ck.
\newblock Approximating {{\(L^2\)}}-invariants by their classical counterparts.
\newblock {\em EMS Surv. Math. Sci.}, 3(2):269--344, 2016.

\bibitem[Mal48]{Malcev1948}
Anatoli\u\i~Ivanovich Mal'cev.
\newblock On the embedding of group algebras in division algebras.
\newblock {\em Doklady Akad. Nauk SSSR (N.S.)}, 60:1499--1501, 1948.

\bibitem[Neu49]{Neumann1949}
B.~H. Neumann.
\newblock On ordered division rings.
\newblock {\em Trans. Amer. Math. Soc.}, 66:202--252, 1949.

\bibitem[OS21]{OkunSchreveTorsion}
Boris Okun and Kevin Schreve.
\newblock Torsion invariants of complexes of groups, 2021.
\newblock arXiv:2108.08892 [math.GR].

\bibitem[Sau16]{Sauer2016}
Roman Sauer.
\newblock Volume and homology growth of aspherical manifolds.
\newblock {\em Geom. Topol.}, 20(2):1035--1059, 2016.

\bibitem[Sch02]{SchickL2Int2002}
Thomas Schick.
\newblock Erratum: ``{I}ntegrality of {$L^2$}-{B}etti numbers''.
\newblock {\em Math. Ann.}, 322(2):421--422, 2002.

\bibitem[Sch14]{SchreveSpecialAtiyah}
Kevin Schreve.
\newblock The strong {A}tiyah conjecture for virtually cocompact special
  groups.
\newblock {\em Math. Ann.}, 359(3-4):629--636, 2014.

\bibitem[Sta15]{Stafa2015}
Mentor Stafa.
\newblock On the fundamental group of certain polyhedral products.
\newblock {\em J. Pure Appl. Algebra}, 219(6):2279--2299, 2015.

\end{thebibliography}

\end{document}